\newtheorem{Theo}{Theorem}
\newtheorem{Prop}{Proposition}
\newtheorem{Cor}{Corollary}
\theoremstyle{definition}
\newtheorem{Def}{Definition}
\newtheorem{Not}{Notation}
\newtheorem{Rem}{Remark}
\newcommand\RR{\mathbb{R}}
\newcommand\CC{\mathbb{C}}
\newcommand\NN{\mathbb{N}}
\title{  Higher order infinitesimal freeness\footnote{This work was partially supported by the {\emph Agence Nationale de la
Recherche} grant ANR-08-BLAN-0311-03.}}
\author{M. F\'evrier\thanks{Institut de Math\'ematiques de Toulouse, Equipe de Statistique et Probabilit\'es,  F-31062 Toulouse Cedex 09. E-mail: maxime.fevrier@math.univ-toulouse.fr }} 
\date{}
\begin{document}
\maketitle

\begin{abstract}
We define higher order infinitesimal noncommutative 
probability space and infinitesimal non-crossing cumulant functionals. 
In this framework, we generalize to higher order the notion of 
infinitesimal freeness, via a vanishing of mixed cumulants condition. 
We also introduce and study some non-crossing partitions 
related to this notion. Finally, as an application, we show 
how to compute the successive derivatives of the free convolution 
of two time-indexed families of distributions from their 
individual derivatives. 
\end{abstract}

\vspace{6pt}

\begin{center}
{\bf\large 1. Introduction}
\end{center}
\setcounter{section}{1}

Free probability theory was introduced by Voiculescu in the eighties 
with motivations from operator algebras \cite{vdn}, but many connections 
to other fields of mathematics like random matrices (see \cite{voi}) or combinatorics appeared. 
The combinatorial side of free probability, as noticed by Speicher \cite{spe}, 
is linked to the convolution on the lattices of non-crossing partitions, 
which have been first studied by Kreweras \cite{kre}. Biane proved in \cite{bia} 
that these lattices of non-crossing partitions can be embedded 
into the Cayley graphs of the symmetric groups, 
also known as the type A in the classification of finite reflection groups. 
Reiner introduced non-crossing partitions related to other types in this classification \cite{rei}. 
\\
In \cite{bgn}, the authors showed that it is possible to build a free probability theory 
of type B, by replacing the occurences of the symmetric groups and the non-crossing partitions 
of type A by their type B analogues, namely the hyperoctaedral groups and the non-crossing partitions 
of type B. In their work, a central role is played by the boxed convolution 
which is a combinatorial operation having a natural type B analogue and describing 
the multiplication of two freely independent noncommutative random variables. 
The specificity of the boxed convolution of type B led the authors to define 
a noncommutative probability space of type B as a system 
$(\mathcal{A},\varphi ,\mathcal{V},f,\Phi )$, where
$(\mathcal{A},\varphi )$ is a noncommutative probability space, 
$\mathcal{V}$ is a complex vector space, 
$f : \mathcal{V} \longrightarrow \CC$ is a linear functional,
$\Phi : \mathcal{A} \times \mathcal{V} \times \mathcal{A} \longrightarrow \mathcal{V}$ 
is a two-sided action of $\mathcal{A}$ on $\mathcal{V}$. 
A type B noncommutative random variable is therefore 
a couple $(a,\xi )\in \mathcal{A} \times \mathcal{V}$, 
its distribution is $\CC^2$-valued and the non-crossing cumulant functionals 
of type B introduced in \cite{bgn} are also with values in $\CC^2$. 
An important remark is that the first component of 
a non-crossing cumulant of type B 
in $(\mathcal{A},\varphi ,\mathcal{V},f,\Phi )$ is simply  
a non-crossing cumulant of type A in $(\mathcal{A},\varphi )$. 
It follows that the notion of freeness of type B 
for $(\mathcal{A}_1, \mathcal{V}_1) , \ldots , (\mathcal{A}_m, \mathcal{V}_m)$ 
in $(\mathcal{A},\varphi ,\mathcal{V},f,\Phi )$, 
defined in \cite{bgn} in terms of moments 
to ensure that the vanishing of mixed cumulants of type B holds, 
implies the freeness of $\mathcal{A}_1, \ldots , \mathcal{A}_m$ 
in $(\mathcal{A},\varphi )$. The free additive convolution of type B, denoted by $\boxplus ^{(B)}$, 
which describes the distribution of the sum of two 
type B noncommutative random variables that are free of type B, 
is an operation on the set of couples $(\mu ,\mu ')$ of linear functionals 
on $\CC [X]$ satisfying $\mu (1)=1$ and $\mu '(1)=0$. 
Later, Popa stated in \cite{pop} type B versions of usual limit theorems 
and defined a S-transform for noncommutative random variables of type B.
\\
Recently, the analytic aspects of free probability theory of type B were investigated in \cite{bs} ; 
in particular, the authors outlined an interesting application of the free probability of type B 
that they called infinitesimal freeness : defining (when they exist) 
the zeroth and first derivatives at $0$ 
of a time-indexed family of distributions $(\mu _t)_{t>0}$
as the couple of distributions $(\mu ^{(0)}, \mu ^{(1)})$ defined by 
$$\mu ^{(0)}=\lim_{t\rightarrow 0} \mu _t$$
and
$$\mu ^{(1)}=\frac{d}{dt}_{|t=0}\mu _t=\lim_{t\rightarrow 0} \frac{1}{t}(\mu _t-\mu ^{(0)}),$$
they prove that, given two such time-indexed families of distributions 
$(\mu _t)_{t>0}$ and $(\nu _t)_{t>0}$, the zeroth and first derivatives at $0$ 
of $\mu _t \boxplus \nu _t$, denoted by $((\mu \boxplus \nu )^{(0)} , (\mu \boxplus \nu )^{(1)})$, 
satisfy : 
$$((\mu \boxplus \nu )^{(0)} , (\mu \boxplus \nu )^{(1)})=
(\mu ^{(0)}, \mu ^{(1)})\boxplus ^{(B)}(\nu ^{(0)}, \nu ^{(1)}).$$
\\
Following this insight, a new approach of free probability of type B 
was developed in \cite{fn}, named infinitesimal freeness. 
The equivalent structures considered there, simplifying and generalizing 
the noncommutative probability space of type B from \cite{bgn}, are the 
infinitesimal noncommutative probability space $(\mathcal {A},\varphi ,\varphi ')$ 
consisting in a noncommutative probability space $(\mathcal {A},\varphi )$ 
to which has been added another linear functional $\varphi '$ on $\mathcal {A}$ 
satisfying $\varphi '(1_\mathcal {A})=0$, and 
the scarce $\mathbb{G}$-noncommutative probability space $(\mathcal {A},\tilde \varphi )$, 
where $\tilde \varphi $ is a linear map 
which consolidates the two functionals $\varphi ,\varphi '$ 
in a single one from $\mathcal {A}$ 
into a two-dimensional Grassman algebra $\mathbb{G}$ generated 
by an element $\varepsilon $ which satisfies $\varepsilon ^2 =0$. 
A scarce $\mathbb{G}$-noncommutative probability space appears 
in the framework of a noncommutative probability space of type B 
$(\mathcal{A},\varphi ,\mathcal{V},f,\Phi )$ when one considers the link-algebra 
$\mathcal{A}\times \mathcal{V}$ together with the map $(\varphi , f)$. 
Infinitesimal freeness of unital subalgebras 
$\mathcal{A}_1, \ldots , \mathcal{A}_m$ of 
an infinitesimal noncommutative probability space $(\mathcal {A},\varphi ,\varphi ')$ 
is defined as the rewriting of the condition defining freeness of type B in 
a more general context. More precisely, $\mathcal{A}_1, \ldots , \mathcal{A}_m$ 
are infinitesimally free if whenever $i_1, \ldots , i_n \in \{ 1, \ldots , k \}$ are such that 
$i_1 \neq i_2, i_2 \neq i_3 , \ldots , i_{n-1} \neq i_n$, 
and $a_1 \in \mathcal{A}_{i_1}, \ldots , a_n \in \mathcal{A}_{i_n}$ 
are such that $\varphi (a_1) = \cdots = \varphi (a_n) = 0$, 
then $\varphi ( a_1 \cdots a_n ) = 0$ and
$$\varphi '( a_1 \cdots a_n ) = 
\left\{  \begin{array}{l}
\varphi (a_1 \, a_n) \varphi (a_2 \, a_{n-1}) \cdots 
       \varphi ( a_{(n-1)/2} \, a_{(n+3)/2} ) \cdot
       \varphi '( a_{(n+1)/2} ),                                \\
\mbox{$\ \ $} \ \ \mbox{ if $n$ is odd and $i_1 = i_n , 
      i_2 = i_{n-1}, \ldots , i_{(n-1)/2} = i_{(n+3)/2}$, }     \\
0, \ \mbox{ otherwise.}
\end{array}  \right.$$
It is clear from this definition that infinitesimally free unital 
subalgebras of an infinitesimal noncommutative probability space 
$(\mathcal {A},\varphi ,\varphi ')$ are in particular free in 
$(\mathcal {A},\varphi )$. A converse is proved in \cite{fn} : given 
free unital subalgebras $\mathcal{A}_1, \ldots , \mathcal{A}_m$ 
of a noncommutative probability space $(\mathcal {A},\varphi )$, 
$\mathcal{A}_1, \ldots , \mathcal{A}_m$ are infinitesimally free in 
the infinitesimal noncommutative probability space 
$(\mathcal {A},\varphi ,\varphi ')$, for instance when we set 
$\varphi ':=\varphi \circ D$, 
where $D : \mathcal {A}\longrightarrow \mathcal {A}$ is a derivation 
such that $\forall 1\leq i\leq m, D(\mathcal{A}_i)\subseteq \mathcal{A}_i$.
Moreover, a method is presented to obtain analogues in the framework 
of interest of an infinitesimal noncommutative probability space 
$(\mathcal {A},\varphi ,\varphi ')$ for results already established in 
usual free probability. This method is roughly to work in $(\mathcal {A},\tilde \varphi )$, 
where the computations are easy in the sense that the combinatorics is 
exactly the same as in a usual noncommutative probability space, and 
to take advantage of the equivalence between the structures 
$(\mathcal {A},\tilde \varphi )$ and $(\mathcal {A},\varphi ,\varphi ')$. 
This method is applied in \cite{fn} to find the right notion of infinitesimal 
non-crossing cumulant functional, and to compute the formulas for 
alternating products of infinitesimally free noncommutative random variables. 
These formulas make the non-crossing partitions of type B appear, 
as a reminder of the type B origin of infinitesimal freeness. 
The present work is in the lineage of \cite{fn}. 
\\
With the motivation to obtain higher order derivatives at $0$ 
of $\mu _t \boxplus \nu _t$ from those of $\mu _t$ and $\nu _t$, 
we generalize indeed to higher order the notion of 
infinitesimal noncommutative probability space from \cite{fn}, 
by adding to the noncommutative probability space $(\mathcal {A},\varphi ^{(0)})$ 
a certain number $k$ of other linear functionals 
$(\varphi ^{(i)})_{1\leq i\leq k}$ on $\mathcal {A}$ 
satisfying $\varphi ^{(i)}(1_\mathcal {A})=0$. 
Following the same idea as \cite{fn}, some formulas, the infinitesimal analogue of the 
free moment-cumulant formula for instance, 
will be simplified in the equivalent scarce $\mathcal{C}_k$ structure 
$(\mathcal {A},\tilde \varphi )$, 
where the $k+1$ linear functionals $(\varphi ^{(i)})_{0\leq i\leq k}$ 
are consolidated in a unique linear map $\tilde \varphi $, but with values in a certain 
$(k+1)$-dimensional algebra $\mathcal{C}_k$. The main benefit coming from this trick 
is that the formulas in $(\mathcal {A},\tilde \varphi )$ 
are the same as in usual free probability, 
with the only difference that they take place in the $(k+1)$-dimensional 
algebra $\mathcal{C}_k$ instead of the field of complex numbers. 
In what follows, we will continuously switch from the infinitesimal framework 
$(\mathcal{A} , (\varphi ^{(i)})_{0\leq i\leq k})$ 
which is the one of interest to the scarce $\mathcal{C}_k$-structure 
$(\mathcal{A} , \tilde \varphi )$ which 
is handy because the computations are easier in it.   
\\
As noticed above, in infinitesimal freeness from \cite{fn}, some formulas involving 
$\varphi '$ also involve the lattices of non-crossing partitions 
of type B, due to the link of infinitesimal freeness 
with free probability of type B pointed out in \cite{fn}. 
In higher order infinitesimal freeness, new non-crossing partitions 
appear in the formulas involving $\varphi ^{(k)}$. 
These so-called non-crossing partitions of type $k$, 
generalizing both non-crossing partitions of type A (corresponding to the case $k=0$) 
and type B (corresponding to the case $k=1$), are introduced and studied in Section 6.
\\
Our approach is in a sense the opposite of the approach in \cite{bgn}. 
Indeed, in \cite{bgn}, the authors substitute the symmetric groups 
by the hyperoctahedral groups, and by the way non-crossing partitions 
of type A by their type B analogues and thus they obtain 
the noncommutative probability space of type B. 
In the present work, we directly substitute the 
noncommutative probability space by the $k$-th order 
infinitesimal noncommutative probability space, and we look for the 
non-crossing partitions of type $k$ appearing this way. 
\\
Following this introduction, the paper is divided in seven 
other sections. In Section 2, we introduce 
the two equivalent notions of infinitesimal noncommutative probability space of order $k$ 
and of scarce $\mathcal{C}_k$-noncommutative probability space 
and discuss their relations with other structures. In Section 3, 
we introduce infinitesimal non-crossing 
cumulant functionals of order $k$, and define infinitesimal freeness of 
order $k$ by a condition of vanishing mixed cumulants. Section 4 
is devoted to the addition and multiplication of infinitesimally free variables. 
The formula expressing the 
infinitesimal cumulants of the product of two infinitesimally free 
noncommutative random variables may be written 
as a sum on certain non-crossing partitions generalizing the 
non-crossing partitions of type B reviewed in Section 5. 
These special non-crossing partitions, called non-crossing partitions 
of type $k$, and the boxed convolution operation associated to them 
are introduced and studied in Sections 6 and 7. Finally, we give in Section 8 
an important application of higher order infitesimal freeness : 
a recipe for computing the higher order derivatives of the free convolutions 
of two distributions. 

\begin{center}
{\bf\large 2. Infinitesimal noncommutative probability space of order $k$}
\end{center}

Throughout the paper, the integer $k\in \NN$ is fixed. 
In this section, we introduce the two equivalent structures 
of infinitesimal noncommutative probability space and of 
scarce $\mathcal{C}_k$ noncommutative probability space 
and we discuss their relations to previously defined structures.

\begin{center}
{\bf 2.1 Infinitesimal noncommutative probability space of order $k$}
\end{center}

The object of this subsection is to introduce the structure which is the framework 
for our notion of infinitesimal freeness of order $k$, 
namely the {\em infinitesimal noncommutative probability space of order $k$}. 

\begin{Def} \label{incps}
We call infinitesimal noncommutative probability space of order $k$ 
a structure $(\mathcal{A}, (\varphi ^{(i)})_{0\leq i\leq k})$ where $\mathcal{A}$ is a 
unital algebra over $\CC$, $\varphi ^{(0)}:\mathcal{A}\longrightarrow \CC$ 
is a linear map with $\varphi ^{(0)}(1_\mathcal{A})=1$, and 
$\varphi ^{(i)}:\mathcal{A}\longrightarrow \CC$, $1\leq i\leq k$, 
are linear maps with $\varphi ^{(i)}(1_\mathcal{A})=0$.
\end{Def}

\begin{Rem} 
The notion of infinitesimal noncommutative probability space of order $1$ coincides 
with the notion of infinitesimal noncommutative probability space introduced 
in \cite{fn}. The structure defined above is therefore a generalization of 
this latter object, and the use of the adjective infinitesimal is justified. 
\end{Rem}

An element $a\in (\mathcal{A}, (\varphi ^{(i)})_{0\leq i\leq k})$ 
of an infinitesimal noncommutative probability space of order $k$ 
is called an {\em infinitesimal noncommutative random variable of order $k$}. 
The {\em infinitesimal distribution of order $k$} of a $n$-tuple 
$(a_1, \ldots ,a_n)\in \mathcal{A} ^n$ of infinitesimal 
noncommutative random variables of order $k$ is the $(k+1)$-tuple 
$(\mu ^{(i)})_{0\leq i\leq k}$ of linear functionals on 
$\CC \langle X_1,\ldots ,X_n \rangle$ 
defined by : $$\mu ^{(i)}(P(X_1, \ldots ,X_n)) : =\varphi ^{(i)}(P(a_1, \ldots ,a_n)).$$ 
The range of infinitesimal distributions is the set of {\em infinitesimal laws of order $k$}, 
introduced below. 

\begin{Def} 
An infinitesimal law (of order $k$) on $n$ variables is a $(k+1)$-tuple of linear functionals 
$(\mu ^{(i)})_{0\leq i\leq k}$, where $\mu ^{(i)} : \CC \langle X_1,\ldots ,X_n \rangle \rightarrow \CC$ 
is defined on the algebra of noncommutative polynomials and satisfies $\mu ^{(i)}(1)=\delta _i^0$.
\end{Def}

For some purposes, it is handy to consider, instead of $k+1$ linear functionals 
as in Definition \ref{incps}, an equivalent unique linear map 
with values in a $(k+1)$-dimensional algebra. 
The relevant algebra, denoted by $\mathcal{C}_k$, is described below. 

\begin{center}
{\bf 2.2 The algebra $\mathcal{C}_k$}
\end{center}

In \cite{fn}, the two linear maps $\varphi $ and $\varphi '$ 
of an infinitesimal noncommutative probability space 
$(\mathcal{A}, \varphi , \varphi ')$ are 
consolidated in a single linear map $\tilde \varphi $ on $\mathcal{A}$
with values in the two-dimensional Grassman algebra $\mathbb{G}$ generated 
by an element $\varepsilon $ which satisfies $\varepsilon ^2 =0$ : 
$$\mathbb{G} = \{ \alpha + \varepsilon \beta \mid \alpha , \beta \in \CC \}.$$
This algebra has a quite natural $(k+1)$-dimensional generalization 
introduced below.

\begin{Def} 
Let $\mathcal{C}_k$ denote the $(k+1)$-dimensional complex algebra $\CC^{k+1}$ 
with usual vector space structure and multiplication given by the following rule: 
if $\alpha = (\alpha ^{(0)},\ldots ,\alpha ^{(k)})\in \mathcal{C}_k$ 
and $\beta = (\beta ^{(0)},\ldots ,\beta ^{(k)})\in \mathcal{C}_k$, 
then $$\alpha \cdot \beta =(\gamma ^{(0)},\ldots ,\gamma ^{(k)})$$ is defined by 
\begin{equation} \label{prod}
\gamma ^{(i)} : =\sum_{j=0}^i C_i^j \alpha ^{(j)}\beta ^{(i-j)}.
\end{equation}
\end{Def}

The algebra $\mathcal{C}_k$ is a unital complex commutative algebra. 
Its unit is $1_{\mathcal{C}_k}=(1,0,\ldots ,0)$. 
An element is invertible in the algebra $\mathcal{C}_k$ if and only if its first coordinate is non-zero.\\
The analogy between formula \eqref{prod} defining the product in $\mathcal{C}_k$ and the well-known 
Leibniz rule giving the recipe for computing the derivatives of the product of two smooth functions 
makes it easy to establish the formula for the product $\beta =\alpha _1 \cdots  \alpha _n$ 
of $n$ elements $\alpha _1,\ldots ,\alpha _n\in \mathcal{C}_k$. Precisely, 
if $\alpha _j= (\alpha _j^{(0)},\ldots ,\alpha _j^{(k)})$ and 
$\beta = (\beta ^{(0)},\ldots ,\beta ^{(k)})$, one has : 
$$\beta ^{(i)}=\sum_{\lambda \in \Lambda _{n,i}} C_i^{\lambda _1,\ldots ,\lambda _n} 
\prod_{j=1}^{n} \alpha _j^{(\lambda _j)},$$
where 
$$C_i^{\lambda _1,\ldots ,\lambda _n}=\frac{i!}{\lambda _1!\cdots \lambda _n!}$$
and
\begin{equation} \label{lambdaset}
\Lambda _{n,i}:=\{\lambda =(\lambda _1,\ldots ,\lambda _n)\in \NN^{n}\mid \sum_{j=1}^{n}\lambda _j=i\}.
\end{equation}
\\
There is an alternative description of the algebra $\mathcal{C}_k$ : 
it may be identified with the algebra of $(k+1)$-by-$(k+1)$ 
upper triangular Toeplitz matrices (with usual matricial operations) as follows : 
$$(\alpha^{(0)},\ldots ,\alpha ^{(k)})\simeq \left(\begin{array}{ccccc}\alpha ^{(0)}&\alpha ^{(1)}&\ldots &\frac{\alpha ^{(k-1)}}{(k-1)!}&\frac{\alpha ^{(k)}}{k!}\\
0&\alpha ^{(0)}&\ldots &\ldots &\frac{\alpha ^{(k-1)}}{(k-1)!}\\
\ldots &\ldots &\ldots &\ldots &\ldots \\
\ldots &\ldots &\ldots &\alpha ^{(0)}&\alpha ^{(1)}\\
0&0&\ldots &\ldots &\alpha ^{(0)}\\ 
\end{array} \right).$$
Consider $$\varepsilon :=\left(\begin{array}{ccccc}0&1&\ldots &0&0\\
0&0&\ldots &\ldots &0\\
\ldots &\ldots &\ldots &\ldots &\ldots \\
\ldots &\ldots &\ldots &0&1\\
0&0&\ldots &\ldots &0\\ 
\end{array} \right).$$
It is easy to compute the values of $\varepsilon ^i$ for $0\leq i\leq k+1$ ; 
in particular $\varepsilon ^{k+1}=0_{\mathcal{C}_k}$ and any 
element $\alpha = (\alpha ^{(0)},\ldots ,\alpha ^{(k)})\in \mathcal{C}_k$ 
may be uniquely decomposed 
\begin{equation} 
\alpha = \sum_{i=0}^{k} \alpha ^{(i)} \frac{\epsilon ^i}{i!}.
\end{equation}
The family $(\frac{\varepsilon ^i}{i!} , 0\leq i\leq k)$ is thus a linear basis 
of $\mathcal{C}_k$, to which we will refer as the canonical basis of $\mathcal{C}_k$. 
In particular, $\mathcal{C}_k\simeq \CC[\varepsilon ]=\CC_k[\varepsilon ]\simeq \CC[X]/(X^{k+1})$.
\\
In the definition of a usual noncommutative probability space, if one asks for the state 
to be $\mathcal{C}_k$-valued, one obtains a slightly different structure, introduced 
in the next section. 

\begin{center}
{\bf 2.3 Scarce $\mathcal{C}_k$-noncommutative probability space}
\end{center}

\begin{Def} 
By {\em scarce $\mathcal{C}_k$-noncommutative probability space}, 
we mean a couple $( \mathcal{A} , \tilde \varphi )$, 
where $\mathcal{A}$ is a unital algebra over $\CC$ and 
$\tilde \varphi : \mathcal{A} \to \mathcal{C}_k$ is a linear map
satisfying $\tilde \varphi (1_{\mathcal{A}}) = 1_{\mathcal{C}_k}$.
\end{Def}

\begin{Rem} 
The notion of scarce noncommutative probability space 
was introduced in \cite{oan}, but only the particular case 
of scarce $\mathbb{G}$-noncommutative probability space 
was considered there. This same structure has been studied later 
in \cite{fn} in connection with infinitesimal noncommutative probability space 
and free probability of type B. 
\end{Rem}

\begin{Rem} \label{association}
To any infinitesimal noncommutative probability space of order $k$ 
$(\mathcal{A}, (\varphi ^{(i)})_{0\leq i\leq k})$, we may associate 
a natural scarce $\mathcal{C}_k$-noncommutative probability space 
$( \mathcal{A} , \tilde \varphi )$, by putting 
\begin{equation} \label{decomp}
\tilde \varphi :=\sum_{i=0}^k \varphi ^{(i)} \frac{\varepsilon ^i}{i!}
\end{equation}
Reciprocally, given a scarce $\mathcal{C}_k$-noncommutative 
probability space $( \mathcal{A} , \tilde \varphi )$, 
the linear decomposition of $\tilde \varphi $ in the canonical basis 
of $\mathcal{C}_k$ (see equation \eqref{decomp}) gives rise to 
$k+1$ linear functionals $(\varphi ^{(i)})_{0\leq i\leq k}$, and 
consequently to an infinitesimal noncommutative probability space of order $k$ : 
$(\mathcal{A}, (\varphi ^{(i)})_{0\leq i\leq k})$.
\\
The equivalence between the infinitesimal noncommutative probability space of order $k$ 
$(\mathcal{A}, (\varphi ^{(i)})_{0\leq i\leq k})$ and its associated 
scarce $\mathcal{C}_k$-noncommutative probability space 
$( \mathcal{A} , \tilde \varphi )$ is fundamental in what follows. 
Indeed, we will continuously switch from one structure to the other, 
according to the principle that our interest is in the infinitesimal 
structure whereas the computations are easier in the scarce $\mathcal{C}_k$ 
structure, in the sense that they mimetize those from usual free probability. 
\end{Rem}

An element $a$ of a scarce $\mathcal{C}_k$-noncommutative 
probability space $(\mathcal{A}, \tilde \varphi )$ 
is called a {\em $\mathcal{C}_k$-noncommutative random variable}. 
We associate to such an $a\in \mathcal{A}$ the sequence of its 
{\em $\mathcal{C}_k$-valued moments} $(\tilde \varphi (a^n))_{n\in \NN^\ast }$. 
We call {\em $\mathcal{C}_k$-valued distribution} of $a$ the whole sequence of its moments, 
or equivalently, the linear map from $\CC[X]$ into $\mathcal{C}_k$ 
which maps any polynomial $P$ to $\tilde \varphi (P(a))$. 
One may find easier to collect all the 
$\mathcal{C}_k$-valued moments in a formal power series, as follows : 

\begin{Def} 
Let $\mathcal{C}$ be a unital commutative algebra over $\CC$.
We denote by $\Theta _{\mathcal{C}}^{(A)}$ the set of power series of the form 
$$f(z)=\sum_{n=1}^\infty  \alpha _nz^n,$$
where the $\alpha _n$'s are elements of $\mathcal{C}$.
\end{Def}

\begin{Def} 
Let $( \mathcal{A} , \tilde \varphi )$ be a scarce $\mathcal{C}_k$-noncommutative probability space.
The {\em $\mathcal{C}_k$-valued moment series} of $a\in \mathcal{A} $ is 
the power series $\tilde M_a\in \Theta _{\mathcal{C}_k}^{(A)}$ defined as follows:
$$\tilde M_a(z):=\sum_{n=1}^{\infty } \tilde \varphi (a^n) z^n.$$
\end{Def}

The notion of $\mathcal{C}_k$-valued distribution 
is easily generalized to $n$-tuples of variables : 

\begin{Def} 
The {\em $\mathcal{C}_k$-valued distribution} of a $n$-tuple $(a_1,\ldots ,a_n)\in \mathcal{A} ^n$ 
of $\mathcal{C}_k$-noncommutative random variables in 
a scarce $\mathcal{C}_k$-noncommutative probability space $(\mathcal{A}, \tilde \varphi )$ 
is the linear map $\tilde \mu _{(a_1,\ldots ,a_n)} : \CC \langle X_1,\ldots ,X_n \rangle \rightarrow \mathcal{C}_k$ 
defined by $$\tilde \mu _{(a_1,\ldots ,a_n)}(P(X_1,\ldots ,X_n)):=\tilde \varphi (P(a_1,\ldots ,a_n)).$$
\end{Def}

As mentioned in \cite{fn}, scarce $\mathbb{G}$-noncommutative probability space 
and infinitesimal noncommutative probability space provide a nice framework to 
do free probability of type B. The equivalent structures defined above 
are therefore the natural setting for generalizing free probability of type B. 
There is another structure linked to free probability of type B 
that one may find interesting to generalize here : the noncommutative probability space 
of type B, proposed in the original work on free probability of type B \cite{bgn}. 
Its natural generalization is the {\em noncommutative probability space of type $k$} : 

\begin{Def} 
By a noncommutative probability space of type $k$ we understand a system 
$(\mathcal{V}^{(0)},f^{(0)},\ldots ,\mathcal{V}^{(k)},f^{(k)},(\Phi _{i,j})_{0\leq i,j\leq k})$, where
$(\mathcal{V}^{(0)},f^{(0)})$ is a noncommutative probability space of type A,
$\mathcal{V}^{(i)}$, $1\leq i\leq k$, are complex vector spaces,
$f^{(i)}:\mathcal{V}^{(i)}\longrightarrow \CC$, $1\leq i\leq k$, are linear maps,
$\Phi _{i,j}:\mathcal{V}^{(i)}\times \mathcal{V}^{(j)}\longrightarrow \mathcal{V}^{(i+j)}$,$0\leq i,j\leq k$, 
are bilinear maps satisfying $$\Phi _{h+i,j}(\Phi _{h,i}(x,y),z)=\Phi _{h,i+j}(x,\Phi _{i,j}(y,z)),$$
$\forall h,i,j\in \NN, \forall x\in \mathcal{V}^{(h)}, \forall y\in \mathcal{V}^{(i)}, \forall z\in \mathcal{V}^{(j)}$.
\end{Def}

To make the preceding definition work, we put $\mathcal{V}^{(i)}=\{0\}$, when $i\geq k+1$. 
The following fact noticed in \cite{fn} still holds : 
noncommutative probability spaces of type $k$ are 
particular cases of scarce $\mathcal{C}_k$-noncommutative probability spaces. 
Indeed, given a noncommutative probability space of type $k$ 
$(\mathcal{V}^{(0)},f^{(0)},\ldots ,\mathcal{V}^{(k)},f^{(k)},(\Phi _{i,j})_{0\leq i,j\leq k})$, 
the direct product $\prod_{i=0}^{k} \mathcal{V}^{(i)}$ 
can be endowed with a complex unital algebra structure, 
via the maps $(\Phi _{i,j})_{0\leq i,j\leq k}$. 
This algebra, together with the linear map 
$\tilde \varphi (x_0,\ldots ,x_{k}):=(f^{(0)}(x_0),\ldots ,f^{(k)}(x_{k}))$, forms a scarce
$\mathcal{C}_k$-noncommutative probability space.
\\

There are natural equivalent notions of freeness on the structures introduced above, 
generalizing both infinitesimal freeness from \cite{bs} and \cite{fn} and 
freeness of type B from \cite{bgn}. In \cite{fn}, 
infinitesimal freeness in $(\mathcal{A},\varphi ,\varphi ')$
is defined by two conditions on the linear functionals $\varphi ,\varphi '$ ; 
its generalization to an infinitesimal noncommutative probability space of order $k$ denoted by 
$(\mathcal{A}, (\varphi ^{(i)})_{0\leq i\leq k})$ would require $k+1$ conditions on 
the linear functionals $(\varphi ^{(i)})_{0\leq i\leq k}$. 
Infinitesimal freeness from \cite{fn} being also equivalent to the vanishing of 
the infinitesimal non-crossing cumulants, we adopt this approach and 
define the infinitesimal freeness of order $k$ by the vanishing of some multilinear 
functionals, called infinitesimal non-crossing cumulant functionals of order $k$ 
and introduced in the next section. 

\begin{center} 
{\bf\large 3. Infinitesimal non-crossing cumulants of order $k$}
\end{center}

We begin this section by reviewing some background on non-crossing partitions.

\begin{center}
{\bf 3.1 Miscellaneous facts on non-crossing partitions of type A}
\end{center}

A partition $p$ of a finite set $X$ is a family of 
disjoint non-empty subsets of $X$, called the blocks of $p$, 
whose reunion is $X$. 
The set of blocks of a partition $p$ of $X$ will be denoted 
throughout these notes by $\mbox{bl}(p)$ ; its cardinal by $| p |$.\\
For $a$ and $b$ in $X$, we write $a\sim _p b$ and say that $a$ and $b$ are linked 
(in $p$) to denote that $a$ and $b$ are in the same block of the partition $p$ of $X$.
The set of partitions of a finite set $X$ together with the reverse refinement order 
($p\preceq q$ if every block of $p$ is contained in a block of $q$) is a lattice.\\
Now suppose $(X,\leq )$ is a totally ordered set.\\
A partition $p$ of $X$ is called non-crossing if, whenever you have $a<b<c<d$ in $X$ 
such that $a\sim _p c$ and $b\sim _p d$, then $a\sim _p b$.\\
The set $(NC^{(A)}(X),\preceq )$ of non-crossing partitions of $X$ 
together with the reverse refinement order is itself a lattice.
Its maximal element $1_X$ has $X$ as its only block ; its minimal element $0_X$ has every singleton as a block.\\
When $X=[m]:=\{1<\ldots <m\}$, we write $NC^{(A)}(m)$ instead of $NC^{(A)}([m])$.\\
A nice way to represent a non-crossing partition $p\in NC^{(A)}(m)$ is to view $1,\ldots ,m$ 
as equidistant clockwisely ordered points on a circle, 
and to draw for each block of $p$ the convex polygone whose vertices are the elements of this block. 
It is a necessary and sufficient condition for a partition to be non-crossing that 
the polygones built this way do not intersect.\\
Biane found in \cite{bia} a bijection between the set of non-crossing partitions of $[m]$ 
and the set of points lying on a geodesic in the Cayley graph of the symmetric group $S_m$ 
with generators the set of all transpositions. 
This bijection $t$ associates to any non-crossing partition $p\in NC^{(A)}(m)$ the permutation 
$t(p)\in S_m$ whose restriction to each block $V$ of $p$ is the trace of the cycle 
$(1,\ldots ,m)\in S_m$ on $V$. For $a\in [ m ] $, $t(p)(a)$ is called the neighbour of $a$ in $p$. 
Geometrically, it is the first point linked to $a$ 
that one meets when one goes clockwisely around the circle, starting from $a$.\\ 
Let us recall that the Kreweras complementation map, denoted by $\mbox{Kr}$, 
is the anti-isomorphism of the lattice $NC^{(A)}(m)$ of non-crossing partitions of $[m]$ 
introduced by Kreweras in \cite{kre} and defined in the following way : consider a copy 
$$\overline {[m]}:=\{\overline{1}<\ldots <\overline{m}\}$$ of $[m]$
and order the elements of $[m]\cup \overline{[m]}$ as follows : 
$$\{1<\overline{1}<\ldots <m<\overline{m}\}.$$
Given $p$ a non-crossing partition of $[m]$, $\mbox{Kr}(p)$ is the biggest (for the reverse refinement order) 
partition of  $\overline{[m]}$ such that $p \cup \mbox{Kr}(p)$ is a non-crossing partition of 
$[m]\cup \overline{[m]}$. See \cite{ns97} for a nice geometric construction of the Kreweras complement.

\begin{Rem} 
On $[m]\cup \overline{[m]}$, we could have considered the alternative order 
$$\{\overline 1<1<\overline 2<\ldots <\overline m<m\}.$$
This would have led to another anti-isomorphism of $NC^{(A)}(m)$, 
also called Kreweras complementation map and denoted $\mbox{Kr}'$, which turns out to be the inverse of $\mbox{Kr}$.
\end{Rem}

There is an important equality (see \cite{kre}) verified by the number of blocks of 
the Kreweras complement of a non-crossing partition: 

\begin{equation} \label{nbblocks}
| p |+| \mbox{Kr}(p) |=m+1, \forall p\in NC^{(A)}(m).
\end{equation}

Notice that, for $p\in NC^{(A)}(m)$, $\mbox{Kr}^2(p)$ can be easily described 
in the geometric representation given above : 
$\mbox{Kr}^2(p)$ is the anti-clockwise rotation of $p$ with angle $\frac{2\pi}{m}$.

We conclude this subsection by the introduction of a total order 
on the blocks of a fixed non-crossing partition $p$ of $[m]$.  

\begin{Def} 
Let $p \in NC^{(A)}(m)$, and $V,W\in \mbox{bl}(p)$.\\
$1^o$ $V$ is said to be nested in $W$ if $\min W<\min V\leq \max V<\max W$.\\
$2^o$ $V$ is said to be on the left of $W$ if $\max V<\min W$.\\
$3^o$ $V\sqsubset W \Leftrightarrow V$ is nested in $W$ or $V$ is on the left of $W$. 
\end{Def}

The proof of the next proposition is trivial and left to the reader.

\begin{Prop} 
$\sqsubset $ is a total order on $\mbox{bl}(p)$.
\end{Prop}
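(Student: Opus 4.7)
The plan is to verify the three defining properties of a strict total order: irreflexivity, trichotomy, and transitivity. Irreflexivity is immediate since neither $\min V < \min V$ nor $\max V < \min V$ holds, so $V \not\sqsubset V$, and asymmetry will follow in passing from the trichotomy argument.

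For trichotomy, I would take two distinct blocks $V, W \in \mbox{bl}(p)$. Since blocks of a partition are nonempty and pairwise disjoint, I may assume $\min V < \min W$ after swapping if needed. I then split on the sign of $\max V - \min W$. If $\max V < \min W$ then $V$ is on the left of $W$. Otherwise $\max V > \min W$ (equality is ruled out by disjointness), and I claim $\max V > \max W$, so that $W$ is nested in $V$. Indeed, if instead $\max V < \max W$, the four points $a := \min V$, $b := \min W$, $c := \max V$, $d := \max W$ satisfy $a < b < c < d$ with $a \sim_p c$ and $b \sim_p d$ while $a \not\sim_p b$ (since $V$ and $W$ are disjoint blocks), contradicting the non-crossing property of $p$.

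For transitivity, I would take $V \sqsubset W \sqsubset Z$ and split into four subcases according to which clauses of $\sqsubset$ are realized. The subcases (left, left), (nested, left), and (nested, nested) are settled by direct chains of inequalities: in the first two one gets $\max V < \min W \leq \max W < \min Z$ or $\max V < \max W < \min Z$, hence $V$ on the left of $Z$, and in the third one obtains $\min Z < \min W < \min V \leq \max V < \max W < \max Z$, hence $V$ nested in $Z$. The remaining subcase (left, nested) again reduces to the non-crossing argument of the trichotomy step: if $\max V < \min Z$ then $V$ is on the left of $Z$, and otherwise comparing $\min V$ with $\min Z$ and invoking non-crossing on four appropriately chosen points forces $V$ to be nested in $Z$. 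In every case $V \sqsubset Z$. The only real content, and hence the only mild obstacle, is the use of the non-crossing property in the trichotomy step and in the (left, nested) subcase of transitivity; everything else is routine manipulation of inequalities on mins and maxes of disjoint subsets of $[m]$.
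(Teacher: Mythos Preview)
Your proof is correct. The paper does not actually prove this proposition: it declares the proof ``trivial and left to the reader,'' so your explicit verification of irreflexivity, trichotomy (via the non-crossing condition), and transitivity is exactly the routine argument the author has in mind.
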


If $p\in NC^{(A)}(m)$, we have seen that $p\cup \mbox{Kr}(p)$ is 
a non-crossing partition of $[m]\cup \overline{[m]}$ in $m+1$ blocks.
These blocks will be listed in two different ways.\\ 
The first way is to list them all together in the increasing order $\sqsubset $ : 
we will write $\mbox{Mix}(p,i)$ for the $i$-th block of $p\cup \mbox{Kr}(p)$ 
in the increasing order $\sqsubset $, for $1\leq i\leq m+1$.\\
For some purposes, it is nice to list separately the blocks of $p$ and of $\mbox{Kr}(p)$, 
and we will write $\mbox{Sep}(p,i)$ to denote the $i$-th block of $p$ in the increasing order $\sqsubset $ 
if $1\leq i\leq | p |$ and to denote the $(i-| p |)$-th block of $\mbox{Kr}(p)$ 
in the increasing order $\sqsubset $ if $| p |+1\leq i\leq m+1$.\\
It is interesting to look at the first blocks in the two resulting lists : 
$\mbox{Mix}(p,1)$ is a singleton in $[m]\cup \overline{[m]}$, $\mbox{Sep}(p,1)$ is an interval in $[m]$. 
In particular, we can deduce the well-known fact that a non-crossing partition 
always owns an interval-block.

\begin{center}
{\bf 3.2 $\mathcal{C}_k$-non-crossing cumulant functionals}
\end{center}

In this subsection, we define non-crossing cumulant functionals in the framework of a 
scarce $\mathcal{C}_k$-noncommutative probability space by the free moment-cumulant 
formula from usual free probability, with the only difference that the computations take place 
in the algebra $\mathcal{C}_k$ instead of the field of complex numbers $\CC$. 
The following notations are commonly used 
in the combinatorial theory of free probability. 

\begin{Not} 
Let $(a_1,\ldots ,a_n)\in \mathcal{A}^n$, and let $V=\{v_1<\ldots <v_m\}\subseteq [n]$, then we denote 
$$(a_1,\ldots ,a_n) \mid V:=(a_{v_1},\ldots ,a_{v_m})\in \mathcal{A}^m.$$
For a family of multilinear maps $(r_n: \mathcal{A}^n \rightarrow \mathcal{C}_k)_{n=1}^\infty $, 
we define for any $n\in \NN$ and any $\pi \in NC^{(A)}(n)$ the $n$-linear functional 
$r_\pi : \mathcal{A}^n \rightarrow \mathcal{C}_k$ by 
$$r_\pi (a_1,\ldots ,a_n):=\prod _{V\in \pi } r_{|V|}((a_1,\ldots ,a_n) \mid V).$$
\end{Not}

\begin{Def} 
Let $( \mathcal{A} , \tilde \varphi )$ be a scarce $\mathcal{C}_k$-noncommutative probability space.
The {\em $\mathcal{C}_k$-non-crossing cumulant functionals} are a family of multilinear maps 
$(\tilde \kappa _n: \mathcal{A}^n \rightarrow \mathcal{C}_k)_{n=1}^\infty $, 
uniquely determined by the following equation : 
for every $n\geq 1$ and every $a_1,\ldots ,a_n\in \mathcal{A}$, 
\begin{equation} \label{fmc}
\sum_{p\in NC^{(A)}(n)}\tilde \kappa _p(a_1,\ldots ,a_n)=\tilde \varphi (a_1\cdots  a_n).
\end{equation}
\end{Def}

In free probability of type A, the formula above is known as 
the free moment-cumulant formula \cite{hp}. The only difference is 
that computations here take place in the unital commutative complex algebra $\mathcal{C}_k$ 
instead of $\CC$. However, the proofs (see \cite{nsbook}) 
of the following classical results remain valid 
in this setting. That is why we record them without proof.\\
For every $n\geq 1$ and every $a_1,\ldots ,a_n\in \mathcal{A}$ we have that:
\begin{equation} \label{invfmc}
\tilde \kappa _n(a_1,\ldots ,a_n)=\sum_{p\in NC^{(A)}(n)} \mbox{M\"ob}(p,1_n) \tilde \varphi _p(a_1,\ldots ,a_n),
\end{equation}
where \mbox{M\"ob} is the M\"obius function of the lattice of non-crossing partitions. 
Obviously, the multilinear maps 
$(\tilde \varphi _n: \mathcal{A}^n \rightarrow \mathcal{C}_k)_{n=1}^\infty $ 
implicitely used in formula \eqref{invfmc} 
are defined by $\tilde \varphi _n(a_1,\ldots ,a_n)=\tilde \varphi (a_1\cdots  a_n)$.

\begin{Prop} \label{cwsae}
One has that $\tilde \kappa _n (a_1, \ldots , a_n) = 0$
whenever $n \geq 2$, $a_1, \ldots , a_n \in \mathcal{A}$,
and there exists $1 \leq i \leq n$ such that $a_i \in \CC 1_{\mathcal{A}}$. 
\end{Prop}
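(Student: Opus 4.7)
The plan is to argue by induction on $n$, transplanting the standard scalar-valued argument from usual free probability: since $\mathcal{C}_k$ is a unital commutative algebra and $\tilde \varphi(1_{\mathcal{A}}) = 1_{\mathcal{C}_k}$, every step in the classical argument transfers verbatim. By multilinearity of $\tilde \kappa_n$ it suffices to treat the case where the distinguished entry is exactly $a_i = 1_{\mathcal{A}}$. Note also that applying \eqref{fmc} at $n=1$ yields $\tilde \kappa_1 = \tilde \varphi$, so in particular $\tilde \kappa_1(1_{\mathcal{A}}) = 1_{\mathcal{C}_k}$.

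The base case $n=2$ is immediate: $\tilde \kappa_2(1_{\mathcal{A}}, a) = \tilde \varphi(a) - \tilde \kappa_1(1_{\mathcal{A}}) \tilde \kappa_1(a) = \tilde \varphi(a) - \tilde \varphi(a) = 0$, and the symmetric case $\tilde \kappa_2(a, 1_{\mathcal{A}})$ is identical. For the inductive step with $n \geq 3$, I would isolate the term indexed by $1_n$ in \eqref{fmc} and sort the remaining partitions $p \in NC^{(A)}(n)$ according to the block $V \in \mbox{bl}(p)$ containing the index $i$. If $|V| = m$ with $2 \leq m \leq n-1$, the factor $\tilde \kappa_m$ attached to $V$ vanishes by the inductive hypothesis (one of its inputs is $1_{\mathcal{A}}$), so $\tilde \kappa_p(a_1, \ldots, a_n) = 0$. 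The only surviving partitions $p \neq 1_n$ are those for which $\{i\}$ is a singleton block of $p$; such partitions are in canonical bijection with $NC^{(A)}([n] \setminus \{i\}) \simeq NC^{(A)}(n-1)$ via $p \mapsto p' := p \setminus \{\{i\}\}$, and one has $\tilde \kappa_p(a_1, \ldots, a_n) = \tilde \kappa_1(1_{\mathcal{A}}) \cdot \tilde \kappa_{p'}(a_1, \ldots, \hat{a_i}, \ldots, a_n) = \tilde \kappa_{p'}(a_1, \ldots, \hat{a_i}, \ldots, a_n)$. Summing over these $p$ and re-applying \eqref{fmc} at size $n-1$ yields $\tilde \varphi(a_1 \cdots \hat{a_i} \cdots a_n)$, which equals $\tilde \varphi(a_1 \cdots a_n)$ since $a_i = 1_{\mathcal{A}}$. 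Thus the full moment-cumulant identity collapses to $\tilde \varphi(a_1 \cdots a_n) = \tilde \kappa_n(a_1, \ldots, a_n) + \tilde \varphi(a_1 \cdots a_n)$, giving the desired vanishing.

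The only mildly delicate point is the bookkeeping around the bijection between partitions of $[n]$ having $\{i\}$ as a singleton block and partitions of $[n-1]$, together with the compatibility of the block-indexed products of cumulants under this bijection; this is immediate from the notational definition of $\tilde \kappa_p$ once one observes that deleting a singleton block $\{i\}$ only multiplies the product by the scalar $\tilde \kappa_1(1_{\mathcal{A}}) = 1_{\mathcal{C}_k}$. Apart from this, the proof is a direct transcription of the classical argument into the $\mathcal{C}_k$-valued setting, consistent with the author's remark that such results remain valid in this context.
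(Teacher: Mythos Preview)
Your argument is correct and is exactly the classical induction from usual free probability transplanted to the $\mathcal{C}_k$-valued setting; the paper itself does not write out a proof but simply records the proposition with the remark that the proofs from \cite{nsbook} remain valid here, which is precisely what you have verified.
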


\begin{Prop} \label{cwpae}
Let $x_1, \ldots , x_s$ be in $\mathcal{A}$ and consider some products 
of the form
\[
a_1 = x_1 \cdots x_{s_1}, \ a_2 = x_{s_1 +1} \cdots x_{s_2}, 
\ \ldots , \ a_n = x_{s_{n-1} +1} \cdots x_{s_n},
\]
where $1 \leq s_1 < s_2 < \cdots < s_n = s$. Then 
$$\tilde \kappa _n (a_1, \ldots , a_n) = 
\sum_{ \begin{array}{c}
{\scriptstyle \pi \in NC(s) \ such} \\
{\scriptstyle that \ \pi \vee \theta = 1_s}
\end{array}  } \ \ \tilde \kappa _{\pi} (x_1, \ldots , x_s),$$
where $\theta \in NC(s)$ is the partition : 
$$\theta = \{ \{ 1, \ldots , s_1 \}, \{ s_1 + 1, \ldots , s_2 \} , \ldots ,
\{ s_{n-1} + 1, \ldots , s_n \} \}.$$
\end{Prop}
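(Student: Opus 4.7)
The plan is to adapt the classical cumulants-with-products-as-entries argument of free probability of type A (see \cite{nsbook}); since $\mathcal{C}_k$ is a unital commutative algebra, the M\"obius-inversion machinery underlying the proof carries over verbatim, the only change being that scalars live in $\mathcal{C}_k$ rather than in $\CC$.

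First I would introduce a lifting map. For $\pi \in NC^{(A)}(n)$, let $\hat{\pi} \in NC^{(A)}(s)$ denote the partition whose blocks are $\bigcup_{i \in V}\{s_{i-1}+1, \ldots, s_i\}$ (with $s_0 := 0$) as $V$ ranges over the blocks of $\pi$. Because $\theta$ is an interval partition and $\pi$ is non-crossing, $\hat{\pi}$ is again non-crossing, and the map $\pi \mapsto \hat{\pi}$ is an order-preserving bijection between $NC^{(A)}(n)$ and the interval $[\theta, 1_s]$ of $NC^{(A)}(s)$. In particular, for every $\sigma \in NC^{(A)}(s)$ the join $\sigma \vee \theta$ lies in $[\theta, 1_s]$, hence equals $\hat{\pi}$ for a unique $\pi \in NC^{(A)}(n)$.

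Next I would compute $\tilde \varphi_\rho(a_1, \ldots, a_n)$ in two ways, for an arbitrary $\rho \in NC^{(A)}(n)$. Applying the free moment--cumulant formula \eqref{fmc} block by block to the $a_j$'s gives
$$\tilde \varphi_\rho(a_1, \ldots, a_n) \;=\; \sum_{\pi \preceq \rho} \tilde \kappa_\pi(a_1, \ldots, a_n).$$
On the other hand, since each $a_j$ is the product of consecutive $x_i$'s in their natural order, unwinding the definition of $\tilde \varphi_\rho$ yields the identification $\tilde \varphi_\rho(a_1, \ldots, a_n) = \tilde \varphi_{\hat{\rho}}(x_1, \ldots, x_s)$; then \eqref{fmc} over $NC^{(A)}(s)$, together with a grouping of the sum by the value of $\sigma \vee \theta$, gives
$$\tilde \varphi_{\hat{\rho}}(x_1, \ldots, x_s) \;=\; \sum_{\sigma \preceq \hat{\rho}} \tilde \kappa_\sigma(x_1, \ldots, x_s) \;=\; \sum_{\pi \preceq \rho} \ \sum_{\substack{\sigma \in NC^{(A)}(s) \\ \sigma \vee \theta = \hat{\pi}}} \tilde \kappa_\sigma(x_1, \ldots, x_s),$$
where the second equality uses that $\sigma \preceq \hat{\rho} \Leftrightarrow \sigma \vee \theta \preceq \hat{\rho}$ (since $\theta \preceq \hat{\rho}$) combined with the bijection of the previous paragraph.

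Equating the two expressions yields, for every $\rho \in NC^{(A)}(n)$,
$$\sum_{\pi \preceq \rho} \tilde \kappa_\pi(a_1, \ldots, a_n) \;=\; \sum_{\pi \preceq \rho} \ \sum_{\substack{\sigma \in NC^{(A)}(s) \\ \sigma \vee \theta = \hat{\pi}}} \tilde \kappa_\sigma(x_1, \ldots, x_s).$$
M\"obius inversion on $NC^{(A)}(n)$ then extracts the termwise identity $\tilde \kappa_\pi(a_1, \ldots, a_n) = \sum_{\sigma \vee \theta = \hat{\pi}} \tilde \kappa_\sigma(x_1, \ldots, x_s)$ for every $\pi$; specializing to $\pi = 1_n$ (whence $\hat{\pi} = 1_s$) gives the proposition. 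The main delicate point is the combinatorial lemma of step one---that the lifting $\pi \mapsto \hat{\pi}$ is a lattice isomorphism onto $[\theta, 1_s]$ and that the join $\sigma \vee \theta$ is captured by this bijection---but both are standard properties of non-crossing partitions with respect to interval partitions, so I anticipate no real obstacle.
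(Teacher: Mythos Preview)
Your proposal is correct and is precisely the approach the paper indicates: the paper does not actually write out a proof of this proposition, but explicitly states that the classical proofs from \cite{nsbook} remain valid verbatim because the computations take place in the unital commutative algebra $\mathcal{C}_k$ instead of $\CC$. What you have written is exactly that classical M\"obius-inversion argument (lifting $\pi\mapsto\hat\pi$ onto $[\theta,1_s]$, expanding $\tilde\varphi_\rho$ two ways, grouping by $\sigma\vee\theta$, and inverting), so you are in complete agreement with the paper's intended proof.
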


Given a $\mathcal{C}_k$-noncommutative random variable $a\in ( \mathcal{A} , \tilde \varphi )$, 
the quantities $\tilde \kappa _n(a,\ldots ,a)$ are called its 
{\em $\mathcal{C}_k$-valued cumulants}, and they are collected in a power series : 

\begin{Def} 
Let $( \mathcal{A} , \tilde \varphi )$ be a scarce 
$\mathcal{C}_k$-noncommutative probability space.
The {\em $\mathcal{C}_k$-valued R-transform} of $a\in \mathcal{A} $ 
is the power series $\tilde R_a\in \Theta _{\mathcal{C}_k}^{(A)}$ defined as follows :
$$\tilde R_a(z):=\sum_{n=1}^{\infty } \tilde \kappa _n(a,\ldots ,a) z^n.$$
\end{Def}

Following the well-known result of \cite{spe} stating roughly that, 
in a usual noncommutative probability space, subsets are free 
if and only if they satisfy the vanishing of mixed cumulants 
condition, we generalize this condition to our setting : 

\begin{Def} 
Let $( \mathcal{A} , \tilde \varphi )$ be a scarce 
$\mathcal{C}_k$-noncommutative probability space 
and $\mathcal{M}_1,\ldots ,\mathcal{M}_n$ be 
subsets of $\mathcal{A} $.
We say that $\mathcal{M}_1,\ldots ,\mathcal{M}_n$ 
have {\em vanishing mixed $\mathcal{C}_k$-cumulants} if
$$\tilde \kappa _m(a_1,\ldots ,a_m)=0$$
whenever $a_1\in \mathcal{M}_{i_1},\ldots ,a_m\in \mathcal{M}_{i_m}$ 
and $\exists 1\leq s<t\leq m$, such that, $i_s\not=i_t$.
\end{Def}

As announced, infinitesimal freeness of order $k$ is defined 
by the vanishing of mixed $\mathcal{C}_k$-cumulants condition. More precisely : 

\begin{Def} \label{f}
We will say that subsets $\mathcal{M}_1,\ldots ,\mathcal{M}_n\subseteq \mathcal{A} $ 
of a scarce $\mathcal{C}_k$-noncommutative probability space $( \mathcal{A} , \tilde \varphi )$ 
are {\em infinitesimally free of order $k$} if they have vanishing mixed $\mathcal{C}_k$-cumulants. 
\end{Def}

\begin{Rem} 
Using a classical argument in free probability, one can prove that, 
if $\mathcal{A}_1,\ldots ,\mathcal{A}_n$ are unital subalgebras 
which are infinitesimally free of order $k$ 
in a scarce $\mathcal{C}_k$-noncommutative probability space 
$( \mathcal{A} , \tilde \varphi )$, then one has : 
$$\tilde \varphi (a_1 \cdots  a_m)=0$$
whenever $a_1\in \mathcal{A}_{i_1},\ldots ,a_m\in \mathcal{A}_{i_m}$ 
with $i_1\not=\ldots \not=i_m$ satisfy 
$\tilde \varphi (a_1)=\ldots =\tilde \varphi (a_m)=0$.\\
The converse in our $\mathcal{C}_k$-valued situation is not true, 
because one cannot use 
the nice "centering trick", as noticed in $\cite{fn}$ Remark 4.9. 
\end{Rem}

In the next subsection, we switch to the infinitesimal framework, 
and define infinitesimal non-crossing cumulant functionals, with 
the intuition that they should appear as the coefficients in the decomposition 
of the $\mathcal{C}_k$-non-crossing cumulant functionals in the 
canonical basis of $\mathcal{C}_k$. 

\begin{center}
{\bf 3.3 Infinitesimal non-crossing cumulant functionals}
\end{center}

In this short subsection, we focus on an infinitesimal 
noncommutative probability space of order $k$ structure 
$(\mathcal{A}, (\varphi ^{(i)})_{0\leq i\leq k})$. 
The aim is to define cumulants and freeness in this setting, 
in a consistent way with the last subsection. 
For convenience, we will use the following notation : 

\begin{Not} 
For a family of multilinear maps $(r_n^{(i)}: \mathcal{A}^n \rightarrow \CC, 0\leq i\leq k)_{n=1}^\infty $, 
we define for any $n\in \NN$, any $\pi = \{V_1\sqsubset \cdots \sqsubset V_h\} \in NC^{(A)}(n)$ 
and any $\lambda \in \Lambda _{n,h}$ (defined by \eqref{lambdaset}) the $n$-linear functional 
$r_\pi ^{(\lambda )} : \mathcal{A}^n \rightarrow \CC$ by 
$$r_\pi ^{(\lambda )}(a_1,\ldots ,a_n):=\prod _{i=1}^h r_{|V_i|}^{(\lambda _i)}((a_1,\ldots ,a_n) \mid V_i).$$
\end{Not}

The underlying idea is to consider the $\mathcal{C}_k$-non-crossing cumulant functionals 
$(\tilde \kappa _n: \mathcal{A}^n \rightarrow \mathcal{C}_k)_{n=1}^\infty $ 
in the associated scarce $\mathcal{C}_k$-noncommutative probability space 
$( \mathcal{A} , \tilde \varphi )$ (see Remark \ref{association}), 
and then to define the required $n$-th infinitesimal non-crossing 
cumulant functionals as the $n$-linear forms appearing as coefficients in the linear decomposition of 
$\tilde \kappa _n: \mathcal{A}^n \rightarrow \mathcal{C}_k$ 
in the canonical basis of $\mathcal{C}_k$. This leads to the 
following definition: 

\begin{Def} 
Let $(\mathcal{A}, (\varphi ^{(i)})_{0\leq i\leq k})$ 
be an infinitesimal noncommutative probability space of order $k$.
The {\em infinitesimal non-crossing cumulant functionals of order $k$} are a family of multilinear maps 
$(\kappa _n^{(i)}: \mathcal{A}^n \rightarrow \CC , 0\leq i\leq k)_{n=1}^\infty $, 
uniquely determined by the following equation : 
for every $n\geq 1$, every $0\leq i\leq k$ and every $a_1,\ldots ,a_n\in \mathcal{A}$ we have that:
\begin{equation} \label{inffmc}
\sum_{\substack{p\in NC^{(A)}(n)\\p:=\{V_1,\ldots ,V_h \} }} \sum_{\lambda \in \Lambda _{h,i}} 
C_i^{\lambda _1,\ldots ,\lambda _h} \kappa _{p}^{(\lambda )}(a_1,\ldots ,a_n)=
\varphi ^{(i)}(a_1\cdots  a_n).
\end{equation}
\end{Def}

Infinitesimal freeness in the framework of 
an infinitesimal noncommutative probability space of order $k$ 
is obviously defined by the vanishing of mixed  
infinitesimal cumulants.

\begin{Def} \label{inff}
We will say that subsets $\mathcal{M}_1,\ldots ,\mathcal{M}_n$ of 
an infinitesimal noncommutative probability space of order $k$ 
are {\em infinitesimally free of order $k$} if they have 
{\em vanishing mixed infinitesimal cumulants}, which means that, for each $0\leq i\leq k$, 
$$\kappa _m^{(i)}(a_1,\ldots ,a_m)=0$$
whenever $a_1\in \mathcal{M}_{i_1},\ldots ,a_m\in \mathcal{M}_{i_m}$ 
and $\exists 1\leq s<t\leq m$, such that, $i_s\not=i_t$.
\end{Def}

\begin{Rem} 
It is straightforward to check, using formula \eqref{inffmc}, that the 
infinitesimal non-crossing cumulant functionals of an 
infinitesimal noncommutative probability space of order $k$ 
are indeed linked to  
the $\mathcal{C}_k$-non-crossing cumulant functionals of the associated 
scarce $\mathcal{C}_k$-noncommutative probability space by : 
\begin{equation} \label{decompcum}
\tilde \kappa _n=\sum_{i=0}^k \kappa _n^{(i)} \frac{\varepsilon ^i}{i!}.
\end{equation} 
A consequence of formulas \eqref{invfmc} and \eqref{decompcum} 
is the validity of the following inverse formula:
\begin{equation} \label{infinvfmc}
\kappa _n^{(i)}(a_1,\ldots ,a_n)=
\sum_{\substack{p\in NC^{(A)}(n)\\p:=\{V_1,\ldots ,V_h \} }}\sum_{\lambda \in \Lambda _{h,i}}\mbox{M\"ob}(p,1_n)
C_i^{\lambda _1,\ldots ,\lambda _h} \varphi _{p}^{(\lambda )}(a_1,\ldots ,a_n) , 
\end{equation} 
and of the following proposition : 

\begin{Prop} 
One has that $\kappa _n^{(i)} (a_1, \ldots , a_n) = 0$
whenever $0\leq i\leq k$, $n \geq 2$, $a_1, \ldots , a_n \in \mathcal{A}$,
and there exists $1 \leq j \leq n$ such that $a_j \in \CC 1_{\mathcal{A}}$. 
\end{Prop}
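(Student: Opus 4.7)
The plan is to reduce the statement to the already-recorded Proposition \ref{cwsae} about the $\mathcal{C}_k$-non-crossing cumulant functionals via the decomposition formula \eqref{decompcum}. Since the infinitesimal cumulants $\kappa_n^{(i)}$ were defined precisely as the coordinates of $\tilde\kappa_n$ in the canonical basis $(\frac{\varepsilon^i}{i!})_{0\leq i\leq k}$ of $\mathcal{C}_k$, a vanishing statement for $\tilde\kappa_n$ should translate coordinate-wise into a vanishing statement for each $\kappa_n^{(i)}$.

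More concretely, fix $n\geq 2$ and $a_1,\ldots,a_n\in \mathcal{A}$ with some $a_j\in \CC 1_{\mathcal{A}}$. First I would view $(\mathcal{A},(\varphi^{(i)})_{0\leq i\leq k})$ inside its associated scarce $\mathcal{C}_k$-noncommutative probability space $(\mathcal{A},\tilde\varphi)$ (Remark \ref{association}). Applying Proposition \ref{cwsae} in that setting gives
\[
\tilde\kappa_n(a_1,\ldots,a_n)=0_{\mathcal{C}_k}.
\]
Next, I would invoke formula \eqref{decompcum} to expand the left-hand side in the canonical basis:
\[
\sum_{i=0}^{k}\kappa_n^{(i)}(a_1,\ldots,a_n)\,\frac{\varepsilon^i}{i!}=0_{\mathcal{C}_k}.
\]
Since $(\frac{\varepsilon^i}{i!})_{0\leq i\leq k}$ is a linear basis of $\mathcal{C}_k$, the coefficients $\kappa_n^{(i)}(a_1,\ldots,a_n)$ must all be zero, which is the desired conclusion.

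There is no real obstacle here: the whole content has been packaged into the equivalence of the two structures recorded in Remark \ref{association} and into the explicit decomposition \eqref{decompcum}. The only thing to be checked carefully is that Proposition \ref{cwsae} is genuinely applicable, i.e. that the proof sketched in \cite{nsbook} for the scalar-valued case goes through verbatim for the $\mathcal{C}_k$-valued cumulants; but this is exactly what is asserted in the paragraph following \eqref{fmc}, where it is pointed out that all the classical arguments remain valid because $\mathcal{C}_k$ is a unital commutative $\CC$-algebra. Alternatively, one could give a direct proof by plugging $a_j=1_{\mathcal{A}}$ into the inverse formula \eqref{infinvfmc} and exploiting cancellations coming from the M\"obius function, but the basis-decomposition argument above is strictly shorter.
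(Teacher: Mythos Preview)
Your proposal is correct and matches the paper's own argument: the paper simply records this proposition as a consequence of formulas \eqref{invfmc} and \eqref{decompcum} (equivalently, of Proposition \ref{cwsae} together with \eqref{decompcum}), which is exactly the basis-decomposition reduction you carry out.
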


Another consequence of relation \eqref{decompcum} is that subsets 
$\mathcal{M}_1,\ldots ,\mathcal{M}_n$ of an infinitesimal 
noncommutative probability space of order $k$ 
are infinitesimally free of order $k$ if and only if they are
infinitesimally free of order $k$ in the associated scarce 
$\mathcal{C}_k$-noncommutative probability space.
\end{Rem}

\begin{Rem} \label{inffimpliesf}
Let $(\mathcal{A}, (\varphi ^{(i)})_{0\leq i\leq k})$ 
be an infinitesimal noncommutative probability space of order $k$, 
and consider its infinitesimal non-crossing cumulant functionals 
$(\kappa _n^{(i)}: \mathcal{A}^n \rightarrow \CC , 0\leq i\leq k)_{n=1}^\infty $.
It is interesting to notice that the multilinear maps 
$(\kappa _n^{(0)}: \mathcal{A}^n \rightarrow \CC )_{n=1}^\infty $ 
and $(\kappa _n^{(1)}: \mathcal{A}^n \rightarrow \CC )_{n=1}^\infty $
are respectively the usual non-crossing cumulant functionals in the 
noncommutative probability space $(\mathcal{A}, \varphi ^{(0)})$ 
and the infinitesimal non-crossing cumulant functionals of \cite{fn} in the 
infinitesimal noncommutative probability space $(\mathcal{A}, \varphi ^{(0)}, \varphi ^{(1)})$. 
This implies that subsets that are infinitesimally free of order $k$ 
are in particular free in $(\mathcal{A}, \varphi ^{(0)})$ and 
infinitesimally free in $(\mathcal{A}, \varphi ^{(0)}, \varphi ^{(1)})$ 
in the sense of \cite{fn}.\\
Infinitesimal freeness of unital subalgebras in \cite{fn}, 
as well as freeness of type B in \cite{bgn}, 
is defined in terms of moments. Section 8 will provide such 
a characterization of the infinitesimal freeness of order $k$ 
of unital subalgebras of an infinitesimal noncommutative probability space of order $k$ 
in terms of moments.
\end{Rem}

As stated in Remark \ref{inffimpliesf}, infinitesimal freeness of order $k$ of unital subalgebras 
$\mathcal{A}_1,\ldots ,\mathcal{A}_n\subseteq (\mathcal{A}, (\varphi ^{(i)})_{0\leq i\leq k})$ 
of an infinitesimal noncommutative probability space of order $k$ 
implies freeness of $\mathcal{A}_1,\ldots ,\mathcal{A}_n$ in the 
noncommutative probability space $(\mathcal{A}, \varphi ^{(0)})$. 
Conversely, is it possible to "upgrade" freeness of given unital subalgebras 
of a noncommutative probability space to infinitesimal freeness of order $k$ ? 
This question is discussed in the next subsection. 

\begin{center}
{\bf 3.4 Upgrading freeness to infinitesimal freeness of order $k$}
\end{center}

Given a noncommutative probability space $(\mathcal{A},\varphi )$ 
and free unital subalgebras $\mathcal{A}_1,\ldots ,\mathcal{A}_n$ of $\mathcal{A}$, 
the question of how to build a linear form $\varphi '$ on $\mathcal{A}$ 
such that $\mathcal{A}_1,\ldots ,\mathcal{A}_n$ are infinitesimally free in 
the infinitesimal noncommutative probability space $(\mathcal{A},\varphi ,\varphi ')$ 
is adressed in \cite{fn}. 
Among the answers given there, there is the idea to define $\varphi ':=\varphi \circ D$, 
where $D$ is a derivation of $\mathcal{A}$ 
(a linear map $D : \mathcal{A}\longrightarrow \mathcal{A}$ satisfying 
$\forall a,b\in \mathcal{A}, D(a\cdot b)=D(a)\cdot b+a\cdot D(b)$) 
such that $D(\mathcal{A}_j)\subseteq \mathcal{A}_j$ for each $1\leq j\leq n$. 
We examine the question of how to build linear forms $\varphi ^{(1)},\ldots ,\varphi ^{(k)}$ on $\mathcal{A}$ 
such that $\mathcal{A}_1,\ldots ,\mathcal{A}_n$ are infinitesimally free of order $k$ in 
the infinitesimal noncommutative probability space $(\mathcal{A},\varphi ,\varphi ^{(1)},\ldots ,\varphi ^{(k)})$. 
The natural idea consisting in defining $\varphi ^{(i)}:=\varphi \circ D^{i}$ 
where $D$ is a derivation of $\mathcal{A}$ such that $D(\mathcal{A}_j)\subseteq \mathcal{A}_j$ 
for each $1\leq j\leq n$ is a possible answer, as proved below : 

\begin{Prop} \label{upgrade}
Let $(\mathcal{A},\varphi )$ be a noncommutative probability space 
and let $D : \mathcal{A}\rightarrow \mathcal{A}$ be a derivation. 
Define $\varphi ^{(i)}:=\varphi \circ D^{i}$. 
Let the infinitesimal non-crossing cumulant functionals associated to 
$(\mathcal{A},\varphi ,\varphi ^{(1)},\ldots ,\varphi ^{(k)})$ be denoted 
by $(\kappa _n^{(i)}: \mathcal{A}^n \rightarrow \CC , 0\leq i\leq k)_{n=1}^\infty $. 
Then, for every $n\geq 1, 0\leq i\leq k$ and $a_1,\ldots ,a_n\in \mathcal{A}$ one has 
$$\kappa _n^{(i)}(a_1,\ldots ,a_n)=\sum _{\lambda \in \Lambda _{n,i}} C_i^{\lambda _1,\ldots ,\lambda _n} 
\kappa _n(D^{\lambda _1}(a_1),\ldots ,D^{\lambda _n}(a_n)).$$
\end{Prop}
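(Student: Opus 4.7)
The plan is to work in the associated scarce $\mathcal{C}_k$-noncommutative probability space $(\mathcal{A},\tilde\varphi)$ and to exploit the fact that the formal exponential of the ``infinitesimal derivation'' $\varepsilon D$ is an algebra homomorphism. Concretely, I would introduce the map
$$E : \mathcal{A}\longrightarrow \mathcal{A}\otimes_\CC \mathcal{C}_k,\qquad E(a):=\sum_{i=0}^{k} D^{i}(a)\otimes \frac{\varepsilon^{i}}{i!}.$$
Because $D$ is a derivation, the iterated Leibniz rule $D^{i}(ab)=\sum_{j=0}^{i}C_{i}^{j}D^{j}(a)D^{i-j}(b)$ matches exactly the product law \eqref{prod} of $\mathcal{C}_k$ (and $D(1)=0$ gives $E(1)=1$), so $E$ is a unital $\CC$-algebra homomorphism. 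The hypothesis $\varphi^{(i)}=\varphi\circ D^{i}$ then reads $\tilde\varphi(a)=(\varphi\otimes\mathrm{id})(E(a))$ for every $a\in\mathcal{A}$; applying $E$ to a product and using its homomorphism property shows more generally
$$\tilde\varphi(a_{1}\cdots a_{n})=\bar\varphi\bigl(E(a_{1})\cdots E(a_{n})\bigr),\qquad \bar\varphi:=\varphi\otimes \mathrm{id}_{\mathcal{C}_k}.$$

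Next I would view $(\mathcal{A}\otimes\mathcal{C}_k,\bar\varphi)$ itself as a scarce $\mathcal{C}_k$-noncommutative probability space (the $\mathcal{C}_k$-action being central), and define its $\mathcal{C}_k$-valued non-crossing cumulants $\bar\kappa_{n}$ by formula \eqref{fmc}. Möbius inversion, combined with the $\mathcal{C}_k$-linearity of $\bar\varphi$ and the centrality of $\mathcal{C}_k$, shows that $\bar\kappa_{n}$ is actually $\mathcal{C}_k$-multilinear. Moreover, restricted to elements of the form $a\otimes 1$ with $a\in\mathcal{A}$, the defining free moment-cumulant formula for $\bar\kappa_{n}$ reduces to the one for the classical cumulants $\kappa_{n}$ of $(\mathcal{A},\varphi)$, so $\bar\kappa_{n}(a_{1}\otimes 1,\ldots,a_{n}\otimes 1)=\kappa_{n}(a_{1},\ldots,a_{n})$.

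The moment identity displayed above together with the uniqueness of cumulants (Möbius inversion applied in both spaces) yields $\tilde\kappa_{n}(a_{1},\ldots,a_{n})=\bar\kappa_{n}(E(a_{1}),\ldots,E(a_{n}))$. Expanding each $E(a_{j})=\sum_{\lambda_{j}=0}^{k} D^{\lambda_{j}}(a_{j})\otimes \varepsilon^{\lambda_{j}}/\lambda_{j}!$ and invoking $\mathcal{C}_k$-multilinearity gives
$$\tilde\kappa_{n}(a_{1},\ldots,a_{n})=\sum_{\lambda_{1},\ldots,\lambda_{n}=0}^{k} \frac{\varepsilon^{\lambda_{1}+\cdots+\lambda_{n}}}{\lambda_{1}!\cdots \lambda_{n}!}\,\kappa_{n}\bigl(D^{\lambda_{1}}(a_{1}),\ldots,D^{\lambda_{n}}(a_{n})\bigr).$$
Grouping terms by $i:=\lambda_{1}+\cdots+\lambda_{n}$ and using the definition \eqref{lambdaset} of $\Lambda_{n,i}$ and the multinomial coefficient $C_{i}^{\lambda_{1},\ldots,\lambda_{n}}=i!/(\lambda_{1}!\cdots\lambda_{n}!)$, the right-hand side reorganizes as $\sum_{i=0}^{k}(\varepsilon^{i}/i!)\sum_{\lambda\in\Lambda_{n,i}}C_{i}^{\lambda_{1},\ldots,\lambda_{n}}\kappa_{n}(D^{\lambda_{1}}(a_{1}),\ldots,D^{\lambda_{n}}(a_{n}))$. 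Comparing with the canonical decomposition \eqref{decompcum} of $\tilde\kappa_{n}$ in the basis $(\varepsilon^{i}/i!)_{0\le i\le k}$ of $\mathcal{C}_k$ and identifying coefficients yields the claimed formula.

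The only non-routine points are the algebra-homomorphism property of $E$, which reduces to matching Leibniz with \eqref{prod}, and the $\mathcal{C}_k$-multilinearity (plus compatibility with classical cumulants on $\mathcal{A}\otimes 1$) of $\bar\kappa_{n}$; both follow essentially from Möbius inversion and should be the main bookkeeping steps. An alternative, more computational route would be to start from the inverse free moment-cumulant formula \eqref{infinvfmc}, expand each $\varphi^{(\mu_j)}=\varphi\circ D^{\mu_j}$ via the multinomial Leibniz rule inside every block, and then resum — but the conceptual homomorphism argument above is cleaner and avoids juggling nested sums over partitions.
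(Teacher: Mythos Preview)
Your argument is correct and takes a genuinely different route from the paper's. The paper proceeds by direct verification: it sets $\eta_n^{(i)}$ equal to the right-hand side of the claimed identity and checks that the family $(\eta_n^{(i)})$ satisfies the defining moment-cumulant relation \eqref{inffmc}, by unfolding the double sum over $p\in NC^{(A)}(n)$ and $\lambda\in\Lambda_{h,i}$, using a bijection to pass to a sum over $\Lambda_{n,i}$, swapping summations to recover the ordinary free moment-cumulant formula, and then invoking the Leibniz rule to identify $\sum_{\lambda}C_i^{\lambda}\varphi(D^{\lambda_1}(a_1)\cdots D^{\lambda_n}(a_n))$ with $\varphi^{(i)}(a_1\cdots a_n)$. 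Your proof instead packages the Leibniz rule once and for all into the statement that $E=\sum_i D^i\otimes\varepsilon^i/i!$ is a unital algebra homomorphism $\mathcal{A}\to\mathcal{A}\otimes\mathcal{C}_k$, and then transports cumulants through $E$ via M\"obius inversion; the computation of the paper is replaced by the structural observation that $\tilde\varphi=\bar\varphi\circ E$ with $E$ multiplicative forces $\tilde\kappa_n=\bar\kappa_n\circ E^{\otimes n}$, after which $\mathcal{C}_k$-multilinearity of $\bar\kappa_n$ and $\bar\kappa_n|_{(\mathcal{A}\otimes 1)^n}=\kappa_n$ do the rest. The paper's approach is shorter and stays entirely within the explicit formulas of Section~3.3; your approach is more conceptual, makes transparent why the answer has the shape $\kappa_n$ composed with tensor powers of the derivation, and would generalize immediately to any unital homomorphism in place of $E$. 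The ``alternative computational route'' you mention at the end is essentially what the paper does, except that the paper verifies the forward formula \eqref{inffmc} rather than starting from \eqref{infinvfmc}.
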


\begin{proof}
Define the family of multilinear functionals 
$(\eta _n^{(i)}: \mathcal{A}^n \rightarrow \CC , 0\leq i\leq k)_{n=1}^\infty $ 
by the following formulas : for every $n\geq 1, 0\leq i\leq k$ and $b_1,\ldots ,b_n\in \mathcal{A}$
$$\eta _n^{(i)}(b_1,\ldots ,b_n)=\sum _{\lambda \in \Lambda _{n,i}} C_i^{\lambda _1,\ldots ,\lambda _n} 
\kappa _n(D^{\lambda _1}(b_1),\ldots ,D^{\lambda _n}(b_n)).$$
Our aim is then to prove that, for every $n\geq 1, 0\leq i\leq k$, $\eta _n^{(i)}=\kappa _n^{(i)}$. 
We verify that the functionals $(\eta _n^{(i)}, 0\leq i\leq k)_{n=1}^\infty $ satisfy the
equations \eqref{inffmc} defining the infinitesimal non-crossing cumulant functionals. 
The left-hand side of this formula writes : 
\begin{equation} \label{sum}
\sum_{\substack{p\in NC^{(A)}(n)\\p:=\{V_1,\ldots ,V_h \} }}\sum_{\lambda \in \Lambda _{h,i}} 
C_i^{\lambda _1,\ldots ,\lambda _h}\eta _{p}^{(\lambda )}(a_1,\ldots ,a_n).
\end{equation}
Each $\eta _{|V_j|}^{(\lambda _j)}((a_1,\ldots ,a_n) \mid V_j)$ in the latter is a sum indexed by 
$\Lambda _{|V_j|,\lambda _j}$, involving variables $a_i, i\in V_j$. 
Given $p:=\{V_1,\ldots ,V_h \}\in NC^{(A)}(n)$, there is a very natural bijection between 
$\{(\lambda ,(\lambda ^1,\ldots ,\lambda ^h))\in \Lambda _{h,i}\times \Lambda _{n,i}\mid \lambda ^j\in \Lambda _{|V_j|,\lambda _j}\}$ 
and the set $\Lambda _{n,i}$. 
Thus, the quantity \eqref{sum} rewrites : 
$$\sum_{p\in NC^{(A)}(n)}\sum_{\lambda \in \Lambda _{n,i}} 
C_i^{\lambda _1,\ldots ,\lambda _n}\kappa _{p}(D^{\lambda _1}(a_1),\ldots ,D^{\lambda _n}(a_n)).$$
By exchanging the summation signs, the usual free moment-cumulant formula appears, and one obtains : 
\begin{equation} \label{leib}
\sum_{\substack{p\in NC^{(A)}(n)\\p:=\{V_1,\ldots ,V_h \} }}\sum_{\lambda \in \Lambda _{h,i}} 
C_i^{\lambda _1,\ldots ,\lambda _h} \eta _{p}^{(\lambda )}(a_1,\ldots ,a_n)=
\sum_{\lambda \in \Lambda _{n,i}} C_i^{\lambda _1,\ldots ,\lambda _n} 
\varphi (D^{\lambda _1}(a_1)\cdots  D^{\lambda _n}(a_n)).
\end{equation}
Using Leibniz rule in the right-hand side of \eqref{leib}, one may conclude : 
\begin{eqnarray*}
\sum_{\substack{p\in NC^{(A)}(n)\\p:=\{V_1,\ldots ,V_h \} }}\sum_{\lambda \in \Lambda _{h,i}} 
C_i^{\lambda _1,\ldots ,\lambda _h} \eta _{p}^{(\lambda )}(a_1,\ldots ,a_n)\\
=\varphi (\sum_{\lambda \in \Lambda _{n,i}} C_i^{\lambda _1,\ldots ,\lambda _n} 
D^{\lambda _1}(a_1)\cdots  D^{\lambda _n}(a_n))\\
=\varphi (D^i(a_1\cdots  a_n))\\
=\varphi ^{(i)}(a_1\cdots  a_n).
\end{eqnarray*}
\end{proof}

\begin{Cor} 
In the notations of Proposition \ref{upgrade}, let $\mathcal{A}_1,\ldots ,\mathcal{A}_n$ 
be unital subalgebras of $\mathcal{A}$ which are free in $(\mathcal{A}, \varphi )$, 
and such that $D(\mathcal{A}_j)\subseteq \mathcal{A}_j$ for $1\leq j\leq n$. 
Then $\mathcal{A}_1,\ldots ,\mathcal{A}_n$ are infinitesimally free of order $k$ in 
$(\mathcal{A},\varphi ,\varphi ^{(1)},\ldots ,\varphi ^{(k)})$.
\end{Cor}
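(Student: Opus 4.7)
The plan is to reduce the corollary directly to Proposition \ref{upgrade} together with Speicher's vanishing-of-mixed-cumulants characterization of freeness in the noncommutative probability space $(\mathcal{A},\varphi)$. What needs to be verified, by Definition \ref{inff}, is that for every $0\leq i\leq k$, every $m\geq 1$, and every choice of $a_1\in \mathcal{A}_{i_1},\ldots,a_m\in \mathcal{A}_{i_m}$ such that the index tuple $(i_1,\ldots,i_m)$ is not constant, the mixed infinitesimal cumulant $\kappa_m^{(i)}(a_1,\ldots,a_m)$ vanishes.

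First I would apply Proposition \ref{upgrade} to rewrite
\[
\kappa_m^{(i)}(a_1,\ldots,a_m)=\sum_{\lambda\in\Lambda_{m,i}} C_i^{\lambda_1,\ldots,\lambda_m}\,\kappa_m\bigl(D^{\lambda_1}(a_1),\ldots,D^{\lambda_m}(a_m)\bigr),
\]
thereby expressing each infinitesimal cumulant as a finite linear combination of ordinary free cumulants $\kappa_m$ in $(\mathcal{A},\varphi)$ evaluated on derivatives of the $a_j$'s. The whole point of using this formula is that all arithmetic and combinatorial work is already packed inside Proposition \ref{upgrade}.

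Next I would invoke the hypothesis $D(\mathcal{A}_j)\subseteq \mathcal{A}_j$: by induction this gives $D^{\lambda_s}(a_s)\in \mathcal{A}_{i_s}$ for every exponent $\lambda_s\geq 0$ and every $s$. Hence, for each multi-index $\lambda\in\Lambda_{m,i}$, the entries of $\kappa_m\bigl(D^{\lambda_1}(a_1),\ldots,D^{\lambda_m}(a_m)\bigr)$ still sit in subalgebras with the same index sequence $(i_1,\ldots,i_m)$. Because the $\mathcal{A}_j$ are assumed free in $(\mathcal{A},\varphi)$ and $(i_1,\ldots,i_m)$ is not constant, Speicher's theorem on the vanishing of mixed free cumulants yields $\kappa_m\bigl(D^{\lambda_1}(a_1),\ldots,D^{\lambda_m}(a_m)\bigr)=0$ for every $\lambda$. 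Summing zeros gives $\kappa_m^{(i)}(a_1,\ldots,a_m)=0$, which is exactly Definition \ref{inff}.

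There is essentially no obstacle in this argument once Proposition \ref{upgrade} is available; the only small point to be careful about is that $D$-invariance of each $\mathcal{A}_j$ is stable under iteration, which is an immediate induction using $D(\mathcal{A}_j)\subseteq \mathcal{A}_j$. Thus the corollary follows in a few lines from the proposition above.
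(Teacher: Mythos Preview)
Your argument is correct and is exactly the intended one: the paper states this result as an immediate corollary of Proposition \ref{upgrade} without supplying a separate proof, and the deduction you spell out---expressing $\kappa_m^{(i)}$ via Proposition \ref{upgrade}, using $D$-invariance to keep each $D^{\lambda_s}(a_s)$ in $\mathcal{A}_{i_s}$, and then invoking the vanishing of mixed free cumulants in $(\mathcal{A},\varphi)$---is precisely what is implicit.
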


\begin{center} 
{\large\bf 4. Addition and multiplication of infinitesimally free random variables}
\end{center}

In this section, we consider $n$-tuples of 
infinitesimal noncommutative random variables 
$(a_1,\ldots ,a_n),(b_1,\ldots ,b_n)\in \mathcal{A}^n$ 
(where $(\mathcal{A}, (\varphi ^{(i)})_{0\leq i\leq k})$ 
is an infinitesimal noncommutative probability space of order $k$), 
with respective infinitesimal distributions $(\mu ^{(i)})_{0\leq i\leq k}$ 
and $(\nu ^{(i)})_{0\leq i\leq k}$. We assume that the sets 
$\{a_1,\ldots ,a_n\}$ and $\{b_1,\ldots ,b_n\}$ are infinitesimally free of order $k$ and 
we are interested in the distributions 
of the sum $(a_1,\ldots ,a_n)+(b_1,\ldots ,b_n)$ and of the product $(a_1b_1,\ldots ,a_nb_n)$. 

\begin{center}
{\bf 4.1. Addition of infinitesimally free random variables}
\end{center}

We do not provide a proof of the following result, 
which is a straightforward calculation using 
multilinearity of the infinitesimal cumulant functionals 
and definition of infinitesimal freeness. 
 
\begin{Prop} \label{propinfadd}
Let $(\mathcal{A}, (\varphi ^{(i)})_{0\leq i\leq k})$ be an 
infinitesimal noncommutative probability space of order $k$. 
Consider subsets $\mathcal{M}_1,\mathcal{M}_2$ of $\mathcal{A}$ 
that are infinitesimally free of order $k$. 
Then, one has, for each $n\geq 1$, 
each $n$-tuples $(a_1,\ldots ,a_n)\in \mathcal{M}_1^n,(b_1,\ldots ,b_n)\in \mathcal{M}_2^n$ 
and each $0\leq i\leq k$ : 
\begin{equation} \label{infadd}
\kappa _n^{(i)}(a_1+b_1,\ldots ,a_n+b_n)=\kappa _n^{(i)}(a_1,\ldots ,a_n)+\kappa _n^{(i)}(b_1,\ldots ,b_n).
\end{equation}
\end{Prop}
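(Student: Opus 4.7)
The plan is to expand the left-hand side using multilinearity of $\kappa_n^{(i)}$ and then kill all mixed terms using the definition of infinitesimal freeness of order $k$ (Definition \ref{inff}).

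More precisely, the first step is to observe that $\kappa_n^{(i)}$ is $n$-linear, so
\[
\kappa_n^{(i)}(a_1+b_1,\ldots ,a_n+b_n)=\sum_{(c_1,\ldots ,c_n)\in \prod_{j=1}^n\{a_j,b_j\}}\kappa_n^{(i)}(c_1,\ldots ,c_n),
\]
a sum of $2^n$ terms. Among these, exactly two are ``pure'': the one with $c_j=a_j$ for every $j$ (all arguments in $\mathcal{M}_1$) and the one with $c_j=b_j$ for every $j$ (all arguments in $\mathcal{M}_2$). Every other term is ``mixed'', i.e.\ involves at least one argument from $\mathcal{M}_1$ and at least one from $\mathcal{M}_2$.

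The second step is to invoke the hypothesis that $\mathcal{M}_1,\mathcal{M}_2$ are infinitesimally free of order $k$. By Definition \ref{inff}, this means precisely that $\kappa_m^{(i)}(x_1,\ldots ,x_m)=0$ as soon as the arguments are not all drawn from the same $\mathcal{M}_\ell$. Hence every mixed term in the expansion vanishes, and only the two pure terms survive, yielding exactly the right-hand side of \eqref{infadd}.

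There is essentially no obstacle: the whole content is the bilinear expansion plus one application of the vanishing of mixed cumulants. The only thing one has to be slightly careful about is that the vanishing hypothesis is stated for arguments belonging to the subsets $\mathcal{M}_\ell$ themselves (not to their linear spans), which is exactly what we need since each $c_j$ in the expansion lies in one of the two sets.
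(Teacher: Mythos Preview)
Your proof is correct and follows exactly the approach indicated in the paper, which omits the proof but states that it is ``a straightforward calculation using multilinearity of the infinitesimal cumulant functionals and definition of infinitesimal freeness.'' You have simply written out this calculation in full.
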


Using formulas \eqref{inffmc} and \eqref{infinvfmc}, 
the quantities $\kappa _m^{(i)}(a_{i_1},\ldots ,a_{i_m})$, 
$\kappa _m^{(i)}(b_{j_1},\ldots ,b_{j_m})$ for each $0\leq i\leq k$, each $m\geq 1$ 
and each subsets $\{i_1,\ldots ,i_m\} , \{j_1,\ldots ,j_m\}\subseteq [n]$ 
called respectively infinitesimal cumulants of $(a_1,\ldots ,a_n)$ and $(b_1,\ldots ,b_n)$ 
completely determine and are completely determined by the 
infinitesimal distributions of $(a_1,\ldots ,a_n)$ and $(b_1,\ldots ,b_n)$.  
Proposition \ref{propinfadd} thus has the following corollary. 

\begin{Cor} 
Let $(\mathcal{A}, (\varphi ^{(i)})_{0\leq i\leq k})$ 
be an infinitesimal noncommutative probability space of order $k$, 
and $(a_1,\ldots ,a_n),(b_1,\ldots ,b_n)\in \mathcal{A}^n$ with respective infinitesimal distributions 
$(\mu ^{(i)})_{0\leq i\leq k}$ and $(\nu ^{(i)})_{0\leq i\leq k}$. 
If the sets $\{a_1,\ldots ,a_n\}$ and $\{b_1,\ldots ,b_n\}$ are infinitesimally free of order $k$, 
then the infinitesimal distribution of $(a_1,\ldots ,a_n)+(b_1,\ldots ,b_n)$ only depends on 
$(\mu ^{(i)})_{0\leq i\leq k}$ and $(\nu ^{(i)})_{0\leq i\leq k}$. 
It is called the infinitesimal free additive convolution of order $k$
of $(\mu ^{(i)})_{0\leq i\leq k}$ and $(\nu ^{(i)})_{0\leq i\leq k}$ 
and denoted by $(\mu ^{(i)})_{0\leq i\leq k}\boxplus ^{(k)} (\nu ^{(i)})_{0\leq i\leq k}$.
\end{Cor}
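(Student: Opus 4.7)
The plan is to use the moment–cumulant duality \eqref{inffmc}--\eqref{infinvfmc} to reduce the claim to a statement about cumulants, where Proposition \ref{propinfadd} (extended to sub-tuples by the same proof) does the work. First observe that, by the inverse formula \eqref{infinvfmc}, each quantity $\kappa_m^{(i)}(a_{i_1},\ldots,a_{i_m})$ with $\{i_1,\ldots,i_m\}\subseteq[n]$ is a universal polynomial in the infinitesimal moments $\varphi^{(j)}(a_{l_1}\cdots a_{l_r})$, hence is entirely determined by $(\mu^{(i)})_{0\leq i\leq k}$. Conversely, by \eqref{inffmc}, the full collection of moments of $(a_1+b_1,\ldots,a_n+b_n)$ is determined by the joint infinitesimal cumulants $\kappa_m^{(i)}(a_{i_1}+b_{i_1},\ldots,a_{i_m}+b_{i_m})$ of all its sub-tuples. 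Thus it suffices to show that each such mixed cumulant is determined by the individual infinitesimal distributions $(\mu^{(i)})_{0\leq i\leq k}$ and $(\nu^{(i)})_{0\leq i\leq k}$.

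Fix $0\leq i\leq k$, $m\geq 1$, and indices $i_1,\ldots,i_m\in[n]$. By multilinearity of $\kappa_m^{(i)}$,
$$\kappa_m^{(i)}(a_{i_1}+b_{i_1},\ldots,a_{i_m}+b_{i_m})=\sum_{S\subseteq[m]}\kappa_m^{(i)}(c_1^S,\ldots,c_m^S),$$
where $c_j^S=a_{i_j}$ for $j\in S$ and $c_j^S=b_{i_j}$ otherwise. For every proper nonempty $S$, the tuple $(c_1^S,\ldots,c_m^S)$ mixes entries from $\{a_1,\ldots,a_n\}$ and $\{b_1,\ldots,b_n\}$, so Definition \ref{inff} and the assumed infinitesimal freeness of order $k$ of these two sets force $\kappa_m^{(i)}(c_1^S,\ldots,c_m^S)=0$. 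Only the two terms corresponding to $S=[m]$ and $S=\emptyset$ survive, yielding the sub-tuple version of Proposition \ref{propinfadd}:
$$\kappa_m^{(i)}(a_{i_1}+b_{i_1},\ldots,a_{i_m}+b_{i_m})=\kappa_m^{(i)}(a_{i_1},\ldots,a_{i_m})+\kappa_m^{(i)}(b_{i_1},\ldots,b_{i_m}).$$
The right-hand side depends only on $(\mu^{(i)})_{0\leq i\leq k}$ and $(\nu^{(i)})_{0\leq i\leq k}$ by the first observation, and plugging the resulting cumulants into \eqref{inffmc} recovers the infinitesimal distribution of the sum, proving the claim.

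There is no genuine obstacle here: the corollary is an almost formal consequence of Proposition \ref{propinfadd}, the bijective correspondence between infinitesimal moments and infinitesimal cumulants, and the definition of infinitesimal freeness as vanishing of mixed cumulants. The only mild subtlety is remembering that the distribution of an $n$-tuple encodes cumulants of all sub-tuples (not just the full one), which is handled by applying the same multilinear expansion argument to arbitrary index sequences $i_1,\ldots,i_m\in[n]$. One may then define $(\mu^{(i)})_{0\leq i\leq k}\boxplus^{(k)}(\nu^{(i)})_{0\leq i\leq k}$ to be this common infinitesimal distribution.
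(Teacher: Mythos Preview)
Your proof is correct and follows exactly the approach the paper sketches just before stating the corollary: use \eqref{infinvfmc} to pass from the individual distributions to the individual cumulants, apply the additivity of cumulants (Proposition~\ref{propinfadd}, which as stated already covers arbitrary sub-tuples since it is formulated for any $n$-tuples from $\mathcal{M}_1^n$ and $\mathcal{M}_2^n$), and then use \eqref{inffmc} to recover the distribution of the sum. You have simply spelled out the multilinear-expansion step that the paper omits as ``a straightforward calculation.''
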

 
The corollary above means that the infinitesimal free additive convolution of order $k$ 
defines an operation on infinitesimal laws. The practical way to 
compute the infinitesimal free additive convolution of order $k$ of two infinitesimal laws 
is to use consecutively the inverse of the infinitesimal version 
of the free moment-cumulant formula (formula \eqref{infinvfmc}), 
the additivity of infinitesimal cumulants (formula \eqref{infadd}), 
and finally the infinitesimal version 
of the free moment-cumulant formula (formula \eqref{inffmc}). 
One may find easier to make the computations in a 
scarce $\mathcal{C}_k$-noncommutative probability space. 
\\
Taking into account the link \eqref{decompcum} between infinitesimal cumulant functionals and 
$\mathcal{C}_k$-non-crossing cumulant functionals, Proposition \ref{propinfadd} admits the following corollaries :  

\begin{Cor} \label{propadd}
Let $( \mathcal{A} , \tilde \varphi )$ be a scarce $\mathcal{C}_k$-noncommutative probability space. 
Consider subsets $\mathcal{M}_1,\mathcal{M}_2$ of $\mathcal{A}$ 
that are infinitesimally free of order $k$.
Then, one has, for each $n\geq 1$ and each $n$-tuples 
$(a_1,\ldots ,a_n)\in \mathcal{M}_1^n,(b_1,\ldots ,b_n)\in \mathcal{M}_2^n$ 
$$\tilde \kappa _n(a_1+b_1,\ldots ,a_n+b_n)=\tilde \kappa _n(a_1,\ldots ,a_n)+\tilde \kappa _n(b_1,\ldots ,b_n).$$
\end{Cor}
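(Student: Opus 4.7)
The plan is to derive this corollary essentially for free from the definition of infinitesimal freeness of order $k$ (Definition \ref{f}) combined with the multilinearity of the $\mathcal{C}_k$-non-crossing cumulant functionals. The proof is the exact $\mathcal{C}_k$-valued analogue of the usual argument that free additive convolution corresponds to addition of free cumulants, so the structure of the argument mirrors the proof of Proposition \ref{propinfadd} and indeed the statement is also a direct consequence of Proposition \ref{propinfadd} together with the identification \eqref{decompcum}.

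First, I would expand $\tilde{\kappa}_n(a_1 + b_1, \ldots, a_n + b_n)$ by multilinearity of $\tilde{\kappa}_n$ into a sum of $2^n$ terms, one for each choice function $\varepsilon : [n] \to \{1,2\}$:
\begin{equation*}
\tilde{\kappa}_n(a_1 + b_1, \ldots, a_n + b_n) = \sum_{\varepsilon : [n] \to \{1,2\}} \tilde{\kappa}_n(c_1^{\varepsilon}, \ldots, c_n^{\varepsilon}),
\end{equation*}
where $c_j^{\varepsilon} = a_j$ if $\varepsilon(j) = 1$ and $c_j^{\varepsilon} = b_j$ if $\varepsilon(j) = 2$.

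Next, for each choice function $\varepsilon$ which is not constant, I would invoke the vanishing of mixed $\mathcal{C}_k$-cumulants condition (Definition \ref{f}): since $a_j \in \mathcal{M}_1$ and $b_j \in \mathcal{M}_2$, any non-constant $\varepsilon$ produces an argument tuple with at least one entry from $\mathcal{M}_1$ and at least one from $\mathcal{M}_2$, forcing $\tilde{\kappa}_n(c_1^{\varepsilon}, \ldots, c_n^{\varepsilon}) = 0$. Only the two constant choice functions $\varepsilon \equiv 1$ and $\varepsilon \equiv 2$ survive, yielding exactly $\tilde{\kappa}_n(a_1, \ldots, a_n) + \tilde{\kappa}_n(b_1, \ldots, b_n)$.

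There is really no serious obstacle here; the only point to be careful about is that the vanishing of mixed cumulants condition is applied in the $\mathcal{C}_k$-valued setting, where it is built directly into Definition \ref{f}, so no centering argument or further machinery is needed. Alternatively, one may simply observe that, by the decomposition \eqref{decompcum}, the equality sought is obtained by reassembling the $k+1$ scalar identities from Proposition \ref{propinfadd} against the canonical basis $(\tfrac{\varepsilon^i}{i!})_{0\leq i\leq k}$ of $\mathcal{C}_k$.
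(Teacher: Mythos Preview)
Your proposal is correct and matches the paper's approach: the paper derives this corollary from Proposition \ref{propinfadd} via the decomposition \eqref{decompcum}, which is precisely your alternative observation, while your direct multilinearity-plus-vanishing-of-mixed-cumulants argument is exactly the computation the paper invokes (without writing it out) for Proposition \ref{propinfadd} itself.
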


\begin{Cor} 
Let $( \mathcal{A} , \tilde \varphi )$ be a scarce $\mathcal{C}_k$-noncommutative probability space. 
Consider $a,b\in \mathcal{A} $ that are infinitesimally free of order $k$, then 
$$\tilde R_{a+b}=\tilde R_a+\tilde R_b.$$
\end{Cor}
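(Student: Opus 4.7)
The plan is to derive this statement directly from the preceding Corollary \ref{propadd} applied to the constant tuples $(a,\ldots,a)$ and $(b,\ldots,b)$, and then repackage the resulting identities on cumulants as an identity on the generating series $\tilde R_a$.

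More concretely, the first step is to invoke Corollary \ref{propadd} with the singleton subsets $\mathcal{M}_1=\{a\}$ and $\mathcal{M}_2=\{b\}$, which are infinitesimally free of order $k$ by hypothesis. Applying that corollary to the $n$-tuples $(a,\ldots,a)\in \mathcal{M}_1^{\,n}$ and $(b,\ldots,b)\in \mathcal{M}_2^{\,n}$ for each $n\geq 1$ yields
\begin{equation*}
\tilde\kappa_n(a+b,\ldots,a+b) \;=\; \tilde\kappa_n(a,\ldots,a) + \tilde\kappa_n(b,\ldots,b).
\end{equation*}

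The second step is purely formal: multiply both sides by $z^n$ and sum over $n\geq 1$. Recalling the definition of the $\mathcal{C}_k$-valued R-transform,
\begin{equation*}
\tilde R_c(z)=\sum_{n=1}^{\infty}\tilde\kappa_n(c,\ldots,c)\,z^n
\end{equation*}
for any $c\in\mathcal{A}$, the resulting equality of power series in $\Theta_{\mathcal{C}_k}^{(A)}$ is precisely $\tilde R_{a+b}=\tilde R_a+\tilde R_b$.

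There is essentially no obstacle here: once Corollary \ref{propadd} is in hand, the content of the statement is simply that a linear identity at the level of cumulants lifts term-by-term to the generating series. The only minor point to observe is that infinitesimal freeness of the elements $a$ and $b$ is, by Definition \ref{f}, exactly the statement that the singletons $\{a\}$ and $\{b\}$ have vanishing mixed $\mathcal{C}_k$-cumulants, so the hypotheses of Corollary \ref{propadd} are indeed met.
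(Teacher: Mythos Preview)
Your proposal is correct and matches the paper's approach: the paper presents this statement as an immediate corollary of Corollary \ref{propadd} (itself derived from Proposition \ref{propinfadd} via \eqref{decompcum}) without giving an explicit argument, and your two-step derivation---specialize to constant tuples, then sum the resulting cumulant identities into the R-transform---is precisely the intended unwinding.
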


\begin{Rem} 
Using Corollary \ref{propadd}, it is possible to state and prove $\mathcal{C}_k$-valued 
versions of some famous limit theorems of free probability. 
We discuss this without going into the details ; for a more complete discussion 
of limit theorems in free probability of type B, we refer to \cite{pop} and \cite{bs}.
In a scarce $\mathcal{C}_k$-noncommutative probability space $( \mathcal{A} , \tilde \varphi )$, 
consider a sequence $(a_n)_{n\in \NN}\in \mathcal{A} ^\NN$ 
of centered infinitesimally free identically distributed 
$\mathcal{C}_k$-valued noncommutative random variables. Then the moments of the (rescaled 
by a $\frac{1}{\sqrt{N}}$ factor) sum $\frac{1}{\sqrt{N}} \sum _{n=1}^N a_n$ converge 
to a $\mathcal{C}_k$-valued distribution characterized by the vanishing of all of its 
cumulants except the second one : this is the $\mathcal{C}_k$-valued 
version of the free central limit theorem. The distributions that appear 
as limits in the preceding result deserve to be named 
$\mathcal{C}_k$-valued semicircular elements. Their moments may be computed using the 
$\mathcal{C}_k$-valued free moment-cumulant formula. 
Paralelly, a $\mathcal{C}_k$-valued version of the free Poisson theorem 
may also be stated and proved, and thus a $\mathcal{C}_k$-valued 
Poisson distribution may be defined. 
\end{Rem}

\begin{center}
{\bf 4.2 Multiplication of infinitesimally free random variables}
\end{center}

We now investigate the distribution of the product of 
$n$-tuples of noncommutative random variables that are infinitesimally free of order $k$. 
We first focus on a $\mathcal{C}_k$-noncommutative probability space because, 
the combinatorics being the same in this setting as in usual free probability, 
the proofs and results will be straightforward adaptations of the usual ones, 
which can be found in \cite{nsbook} for instance.

\begin{Prop} 
Let $( \mathcal{A} , \tilde \varphi )$ be a scarce $\mathcal{C}_k$-noncommutative probability space. 
Consider subsets $\mathcal{M}_1,\mathcal{M}_2$ of $\mathcal{A}$ 
that are infinitesimally free of order $k$. 
Then, one has, for each $n\geq 1$ and each $n$-tuples 
$(a_1,\ldots ,a_n)\in \mathcal{M}_1^n,(b_1,\ldots ,b_n)\in \mathcal{M}_2^n$ 
\begin{equation} \label{altprod}
\tilde \kappa _n(a_1b_1,\ldots ,a_nb_n)=\sum_{p\in NC^{(A)}(n)} 
\tilde \kappa _p(a_1,\ldots ,a_n) \tilde \kappa _{\mbox{Kr}(p)}(b_1,\ldots ,b_n).
\end{equation}
\end{Prop}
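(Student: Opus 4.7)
The approach is to imitate the classical Nica--Speicher derivation of the cumulants-of-products formula; thanks to commutativity of $\mathcal{C}_k$, every combinatorial step of the scalar-valued proof transfers without change. The two key inputs are Proposition \ref{cwpae} (cumulants whose entries are themselves products) and the vanishing of mixed $\mathcal{C}_k$-cumulants granted by infinitesimal freeness of order $k$ (Definition \ref{f}).

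First I would set $s:=2n$ and introduce $x_{2i-1}:=a_i$, $x_{2i}:=b_i$, so that $a_ib_i=x_{2i-1}x_{2i}$. Proposition \ref{cwpae} then gives
$$\tilde\kappa_n(a_1b_1,\ldots,a_nb_n)=\sum_{\substack{\pi\in NC^{(A)}(2n)\\ \pi\vee\theta=1_{2n}}}\tilde\kappa_\pi(x_1,\ldots,x_{2n}),$$
where $\theta=\{\{1,2\},\{3,4\},\ldots,\{2n-1,2n\}\}$. Since $\mathcal{M}_1$ and $\mathcal{M}_2$ are infinitesimally free of order $k$, every surviving $\pi$ must avoid blocks straddling the odd ($a$-) and the even ($b$-) positions, so it splits as $\pi=\pi_o\sqcup\pi_e$, with $\pi_o$ supported on odd positions and $\pi_e$ on even positions. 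Relabelling odd positions as $[n]$ and even positions as $\overline{[n]}$ turns $\pi_o$ and $\pi_e$ into elements of $NC^{(A)}(n)$, and multiplicativity of $\tilde\kappa_\pi$ factors the summand as $\tilde\kappa_{\pi_o}(a_1,\ldots,a_n)\,\tilde\kappa_{\pi_e}(b_1,\ldots,b_n)$.

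The crux is then the combinatorial claim that, among such split partitions, $\pi\vee\theta=1_{2n}$ if and only if $\pi_e=\mbox{Kr}(\pi_o)$. For one direction, non-crossingness of $\pi_o\sqcup\pi_e$ on $[n]\cup\overline{[n]}$ forces $\pi_e\preceq\mbox{Kr}(\pi_o)$, hence $|\pi_e|\geq|\mbox{Kr}(\pi_o)|$ and by \eqref{nbblocks} $|\pi_o|+|\pi_e|\geq n+1$. Viewing the join with $\theta$ as connectivity of the bipartite graph whose vertices are the blocks of $\pi$ and whose $n$ edges come from the pairs in $\theta$, the condition $\pi\vee\theta=1_{2n}$ requires $|\pi_o|+|\pi_e|\leq n+1$; equality then pins $\pi_e=\mbox{Kr}(\pi_o)$. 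The converse direction rests on the standard fact that when $\pi_e=\mbox{Kr}(\pi_o)$ the bipartite graph above is actually a tree, so the join collapses everything to a single block. Substitution of this dichotomy into the reduced sum delivers the announced formula.

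The principal obstacle is this last combinatorial lemma, but it is a classical identity in the theory of non-crossing partitions, already underlying the complex-valued version of the statement; the $\mathcal{C}_k$-valued setting contributes nothing extra, since $\mathcal{C}_k$ is commutative and Proposition \ref{cwpae} is available verbatim.
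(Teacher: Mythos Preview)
Your proposal is correct and follows essentially the same route as the paper: apply Proposition~\ref{cwpae} to rewrite $\tilde\kappa_n(a_1b_1,\ldots,a_nb_n)$ as a sum over $\pi\in NC^{(A)}(2n)$ with $\pi\vee\theta=1_{2n}$, use vanishing of mixed $\mathcal{C}_k$-cumulants to restrict to partitions splitting into odd and even parts, and identify the constraint $\pi\vee\theta=1_{2n}$ with $\pi_e=\mbox{Kr}(\pi_o)$. If anything, you supply more detail than the paper does on this last combinatorial equivalence (the paper simply asserts it), so your argument is at least as complete.
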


\begin{proof}
Using Proposition \ref{cwpae}, the left-hand side of \eqref{altprod} is equal to 
$$\sum_{ \begin{array}{c}
{\scriptstyle \pi \in NC(2n) \ such} \\
{\scriptstyle that \ \pi \vee \theta = 1_s}
\end{array} }\tilde \kappa _\pi (a_1,b_1,a_2,\ldots ,b_{n-1},a_n,b_n),$$ 
where $\theta $ is the partition $\{\{1,2\},\ldots ,\{2n-1,2n\}\}$.\\
By the vanishing of mixed cumulants condition, 
the only contributing terms are those indexed by 
non-crossing partitions $\pi $ which are reunion 
of a non-crossing partition $p$ of $\{1,3,\ldots ,2n-1\}$ 
and a non-crossing partition $q$ of $\{2,4,\ldots ,2n\}$. 
The condition $\pi \vee \theta = 1_s$ for such a partition $\pi $ 
may be reinterpreted as $q=\mbox{Kr}(p)$ 
(up to the identifications $\{1,3,\ldots ,2n-1\}\leftrightarrow [n]$ 
and $\{2,4,\ldots ,2n\}\leftrightarrow \overline{[n]}$).  
\end{proof}

Switching to the infinitesimal framework, one can state the 
following result. 

\begin{Cor} 
Let $(\mathcal{A}, (\varphi ^{(i)})_{0\leq i\leq k})$ 
be an infinitesimal noncommutative probability space of order $k$, 
and $(a_1,\ldots ,a_n),(b_1,\ldots ,b_n)\in \mathcal{A}^n$ be $n$-tuples 
with respective infinitesimal distributions 
$(\mu ^{(i)})_{0\leq i\leq k}$ and $(\nu ^{(i)})_{0\leq i\leq k}$.\\
If the sets $\{a_1,\ldots ,a_n\}$ and $\{b_1,\ldots ,b_n\}$ are infinitesimally free of order $k$, 
then the infinitesimal distribution of $(a_1b_1,\ldots ,a_nb_n)$ only depends on 
$(\mu ^{(i)})_{0\leq i\leq k}$ and $(\nu ^{(i)})_{0\leq i\leq k}$. 
It is denoted by $(\mu ^{(i)})_{0\leq i\leq k}\boxtimes ^{(k)} (\nu ^{(i)})_{0\leq i\leq k}$
and called the infinitesimal free multiplicative convolution of order $k$
of $(\mu ^{(i)})_{0\leq i\leq k}$ and $(\nu ^{(i)})_{0\leq i\leq k}$.
\end{Cor}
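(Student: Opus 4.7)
The plan is to invoke the preceding Proposition — formula \eqref{altprod} — via the equivalence of Remark \ref{association}, since the combinatorics is transparent in the scarce $\mathcal{C}_k$-framework. First I note that the infinitesimal distribution of order $k$ of any tuple $(c_1,\ldots,c_m)\in \mathcal{A}^m$ with infinitesimal distribution $(\rho^{(i)})_{0\leq i\leq k}$ is equivalent data to its $\mathcal{C}_k$-valued distribution, since \eqref{decomp} applied pointwise yields the identity $\tilde\mu_{(c_1,\ldots,c_m)} = \sum_{i=0}^{k} \rho^{(i)} \varepsilon^i/i!$. It therefore suffices to show that $\tilde\mu_{(a_1b_1,\ldots,a_nb_n)}$ depends only on $\tilde\mu_{(a_1,\ldots,a_n)}$ and $\tilde\mu_{(b_1,\ldots,b_n)}$.

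A generic $\mathcal{C}_k$-valued moment of the product tuple has the form $\tilde\varphi(a_{j_1}b_{j_1}\cdots a_{j_m}b_{j_m})$ for some $j_1,\ldots,j_m\in [n]$. By the free moment-cumulant formula \eqref{fmc} it expands as a sum over $\pi \in NC^{(A)}(m)$ of products of joint cumulants $\tilde\kappa_{|V|}((a_{j_1}b_{j_1},\ldots,a_{j_m}b_{j_m})\mid V)$, one per block $V \in \pi$. Each such block-cumulant is then rewritten using \eqref{altprod}, whose proof carries over verbatim to the sub-tuples $(a_{j_1},\ldots,a_{j_m})\mid V$ and $(b_{j_1},\ldots,b_{j_m})\mid V$: these still lie in $\mathcal{M}_1$ and $\mathcal{M}_2$ respectively and hence remain infinitesimally free of order $k$. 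This produces a sum over $p \in NC^{(A)}(|V|)$ of products $\tilde\kappa_p((a_{j_1},\ldots,a_{j_m})\mid V)\,\tilde\kappa_{\mbox{Kr}(p)}((b_{j_1},\ldots,b_{j_m})\mid V)$.

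By the inverse formula \eqref{invfmc} in $\mathcal{C}_k$, each of these mixed cumulants of the $a$'s is a polynomial over $\mathcal{C}_k$ in the $\mathcal{C}_k$-valued moments of $(a_1,\ldots,a_n)$, and likewise for the $b$'s. Substituting back, every $\mathcal{C}_k$-valued moment of $(a_1b_1,\ldots,a_nb_n)$ becomes a polynomial expression in $\tilde\mu_{(a_1,\ldots,a_n)}$ and $\tilde\mu_{(b_1,\ldots,b_n)}$, and hence in $(\mu^{(i)})_{0\leq i\leq k}$ and $(\nu^{(i)})_{0\leq i\leq k}$. I do not expect a genuine obstacle: the whole point of passing through the $\mathcal{C}_k$-structure is that the combinatorics coincides with usual free probability. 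The only step worth flagging is that \eqref{altprod} must be applied to arbitrary sub-tuples drawn from $\mathcal{M}_1$ and $\mathcal{M}_2$ rather than only the distinguished $n$-tuples; but its proof uses exclusively Proposition \ref{cwpae} and the vanishing of mixed $\mathcal{C}_k$-cumulants, both of which persist under restriction to sub-tuples, so this extension is immediate.
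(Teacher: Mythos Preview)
Your proposal is correct and spells out precisely the argument the paper leaves implicit: the corollary is stated without proof, as an immediate consequence of formula \eqref{altprod} together with the equivalence \eqref{decomp}--\eqref{decompcum} and the moment--cumulant bijection \eqref{fmc}/\eqref{invfmc}. A minor streamlining would be to skip the block-by-block expansion and simply observe that \eqref{altprod}, applied to arbitrary index sequences $j_1,\ldots,j_m\in[n]$, already determines all joint $\mathcal{C}_k$-cumulants of $(a_1b_1,\ldots,a_nb_n)$ from those of the factors, whence the distribution follows by \eqref{fmc}; but your route is equally valid.
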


If $a,b\in \mathcal{A}$ are $\mathcal{C}_k$-noncommutative random variables 
that are infinitesimally free of order $k$ in a scarce $\mathcal{C}_k$-noncommutative 
probability space, the $\mathcal{C}_k$-valued R-transform of $a\cdot b$ is 
$\tilde R_{a\cdot b}=\tilde R_{a}$\framebox[7pt]{$\star$}$_{\mathcal{C}_k}\tilde R_{b}$, 
where \framebox[7pt]{$\star$}$_{\mathcal{C}_k}$ is the version of the boxed convolution operation 
introduced in \cite{ns96} with scalars in $\mathcal{C}_k$. We recall in the next subsection 
the definition and main properties of this operation. 

\begin{center}
{\bf 4.3 Boxed convolution of type A}
\end{center}

An operation on formal power series in several noncommuting indeterminates and with complex coefficients 
is introduced in \cite{ns96}, and called boxed convolution. It is defined as a convolution on the 
lattices of non-crossing partitions (of type A). We recall here this definition, but for series 
in only one variable (for simplicity) and with coefficients in any unital complex algebra. 
This is already the point of view adopted in \cite{bgn}. 

\begin{Def} \label{starbox}
Let $\mathcal{C}$ be a unital commutative algebra over $\CC$.
On $\Theta _{\mathcal{C}}^{(A)}$ we define a binary operation 
\framebox[7pt]{$\star$}$_{\mathcal{C}}^{(A)}$, as follows. 
If $$f(z)=\sum_{n=1}^\infty  \alpha _nz^n \in \Theta _{\mathcal{C}}^{(A)},$$ and 
$$g(z)=\sum_{n=1}^\infty \beta _nz^n \in \Theta _{\mathcal{C}}^{(A)},$$
then $f$\framebox[7pt]{$\star$}$_{\mathcal{C}}^{(A)}g$ is the series $\sum_{n=1}^\infty  \gamma _nz^n$, where 
$$\gamma _m=\sum_{\substack{p\in NC^{(A)}(m)\\p:=\{E_1,\ldots ,E_h\}\\\mbox{Kr}(p):=\{F_1,\ldots ,F_l\}}} 
(\prod_{i=1}^h\alpha _{\mbox{card}(E_i)}) \cdot (\prod_{j=1}^l\beta _{\mbox{card}(F_j)}).$$
\end{Def}

\begin{Rem} \label{multi}
It is obviously possible to define a boxed convolution operation for power series in several 
noncommuting indeterminates and with coefficients in $\mathcal{C}$. 
The formulas are the same as in the case of complex coefficients, which first appeared 
in \cite{ns96} and can also be found in \cite{nsbook}.
\end{Rem}

The operation \framebox[7pt]{$\star$}$_{\mathcal{C}}^{(A)}$ is associative, 
commutative and the series $$\Delta _\mathcal{C}^{(A)}(z):=1_{\mathcal{C}}z$$ is its unit element. 
There is another important series in $\Theta _{\mathcal{C}}^{(A)}$, 
namely $$\zeta _\mathcal{C}^{(A)}(z):=\sum_{n=1}^\infty 1_{\mathcal{C}}z^n.$$
Notice that a series $f\in \Theta _{\mathcal{C}}^{(A)}$ is invertible with respect to 
\framebox[7pt]{$\star$}$_{\mathcal{C}}^{(A)}$ if and only if its coefficient of degree one is 
itself invertible in the algebra $\mathcal{C}$. 
In particular, $\zeta _\mathcal{C}^{(A)}$ is invertible 
with respect to \framebox[7pt]{$\star$}$_{\mathcal{C}}^{(A)}$, 
and its inverse is called the M\"oebius series, and denoted by $\mbox{M\"ob} _\mathcal{C}^{(A)}$. 
The proofs of these claims may be obtained
by a straightforward adaptation of the proofs given in \cite{ns96}. 
The free moment-cumulant relation of free probability 
(and its inverse) may be read at the level of power series : 
more precisely, in a noncommutative probability space $(\mathcal{A} , \varphi )$, 
the moment series and the R-transform of $a\in \mathcal{A} $ 
satisfy the following relations: 
$M_a=R_a$\framebox[7pt]{$\star$}$_{\CC}^{(A)} \zeta _{\CC}^{(A)}$,
$R_a=M_a$\framebox[7pt]{$\star$}$_{\CC}^{(A)} \mbox{M\"ob} _{\CC}^{(A)}$.
These formulas have infinitesimal analogues, as stated in the next proposition. 
It is indeed straightforward to check that, 
in the particular case of single variables, 
the formulas \eqref{fmc} and \eqref{invfmc} may be read 
at the level of power series as follows : 

\begin{Prop} 
Let $( \mathcal{A} , \tilde \varphi )$ be a scarce $\mathcal{C}_k$-noncommutative probability space 
and consider a $\mathcal{C}_k$-noncommutative random variable $a\in \mathcal{A}$.
Then the $\mathcal{C}_k$-valued moment series $\tilde M_a$ and 
the $\mathcal{C}_k$-valued R-transform $\tilde R_a$ of $a$ are related by the equivalent formulas : 
$\tilde M_a=\tilde R_a$\framebox[7pt]{$\star$}$_{\mathcal{C}_k}^{(A)} \zeta _{\mathcal{C}_k}^{(A)}$,
$\tilde R_a=\tilde M_a$\framebox[7pt]{$\star$}$_{\mathcal{C}_k}^{(A)} \mbox{M\"ob} _{\mathcal{C}_k}^{(A)}.$
\end{Prop}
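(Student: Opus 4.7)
The plan is to verify the first identity $\tilde M_a = \tilde R_a \, \framebox[7pt]{$\star$}_{\mathcal{C}_k}^{(A)} \, \zeta_{\mathcal{C}_k}^{(A)}$ by comparing coefficients on both sides, and then to obtain the second identity essentially for free by exploiting the fact that $\mbox{M\"ob}_{\mathcal{C}_k}^{(A)}$ was defined as the $\framebox[7pt]{$\star$}_{\mathcal{C}_k}^{(A)}$-inverse of $\zeta_{\mathcal{C}_k}^{(A)}$.

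First I would unwind the definition of $\framebox[7pt]{$\star$}_{\mathcal{C}_k}^{(A)}$ (Definition \ref{starbox}) applied to $\tilde R_a$ and $\zeta_{\mathcal{C}_k}^{(A)}$. Since every coefficient of $\zeta_{\mathcal{C}_k}^{(A)}$ equals $1_{\mathcal{C}_k}$, the product $\prod_{j=1}^l \beta_{\mathrm{card}(F_j)}$ over the blocks of $\mbox{Kr}(p)$ collapses to $1_{\mathcal{C}_k}$, so the coefficient of $z^m$ in $\tilde R_a \, \framebox[7pt]{$\star$}_{\mathcal{C}_k}^{(A)} \, \zeta_{\mathcal{C}_k}^{(A)}$ reduces to
$$\sum_{\substack{p \in NC^{(A)}(m)\\ p := \{E_1,\ldots,E_h\}}} \prod_{i=1}^{h} \tilde\kappa_{\mathrm{card}(E_i)}(a,\ldots,a) \; = \; \sum_{p \in NC^{(A)}(m)} \tilde\kappa_p(a,\ldots,a),$$
where in the last equality I used the notation for $\tilde\kappa_p$ introduced just before Definition of $\mathcal{C}_k$-non-crossing cumulants. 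By the free moment-cumulant formula \eqref{fmc} applied to $a_1 = \cdots = a_m = a$, this sum is exactly $\tilde\varphi(a^m)$, which is the coefficient of $z^m$ in $\tilde M_a$. Matching coefficients for all $m \geq 1$ yields the first identity.

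For the second identity, I would invoke the general fact (recalled in the paragraph following Remark \ref{multi}) that $\framebox[7pt]{$\star$}_{\mathcal{C}_k}^{(A)}$ is associative and commutative, with unit $\Delta_{\mathcal{C}_k}^{(A)} = 1_{\mathcal{C}_k} z$, and that $\mbox{M\"ob}_{\mathcal{C}_k}^{(A)}$ is the inverse of $\zeta_{\mathcal{C}_k}^{(A)}$. Composing the first identity on the right with $\mbox{M\"ob}_{\mathcal{C}_k}^{(A)}$ gives
$$\tilde M_a \, \framebox[7pt]{$\star$}_{\mathcal{C}_k}^{(A)} \, \mbox{M\"ob}_{\mathcal{C}_k}^{(A)} \; = \; \tilde R_a \, \framebox[7pt]{$\star$}_{\mathcal{C}_k}^{(A)} \, \zeta_{\mathcal{C}_k}^{(A)} \, \framebox[7pt]{$\star$}_{\mathcal{C}_k}^{(A)} \, \mbox{M\"ob}_{\mathcal{C}_k}^{(A)} \; = \; \tilde R_a \, \framebox[7pt]{$\star$}_{\mathcal{C}_k}^{(A)} \, \Delta_{\mathcal{C}_k}^{(A)} \; = \; \tilde R_a,$$
which is the desired inverse formula.

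The argument is essentially bookkeeping; there is no genuine obstacle, since all the combinatorial content is already packaged in \eqref{fmc} and in the algebraic properties of $\framebox[7pt]{$\star$}_{\mathcal{C}_k}^{(A)}$ inherited from the classical type-A case (which the paper has just asserted go through verbatim, the coefficients lying in the commutative unital algebra $\mathcal{C}_k$ rather than in $\CC$). The only minor point requiring a moment of attention is that Definition \ref{starbox} sums over pairs $(p, \mbox{Kr}(p))$ rather than arbitrary pairs of non-crossing partitions, but this matches exactly with $\zeta_{\mathcal{C}_k}^{(A)}$ contributing a factor of $1_{\mathcal{C}_k}$ for each block of $\mbox{Kr}(p)$, so no adjustment is needed.
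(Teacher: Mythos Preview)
Your proof is correct and matches exactly what the paper has in mind: the paper does not give a written-out proof here but simply states that ``it is straightforward to check that, in the particular case of single variables, the formulas \eqref{fmc} and \eqref{invfmc} may be read at the level of power series''. Your argument is precisely that verification --- unwinding Definition \ref{starbox} with $\zeta_{\mathcal{C}_k}^{(A)}$ to recover \eqref{fmc}, and then inverting via $\mbox{M\"ob}_{\mathcal{C}_k}^{(A)}$ --- so there is nothing to add.
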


The importance of boxed convolution (with complex coefficients) 
in free probability also comes from the fact, 
proved in \cite{ns96}, that \framebox[7pt]{$\star$}$_{\CC}^{(A)}$ 
provides the combinatorial description 
for the multiplication of two free noncommutative random variables, 
in terms of their R-transforms.
More precisely, we have, for free $a,b$ in a 
noncommutative probability space $(\mathcal{A} , \varphi )$ :
$R_{a\cdot b}=R_a$\framebox[7pt]{$\star$}$_{\CC}^{(A)}R_b$.

Interestingly, \framebox[7pt]{$\star$}$_{\mathcal{C}_k}^{(A)}$ also 
provides the combinatorial description 
for the multiplication of two infinitesimally free infinitesimal
noncommutative random variables, in terms of their R-transforms. 

\begin{Prop} 
Let $( \mathcal{A} , \tilde \varphi )$ be a scarce $\mathcal{C}_k$-noncommutative probability space. 
Consider $a,b\in \mathcal{A} $ that are infinitesimally free of order $k$, then
\center{$\tilde R_{a\cdot b} = \tilde R_a$ \framebox[7pt]{$\star$}$_{\mathcal{C}_k}^{(A)}\tilde R_b$.}
\end{Prop}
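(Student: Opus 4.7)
The plan is to recognize that this proposition is essentially a reading of the multiplication formula \eqref{altprod} at the level of formal power series, mirroring exactly how the classical identity $R_{a\cdot b}=R_a\,\framebox[7pt]{$\star$}_{\CC}^{(A)}\,R_b$ is derived in usual free probability. The whole point of working inside the scarce $\mathcal{C}_k$-framework is that the combinatorics is literally unchanged from the scalar case; all sums and products happen in the commutative unital algebra $\mathcal{C}_k$, so the argument from \cite{ns96} transfers verbatim.

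Concretely, first I would specialize \eqref{altprod} to the single-variable situation $a_1=\cdots =a_n=a$ and $b_1=\cdots =b_n=b$, obtaining
\[
\tilde\kappa_n(ab,\ldots,ab)=\sum_{p\in NC^{(A)}(n)}\tilde\kappa_p(a,\ldots,a)\,\tilde\kappa_{\mathrm{Kr}(p)}(b,\ldots,b).
\]
Writing $\tilde\alpha_n:=\tilde\kappa_n(a,\ldots,a)$ and $\tilde\beta_n:=\tilde\kappa_n(b,\ldots,b)$ for the coefficients of $\tilde R_a$ and $\tilde R_b$, and using the definition of $\tilde\kappa_p$ as the product of $\tilde\kappa_{|V|}$ over blocks $V$ of $p$, this rewrites as
\[
\tilde\kappa_n(ab,\ldots,ab)=\sum_{\substack{p\in NC^{(A)}(n)\\ p=\{E_1,\ldots,E_h\}\\ \mathrm{Kr}(p)=\{F_1,\ldots,F_l\}}}\Bigl(\prod_{i=1}^h\tilde\alpha_{|E_i|}\Bigr)\Bigl(\prod_{j=1}^l\tilde\beta_{|F_j|}\Bigr).
\]

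Next I would compare this formula with Definition \ref{starbox} applied to $f=\tilde R_a$ and $g=\tilde R_b$ inside $\Theta_{\mathcal{C}_k}^{(A)}$: the right-hand side above is exactly the $n$-th coefficient of $\tilde R_a\,\framebox[7pt]{$\star$}_{\mathcal{C}_k}^{(A)}\,\tilde R_b$. Since the $n$-th coefficient of $\tilde R_{a\cdot b}$ is by definition $\tilde\kappa_n(ab,\ldots,ab)$, the two power series in $\Theta_{\mathcal{C}_k}^{(A)}$ agree coefficient by coefficient, yielding the announced identity.

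There is no real obstacle here: commutativity of $\mathcal{C}_k$ makes the products over blocks well defined and unambiguous, and the only nontrivial input, the vanishing of mixed $\mathcal{C}_k$-cumulants for infinitesimally free pairs, has already been packaged into the proof of \eqref{altprod}. The mild point to double-check is the bookkeeping that Definition \ref{starbox} indeed lists blocks of $p$ and of $\mathrm{Kr}(p)$ in the same way as the multilinear functionals $\tilde\kappa_p$ and $\tilde\kappa_{\mathrm{Kr}(p)}$ do, but this is just a matter of matching notation.
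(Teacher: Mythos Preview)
Your proposal is correct and matches the paper's approach exactly: the paper does not write out a separate proof for this proposition, treating it as the immediate specialization of formula \eqref{altprod} to $a_1=\cdots=a_n=a$, $b_1=\cdots=b_n=b$ followed by recognition of the $n$-th coefficient of the boxed convolution from Definition \ref{starbox}. Your explicit identification of the coefficients and the remark that commutativity of $\mathcal{C}_k$ makes the block-products unambiguous are precisely the bookkeeping the paper leaves to the reader.
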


In \cite{ns97}, a "Fourier transform" is introduced for multiplicative functions 
on non-crossing partitions. It is barely a map $\mathcal{F}$ which associates to 
$f(z)=\sum_{n=1}^\infty  \alpha _nz^n\in \Theta _{\CC}^{(A)}$, with $\alpha _1\not=0$ 
(to ensure that $f$ is invertible with respect to the composition of formal power series), 
the series $\mathcal{F}(f)(z):=\frac{1}{z}f^{\langle -1\rangle}(z)$. 
The map $\mathcal{F}$ has the important property to convert the boxed convolution into 
the multiplication of formal power series : 
$\mathcal{F}(f$\framebox[7pt]{$\star$}$_{\CC}^{(A)}g)=\mathcal{F}(f)\cdot \mathcal{F}(g)$. 
If $a$ is a noncommutative random variable with non-zero mean and R-transform $R_a$ 
in a noncommutative probability space, the series $\mathcal{F}(R_a)$ is of great importance : 
this is a combinatorial approach to Voiculescu's S-tranform \cite{voi2}. 
As noticed in \cite{pop}, the combinatorial proofs remain valid for series with 
$\mathcal{C}_k$-valued coefficients such that the coefficient of degree one is invertible. 

\begin{Def} 
Let $( \mathcal{A} , \tilde \varphi )$ be a scarce $\mathcal{C}_k$-noncommutative probability space. 
The $\mathcal{C}_k$-valued S-transform of an infinitesimal noncommutative 
random variable $a\in \mathcal{A} $ such that $\tilde \varphi (a)$ is invertible in $\mathcal{C}_k$
is the power series $\tilde S_a\in \Theta ^{(k)}$ defined as follows:
$$\tilde S_a(z):=\frac{1}{z}\tilde R_a^{\langle -1\rangle}(z).$$
\end{Def}

\begin{Prop} 
Let $( \mathcal{A} , \tilde \varphi )$ be a scarce $\mathcal{C}_k$-noncommutative probability space. 
Consider $a,b\in \mathcal{A} $ that are infinitesimally free of order $k$, and such that 
$\tilde \varphi (a)$ and $\tilde \varphi (b)$ are invertible in $\mathcal{C}_k$, then
the $\mathcal{C}_k$-valued S-transform $\tilde S_{a\cdot b}$ of $a\cdot b$ satisfies: 
$$\tilde S_{a\cdot b}(z)=\tilde S_a(z) \tilde S_b(z).$$
\end{Prop}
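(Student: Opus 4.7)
The plan is to reduce the statement to the multiplicativity of the Fourier-type transform $\mathcal{F}$ on top of the product formula for the R-transform already established just before. All the ingredients are in place; it is really a matter of assembling them and checking that the compositional inverses involved actually exist in the $\mathcal{C}_k$-valued setting.

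First, I would verify well-definedness. Since $\tilde\kappa_1(a)=\tilde\varphi(a)$ and $\tilde\kappa_1(b)=\tilde\varphi(b)$ are by assumption invertible in $\mathcal{C}_k$, the coefficients of degree one of $\tilde R_a$ and $\tilde R_b$ are invertible, so both series admit compositional inverses in $\Theta_{\mathcal{C}_k}^{(A)}$; hence $\tilde S_a$ and $\tilde S_b$ are defined. Next, applying Proposition \ref{cwpae} (or equivalently the product formula \eqref{altprod} specialized to $n=1$) gives $\tilde\varphi(a\cdot b)=\tilde\varphi(a)\tilde\varphi(b)$, which is invertible as a product of invertibles in the commutative algebra $\mathcal{C}_k$; thus $\tilde S_{a\cdot b}$ is also defined.

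The core of the argument then goes as follows. By the preceding proposition, infinitesimal freeness of $a$ and $b$ gives
$$\tilde R_{a\cdot b}=\tilde R_a\,\framebox[7pt]{$\star$}_{\mathcal{C}_k}^{(A)}\,\tilde R_b.$$
I would then appeal to the $\mathcal{C}_k$-valued version of the Fourier transform $\mathcal{F}(f)(z):=\frac{1}{z}f^{\langle -1\rangle}(z)$, whose validity on $\mathcal{C}_k$-coefficient series with invertible degree-one coefficient is the content of the remark following Definition \ref{starbox} (and is the point noted in \cite{pop}). Its key property is the conversion of boxed convolution into ordinary multiplication of formal power series:
$$\mathcal{F}\bigl(f\,\framebox[7pt]{$\star$}_{\mathcal{C}_k}^{(A)}\,g\bigr)=\mathcal{F}(f)\cdot\mathcal{F}(g).$$
Since by definition $\tilde S_x=\mathcal{F}(\tilde R_x)$, applying $\mathcal{F}$ to both sides of the R-transform identity yields exactly $\tilde S_{a\cdot b}(z)=\tilde S_a(z)\,\tilde S_b(z)$.

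The only point that requires any care, and the one I would treat as the main obstacle, is the transfer of the scalar results on $\mathcal{F}$ (existence of compositional inverses, the identity $\mathcal{F}(f\,\framebox[7pt]{$\star$}\,g)=\mathcal{F}(f)\mathcal{F}(g)$) to coefficients in the non-field algebra $\mathcal{C}_k$. Invertibility of a power series under composition requires invertibility, not just non-vanishing, of its degree-one coefficient, so the hypothesis that $\tilde\varphi(a)$ and $\tilde\varphi(b)$ lie in $\mathcal{C}_k^{\times}$ is indispensable. Once this invertibility is in hand, the combinatorial identities underlying $\mathcal{F}$ hold coefficient by coefficient in any unital commutative $\CC$-algebra, so the argument from \cite{ns97} goes through verbatim in $\mathcal{C}_k$, and the proposition follows.
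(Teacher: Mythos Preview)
Your proposal is correct and follows exactly the approach the paper takes: the paper does not give an explicit proof of this proposition but rather remarks, citing \cite{pop}, that the combinatorial proofs of \cite{ns97} for the Fourier transform $\mathcal{F}$ remain valid for $\mathcal{C}_k$-valued series with invertible degree-one coefficient, and the result then follows immediately from $\tilde R_{a\cdot b}=\tilde R_a\,\framebox[7pt]{$\star$}_{\mathcal{C}_k}^{(A)}\,\tilde R_b$. Your additional verification that $\tilde\varphi(a\cdot b)$ is invertible (so that $\tilde S_{a\cdot b}$ is defined) is a welcome detail the paper leaves implicit.
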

\

Practically speaking, the computation of the distribution of 
the product of two infinitesimally free infinitesimal noncommutative 
random variables requires a good understanding of the $\mathcal{C}_k$-valued 
version of the boxed convolution. 
More precisely, in the notations of Definition \ref{starbox}, 
it would be of interest to have a formula for $\gamma _m^{(i)}$ 
as a function of the $\alpha _n^{(j)}$'s and the $\beta _n^{(j)}$'s.\\ 
As mentioned before, the version of the boxed convolution with scalars in $\mathcal{C}_0=\CC$ 
is a classical operation in free probability. 
The version of the boxed convolution with scalars in $\mathcal{C}_1=\mathbb{G}$ has 
already been considered in \cite{bgn}, 
where it is shown to coincide with the boxed 
convolution based on non-crossing partitions of type B, 
in connection with free probability of type B. 
This leads to the natural question : does the operation 
\framebox[7pt]{$\star$}$_{\mathcal{C}_k}^{(A)}$ coincide with a boxed convolution 
based on a certain set of special non-crossing partitions. 
In Section 7, we will give a positive answer to this problem, 
by introducing the non-crossing partitions of type $k$.
Before that, we review some background on non-crossing partitions and boxed convolution
of type B.

\begin{center} 
{\bf\large 5. Non-crossing partitions and boxed convolution of type B}
\end{center}

This section is devoted to some background on non-crossing partitions 
of type B and on the boxed convolution of type B. 

\begin{center}
{\bf 5.1 Non-crossing partitions of type B}
\end{center}

As recalled in Section 2, there is a close link between the lattice of non-crossing 
partitions and the Cayley graph of the symmetric group. 
Actually, one may interpret the lattices of non-crossing partitions 
in terms of the root systems of type A, 
justifying the notation $NC^{(A)}(n)$. 
This led Reiner to introduce  in \cite{rei} 
the type B analogue $NC^{(B)}(n)$ of the lattice of non-crossing partitions. 
To this aim, consider the totally ordered set $$[\pm n]=\{1<2<\ldots <n<-1<-2<\ldots <-n\}.$$
One defines $NC^{(B)}(n)$ to be the subset of $NC^{(A)}([\pm n])$ 
consisting of non-crossing partitions that are invariant under the inversion map $x\mapsto -x$.\\
In such a partition $\pi \in NC^{(B)}(n)$, there is at most one block that is inversion-invariant, 
called, whenever it exists, the zero-block of $\pi $.
The other blocks of $\pi $ come two by two: if $F$ is a block which is not inversion-invariant, 
then $-F$ is another block (obviously not inversion-invariant).\\
It is immediate that $NC^{(B)}(n)$ is a sublattice of $NC^{(A)}([\pm n])$, 
with the same minimal and maximal elements.\\
Moreover, $NC^{(B)}(n)$ is closed under the Kreweras complementation maps 
$\mbox{Kr}$ and $\mbox{Kr}'$ (considered on $NC^{(A)}([\pm n])$).\\
When restricted from $NC^{(A)}([\pm n])$ to $NC^{(B)}(n)$, 
these maps will then give two anti-isomorphisms of $NC^{(B)}(n)$, 
inverse to each other, and which will also be called (without ambiguity) 
Kreweras complementation maps (on $NC^{(B)}(n)$).
In this case, the important relation \eqref{nbblocks} becomes
$$| \pi |+| \mbox{Kr}(\pi ) |=2n+1, \forall \pi \in NC^{(B)}(n).$$
As a consequence, for $\pi \in NC^{(B)}(n)$, 
exactly one of the two partitions $\pi $ and $\mbox{Kr}(\pi )$ has a zero-block. 
In the description of a non-crossing partition of type B, a role is played by the absolute value map 
$\mbox{Abs} : [\pm n] \longrightarrow [n]$ sending $\pm i$ to $i$.

\begin{Not} 
Any map $f$ defined from $[m]$ into $[n]$ is naturally extended 
to a map from $[m]\cup \overline{[m]}$ into $[n]\cup \overline{[n]}$ 
by simply requiring that $f(\overline{i})=\overline{f(i)}$.\\
Moreover, if $Y$ is a subset of $[m]\cup \overline{[m]}$, 
we will use the notation $f(Y)$ for the set 
$\{f(y), y\in Y\} \subset [n]\cup \overline{[n]}$.\\
Finally, given a collection $\Upsilon $ of subsets of $[m]\cup \overline{[m]}$, 
we will denote by $f(\Upsilon )$ the collection 
$\{f(Y), Y\in \Upsilon \}$ of subsets of $[n]\cup \overline{[n]}$.
\end{Not}

Let us now state a key result of \cite{bgn}.

\begin{Theo} 
$\pi \mapsto \mbox{Abs}(\pi )$ is a $(n+1)$-to-$1$ map from $NC^{(B)}(n)$ onto $NC^{(A)}(n)$.
\end{Theo}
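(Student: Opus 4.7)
The plan is to count the fibre $\mbox{Abs}^{-1}(p)$ over a fixed $p \in NC^{(A)}(n)$ by splitting preimages according to whether they carry a zero-block, and to use the Kreweras complement (together with the type-A identity \eqref{nbblocks}) to reduce the zero-blockless case to the zero-block case for $\mbox{Kr}(p)$.

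First I would check that $\mbox{Abs}$ genuinely maps into $NC^{(A)}(n)$ and is onto. If $\mbox{Abs}(\pi)$ had a crossing $a<b<c<d$ in $[n]$ with $a \sim c$, $b \sim d$ and $a \not\sim b$, I lift to $\pi$: using inversion-invariance of $\pi$ to negate whole blocks if necessary, I may arrange $a \sim_\pi \epsilon_c c$ and $b \sim_\pi \epsilon_d d$ for some $\epsilon_c, \epsilon_d \in \{\pm 1\}$. A case analysis on the four sign patterns, combined in the mixed cases with one further use of inversion-invariance to pass to the mirror block $-W$, produces four elements of $[\pm n]$ in linear order that exhibit a real block-crossing of $\pi$, contradicting $\pi \in NC^{(A)}([\pm n])$. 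Surjectivity is immediate: for any $p \in NC^{(A)}(n)$, the partition $\pi := \{V,\,-V : V \in p\}$ is inversion-invariant, lies in the two disjoint intervals $[n]$ and $-[n]$ of $[\pm n]$, hence is non-crossing, and satisfies $\mbox{Abs}(\pi) = p$.

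Fix $p$ and split $\mbox{Abs}^{-1}(p) = \mathcal{F}_0 \sqcup \mathcal{F}_1$ according to whether $\pi$ contains a zero-block. For $\pi \in \mathcal{F}_0$ the zero-block is inversion-invariant, so it equals $V_0 \cup (-V_0)$ with $V_0 := \mbox{Abs}(Z) \in \mbox{bl}(p)$. I claim $\pi \mapsto V_0$ is a bijection $\mathcal{F}_0 \to \mbox{bl}(p)$, giving $|\mathcal{F}_0| = |p|$. Once $V_0$ is fixed, the zero-block $Z$ carves the cyclic picture of $[\pm n]$ into gaps, and every other block $V \in \mbox{bl}(p)$ must be lifted to a pair $\{W, -W\}$ each sitting inside a single gap of $Z$; the linear position of $V$ relative to $V_0$ inside $p$ forces which gap $W$ belongs to, and hence forces a unique signing of $V$. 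An induction on $|p|$ peeling off innermost blocks proves existence (consistency of all the forced signings) and uniqueness of the lift simultaneously.

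For $\pi \in \mathcal{F}_1$, the non-zero blocks come in inversion-pairs so $|\pi|$ is even; applying \eqref{nbblocks} inside $NC^{(A)}([\pm n])$ yields $|\pi| + |\mbox{Kr}(\pi)| = 2n+1$, forcing $|\mbox{Kr}(\pi)|$ odd and hence $\mbox{Kr}(\pi)$ to have a zero-block. The key technical lemma I need here is the naturality identity $\mbox{Abs}(\mbox{Kr}(\pi)) = \mbox{Kr}(\mbox{Abs}(\pi))$, which follows from inversion-invariance of $\mbox{Kr}(\pi)$ (inherited from $\pi$ by maximality, since inversion preserves the non-crossing relation on $[\pm n] \cup \overline{[\pm n]}$) together with a direct comparison of the maximality characterizations of $\mbox{Kr}$ in types A and B. This makes $\pi \mapsto \mbox{Kr}(\pi)$ a bijection from $\mathcal{F}_1$ onto the zero-block preimages of $\mbox{Kr}(p)$, which by the previous paragraph has cardinality $|\mbox{Kr}(p)|$. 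Summing gives $|\mbox{Abs}^{-1}(p)| = |p| + |\mbox{Kr}(p)|$, which equals $n+1$ by one last appeal to \eqref{nbblocks}. The main obstacle is the block-by-block reconstruction in $\mathcal{F}_0$: proving that the locally forced signings globally assemble into a non-crossing partition of $[\pm n]$; once this is established, the naturality of $\mbox{Abs}$ with respect to $\mbox{Kr}$ is a bookkeeping compatibility and the counting drops out.
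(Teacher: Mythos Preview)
The paper does not supply its own proof here; it simply refers the reader to \cite{bgn}. Your proposal is correct and recovers precisely the argument of \cite{bgn}: parametrize the preimages of $p$ by the blocks of $p \cup \mbox{Kr}(p)$, splitting according to whether the zero-block lies in $\pi$ or in $\mbox{Kr}(\pi)$, then invoke the naturality $\mbox{Abs}\circ\mbox{Kr}=\mbox{Kr}\circ\mbox{Abs}$ (Lemma~1 of \cite{bgn}, your ``key technical lemma'') together with the identity $|p|+|\mbox{Kr}(p)|=n+1$.
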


We refer to the paper \cite{bgn} for the proof. In the next subsection, we recall the definition 
of the type B analogue of the boxed convolution operation and give the announced result stating 
that this operation is a boxed convolution of type A on the algebra $\mathcal{C}_1$.

\begin{center}
{\bf 5.2 Boxed convolution of type B}
\end{center}

\begin{Def} 
\begin{enumerate}
\item We denote by $\Theta ^{(B)}$ the set of power series of the form 
$$f(z)=\sum_{n=1}^\infty  (\alpha _n',\alpha _n'')z^n,$$ 
where the $\alpha _n'$'s and $\alpha _n''$'s are complex numbers.
\item Let $f(z):=\sum_{n=1}^\infty (\alpha _n',\alpha _n'')z^n$ and 
$g(z)=\sum_{n=1}^\infty (\beta _n',\beta _n'')z^n$ be in $\Theta ^{(B)}$.
For every $m\geq 1$, consider the numbers $\gamma _m'$ and $\gamma _m''$ defined by
$$\gamma _m'=\sum_{\substack{p\in NC^{(A)}(m)\\p:=\{E_1,\ldots ,E_h\}\\\mbox{Kr}(p):=\{F_1,\ldots ,F_l\}}} 
(\prod_{i=1}^h\alpha _{\mbox{card}(E_i)}') \cdot (\prod_{j=1}^l\beta _{\mbox{card}(F_j)}'),$$
$$\gamma _m''=\sum_{\substack{p\in NC^{(B)}(m)\,with\,zero-block\\p:=\{Z,X_1,-X_1,\ldots ,X_h,-X_h\}\\
\mbox{Kr}(p):=\{Y_1,-Y_1,\ldots ,Y_l,-Y_l\}}} (\prod_{i=1}^h\alpha _{\mbox{card}(X_i)}') \cdot 
\alpha _{\mbox{card}(Z)/2}'' \cdot (\prod_{j=1}^l\beta _{\mbox{card}(Y_j)}')$$
$$+ \sum_{\substack{p\in NC^{(B)}(m)\,without\,zero-block\\p:=\{X_1,-X_1,\ldots ,X_h,-X_h\}\\\mbox{Kr}(p)
:=\{Z,Y_1,-Y_1,\ldots ,Y_l,-Y_l\}}} (\prod_{i=1}^h\alpha _{\mbox{card}(X_i)}') \cdot \beta _{\mbox{card}(Z)/2}'' 
\cdot (\prod_{j=1}^l\beta _{\mbox{card}(Y_j)}').$$
Then the series $\sum_{n=1}^\infty  (\gamma _n',\gamma _n'')z^n$ is called the boxed convolution of type B of $f$ and $g$, 
and is denoted by $f$ \framebox[7pt]{$\star$}$^{(B)} g$.
\end{enumerate}
\end{Def}

\begin{Theo} \cite{bgn} Theorem 5.3
\framebox[7pt]{$\star$}$^{(B)}=$\framebox[7pt]{$\star$}$_{\mathcal{C}_1}^{(A)}$
\end{Theo}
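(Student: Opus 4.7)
The plan is to identify $\Theta^{(B)}$ with $\Theta_{\mathcal{C}_1}^{(A)}$ via the correspondence $(\alpha_n',\alpha_n'') \leftrightarrow \alpha_n' + \alpha_n''\varepsilon$ in the canonical basis $(1,\varepsilon)$ of $\mathcal{C}_1$. Under this identification, it suffices to check, for every $m\geq 1$, that the $m$-th coefficient of $f\framebox[7pt]{$\star$}^{(B)}g$ agrees component by component with that of $f\framebox[7pt]{$\star$}_{\mathcal{C}_1}^{(A)}g$.

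First I would compute the $m$-th coefficient of $f\framebox[7pt]{$\star$}_{\mathcal{C}_1}^{(A)}g$ and decompose it in the basis $(1,\varepsilon)$. Writing $\alpha_n = \alpha_n' + \alpha_n''\varepsilon$ and $\beta_n = \beta_n' + \beta_n''\varepsilon$ and expanding the product $\prod_i \alpha_{|E_i|}\prod_j\beta_{|F_j|}$ using $\varepsilon^2=0$, the $(0)$-component is obtained by taking the primed entry in every factor, while the $(1)$-component equals the sum over the choice of a single factor contributing its doubly-primed entry, all the others contributing their primed entries. Reading this off, the $(0)$-part of the $m$-th coefficient coincides with $\gamma_m'$ term by term, and the $(1)$-part equals
\begin{equation*}
\sum_{p\in NC^{(A)}(m)}\left[\sum_{k=1}^h \alpha''_{|E_k|}\prod_{i\neq k}\alpha'_{|E_i|}\prod_j\beta'_{|F_j|} \;+\; \sum_{k=1}^l \beta''_{|F_k|}\prod_i \alpha'_{|E_i|}\prod_{j\neq k}\beta'_{|F_j|}\right].
\end{equation*}

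To match this with $\gamma_m''$, I would invoke the $(m+1)$-to-$1$ map $\mbox{Abs}: NC^{(B)}(m)\to NC^{(A)}(m)$ recalled just above, together with the identity $|\pi|+|\mbox{Kr}(\pi)|=2m+1$ on $NC^{(B)}(m)$, which forces exactly one of $\pi$ and $\mbox{Kr}(\pi)$ to carry the zero block. For fixed $p\in NC^{(A)}(m)$, the preimages split into two classes: those with a zero block $Z$ are parametrized by a choice of block $E_k$ of $p$, the zero block being forced to be $E_k\cup(-E_k)$ (so $|Z|/2=|E_k|$); and those without a zero block are parametrized, via Kreweras, by a choice of block $F_k$ of $\mbox{Kr}(p)$. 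This accounts for $|p|+|\mbox{Kr}(p)|=m+1$ preimages. Substituting these bijections into the marked-block sum above, and using $|X_i|=|\mbox{Abs}(X_i)|$ for the non-zero paired blocks, recovers exactly the two sums defining $\gamma_m''$.

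The main obstacle is the combinatorial input on which the matching rests: the bijective description of the preimages of $\mbox{Abs}$ and the verification that the Kreweras complement interchanges the ``with zero block'' and ``without zero block'' classes while corresponding to $p \leftrightarrow \mbox{Kr}(p)$ downstairs. Once this is granted (it is part of the type B combinatorics of \cite{bgn}), the identification of coefficients is a routine bookkeeping exercise.
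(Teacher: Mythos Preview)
The paper does not supply its own proof of this theorem: it is quoted verbatim as Theorem~5.3 of \cite{bgn} and left unproved here. Your argument is correct and is essentially the one given in \cite{bgn}: expand the $\mathcal{C}_1$-valued product using $\varepsilon^2=0$ to split into a $(0)$-part and a $(1)$-part, then reindex the $(1)$-part via the $(m+1)$-to-$1$ cover $\mbox{Abs}:NC^{(B)}(m)\to NC^{(A)}(m)$, whose fibre over $p$ is parametrized by a choice of a block of $p\cup\mbox{Kr}(p)$ to serve as the absolute value of the zero block. You have correctly isolated the only nontrivial step, namely the bijective description of the fibres and the compatibility of $\mbox{Kr}$ with $\mbox{Abs}$, and correctly attributed it to \cite{bgn}.

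It is worth noting that the present paper does prove the generalization $\framebox[7pt]{$\star$}^{(k)}=\framebox[7pt]{$\star$}_{\mathcal{C}_k}^{(A)}$ (Theorem~\ref{starboxthm}), by the same mechanism you describe but with $\mbox{Abs}$ replaced by the cover $\mbox{Red}_n^{(i)}:NC^{(i)}(n)\to NC^{(A)}(n)$ and the ``choice of zero block'' replaced by the shape $\lambda_\pi\in\Lambda_{n+1,i}$ together with the counting factor $r(\lambda_\pi)$. Specializing that proof to $k=1$ recovers exactly your argument.
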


We now introduce the non-crossing partitions of type $k$, 
generalizing non-crossing partitions of type A and B. 

\begin{center} 
{\bf\large 6. Non-crossing partitions of type $k$}
\end{center}

This section is devoted to the introduction and study of a 
set of non-crossing partitions, namely the set of non-crossing partitions of type $k$, 
which has to be a cover of $NC{(A)}(n)$ 
related to the version of the boxed convolution with scalars 
in $\mathcal{C}_k$.

\begin{center}
{\bf 6.1 Definition and first properties}
\end{center}

\begin{Def} 
Let $n$ be a positive integer.
We call {\em reduction mod $n$ map} the map $$\mbox{Red}_n^{(k)} : [(k+1)n] \rightarrow [n]$$
sending each $i\in [(k+1)n]$ to its congruence class mod $n$.
\end{Def}

\begin{Rem} 
For $k=0$, the map $\mbox{Red}_n^{(0)}$ is simply the identity map on $[n]$.\\
For $k=1$, up to identifying $[2n]$ with $[\pm n]$, the map $\mbox{Red}_n^{(1)}$ 
is identified with $\mbox{Abs}$.
\end{Rem}

\begin{Def} \label{redprop}
A non-crossing partition $\pi $ of $[(k+1)n]$ is said to satisfy {\em the mod $n$ reduction property} if 
$\mbox{Red}_n^{(k)}(\pi )$ is a non-crossing partition of $[n]$ and if 
$\mbox{Red}_n^{(k)}(\mbox{Kr}(\pi ))$ is a non-crossing partition of $\overline{[n]}$.
\end{Def}

{\em Non-crossing partitions of type $k$} are the non-crossing partitions of $[(k+1)n]$ 
satisfying the mod $n$ reduction property. 

\begin{Def} 
We write $NC^{(k)}(n)$ for the set of non-crossing partitions of type $k$, 
that is non-crossing partitions of $[(k+1)n]$ satisfying the mod $n$ reduction property.
\end{Def}

\begin{Rem} 
All non-crossing partitions of $[n]$ trivially satisfy the mod $n$ reduction property 
(since $\mbox{Red}_n^{(0)}$ is simply the identity map).
Hence $NC^{(0)}(n)=NC^{(A)}(n)$.
\end{Rem}

The next proposition states that the non-crossing partitions of type $k$ are a 
generalization of the non-crossing partitions of type B.

\begin{Prop} 
If we identify $[\pm n]$ with $[2n]$ and $\mbox{Abs}$ with $\mbox{Red}_n^{(1)}$,\\
then $NC^{(B)}(n)=NC^{(1)}(n)$.
\end{Prop}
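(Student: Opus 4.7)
The plan is to prove both inclusions, after recording the following dictionary: under the identification $[\pm n] \cong [2n]$ given by $-i \leftrightarrow n+i$, the involution $x \mapsto -x$ corresponds to the shift $\sigma : j \mapsto j+n \pmod{2n}$ on $[2n]$, and $\mbox{Abs}$ becomes $\mbox{Red}_n^{(1)}$. In particular $NC^{(B)}(n) = \{ \pi \in NC^{(A)}(2n) : \sigma(\pi) = \pi \}$.

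For the inclusion $NC^{(B)}(n) \subseteq NC^{(1)}(n)$, I would simply cite the ingredients recalled in Section 5.1. The BGN theorem gives $\mbox{Red}_n^{(1)}(\pi) = \mbox{Abs}(\pi) \in NC^{(A)}(n)$ for every $\pi \in NC^{(B)}(n)$. Since $NC^{(B)}(n)$ is stable under the Kreweras complementation, applying the same result to $\mbox{Kr}(\pi)$ (viewed on the barred copy) yields $\mbox{Red}_n^{(1)}(\mbox{Kr}(\pi)) \in NC^{(A)}(\overline{[n]})$, and both defining conditions of $NC^{(1)}(n)$ are satisfied.

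The substance is in the reverse inclusion $NC^{(1)}(n) \subseteq NC^{(B)}(n)$: given $\pi \in NC^{(1)}(n)$, I must produce the $\sigma$-invariance from the hypothesis that the reduction is a partition. The plan is to use only the partition condition, which translates into the statement that for any two distinct blocks $B, B'$ of $\pi$ the sets $\mbox{Red}_n^{(1)}(B)$ and $\mbox{Red}_n^{(1)}(B')$ are either equal or disjoint. A short argument, using only that blocks of $\pi$ are themselves disjoint and that $\mbox{Red}_n^{(1)}$ is $2$-to-$1$ with fibers $\{y, \sigma(y)\}$, shows that if two distinct blocks $B, B'$ share the same image then $\sigma(B) = B'$, and that no image can come from more than two blocks.

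From this, for any block $B$ and any $y \in B$, the block $B''$ containing $\sigma(y)$ has the same reduction as $B$, so $B'' \in \{B, \sigma(B)\}$; the mixed scenario in which this holds as $B$ for some $y$ and as $\sigma(B) \neq B$ for other $y$'s is excluded by $B \cap \sigma(B) = \emptyset$, so $\sigma(B)$ is always a block of $\pi$ and $\sigma(\pi) = \pi$. The main obstacle is precisely this bookkeeping on fibers of $\mbox{Red}_n^{(1)}$ and the "at most two blocks per image" dichotomy; the non-crossing part of the first condition and the Kreweras condition in Definition \ref{redprop} play no role in this direction when $k=1$, being automatic consequences of the partition condition combined with the BGN theorem.
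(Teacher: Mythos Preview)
Your proof is correct and follows essentially the same route as the paper. For $NC^{(B)}(n)\subseteq NC^{(1)}(n)$ both you and the paper simply invoke the BGN result together with stability of $NC^{(B)}(n)$ under Kreweras complementation; for the reverse inclusion both arguments rest on the single fact that $\mbox{Red}_n^{(1)}(\pi)$ is a \emph{partition}, combined with the $2$-to-$1$ fiber structure $\{y,\sigma(y)\}$. The paper compresses this into a two-line contradiction (assume $x\sim_\pi y$ and $-x\not\sim_\pi -y$, then chase fibers to force $-y\sim_\pi x\sim_\pi y\sim_\pi -x$), whereas you unfold it into the structural dichotomy ``distinct blocks with equal image are $\sigma$-translates, and at most two blocks share an image'', which is the same counting argument made explicit. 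Your closing observation that neither the non-crossing part of the first condition nor the Kreweras condition is needed for this direction when $k=1$ is exactly the content of Remark~\ref{B}.
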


\begin{proof}
That $\pi \in NC^{(B)}(n)$ satisfies the mod $n$ reduction property 
is a corollary of Proposition 1.3 and Lemma 1 in \cite{bgn}.\\
For the converse, let $\pi \in NC^{(1)}(n)$ satisfy the mod $n$ reduction property, 
and assume that there exist two elements $x,y\in [\pm n]$ 
such that $x\sim _\pi y$ , $-x\not\sim _\pi -y$. 
By reduction mod $n$ property, we necessarily have $-y\sim _\pi x\sim _\pi y\sim _\pi -x$, which is a contradiction. 
\end{proof}

\begin{Rem} \label{B}
The proof above and Lemma 1 in \cite{bgn} show that, for a non-crossing partition $\pi $ 
of $[2n]$, the mod $n$ reduction property is equivalent to the only requirement that $\mbox{Red}_n^{(1)}(\pi )$ 
is a non-crossing partition of $[n]$.
\end{Rem}

In Definition \ref{redprop}, the reduction mod $n$ property for a non-crossing partition $\pi $ of $[(k+1)n]$
consists of two requirements : $\mbox{Red}_n^{(k)}(\pi )$ has to be a non-crossing partition of $[n]$ 
and $\mbox{Red}_n^{(k)}(\mbox{Kr}(\pi ))$ has to be a non-crossing partition of $\overline{[n]}$. 
Actually, there is a slightly stronger characterization stated in the next proposition.

\begin{Prop} \label{charac}
A non-crossing partition $\pi $ of $[(k+1)n]$ satisfies the reduction mod $n$ property if and only if 
$\mbox{Red}_n^{(k)}(\pi \cup \mbox{Kr}(\pi ))$ is a non-crossing partition of $[n]\cup \overline {[n]}$.
\end{Prop}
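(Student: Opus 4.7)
The plan is to handle the two implications separately; the forward direction is essentially bookkeeping, whereas the converse reduces to one clean lifting argument.

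For the forward implication, I would observe that the blocks of $\pi$ are contained in $[(k+1)n]$ and the blocks of $\mbox{Kr}(\pi)$ in $\overline{[(k+1)n]}$, so the collection $\mbox{Red}_n^{(k)}(\pi \cup \mbox{Kr}(\pi))$ splits cleanly into a part lying in $[n]$ (namely $\mbox{Red}_n^{(k)}(\pi)$) and a part lying in $\overline{[n]}$ (namely $\mbox{Red}_n^{(k)}(\mbox{Kr}(\pi))$). Any non-crossing partition of $[n]\cup\overline{[n]}$ restricts to a non-crossing partition of either side, which is exactly the content of the reduction mod $n$ property.

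For the converse, I would assume that both $\mbox{Red}_n^{(k)}(\pi)$ and $\mbox{Red}_n^{(k)}(\mbox{Kr}(\pi))$ are non-crossing partitions of their respective sides. Their disjoint union is automatically a partition of $[n]\cup\overline{[n]}$, and crossings internal to each half are forbidden by hypothesis, so only ``mixed'' crossings need to be excluded: a block $V' = \mbox{Red}_n^{(k)}(V)$ with $V\in\pi$ and a block $W' = \mbox{Red}_n^{(k)}(W)$ with $W\in\mbox{Kr}(\pi)$ crossing each other. Arguing by contradiction, such a crossing would give witnesses $a,c\in V'$ and $b,d\in W'$ with $a<b<c<d$ in the interleaved order on $[n]\cup\overline{[n]}$.

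The heart of the proof, and the step I expect to be the only substantive one, is the following observation. Because $\mbox{Red}_n^{(k)}(\pi)$ is a \emph{partition} of $[n]$, the reductions of distinct blocks of $\pi$ are disjoint; hence for every $x\in[n]$, all the elements of $[(k+1)n]$ lying over $x$ belong to the unique block of $\pi$ whose reduction contains $x$. Applied to $x=a$ and $x=c$, this forces every lift of $a$ and of $c$ to lie in $V$; similarly every lift of $b$ and $d$ lies in $W$. In particular, the trivial lifts $a,c\in[n]\subseteq[(k+1)n]$ and $b,d\in\overline{[n]}\subseteq\overline{[(k+1)n]}$ are bona fide elements of $V$ and $W$, and the interleaved order on the big set extends that on the small set, so the inequality $a<b<c<d$ persists. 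This exhibits a genuine crossing of $V$ and $W$ inside $\pi\cup\mbox{Kr}(\pi)$, contradicting the very definition of $\mbox{Kr}(\pi)$ as a partition producing a non-crossing union. The main obstacle is really just noticing that the partition hypothesis pins down all lifts to a single block, making the lift to the cover completely trivial; without that hypothesis, the lifts of $a$ could be distributed among several blocks of $\pi$ and the argument would collapse, which is why both halves of the reduction property are genuinely needed.
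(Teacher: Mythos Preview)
Your forward direction is fine and matches the paper's argument. The converse, however, has a genuine gap in the lifting step.

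You claim that because $\mbox{Red}_n^{(k)}(\pi)$ is a partition of $[n]$, the reductions of \emph{distinct} blocks of $\pi$ are disjoint, and hence every lift of a point $x\in[n]$ lies in a single block of $\pi$. This is false. The partition condition on $\mbox{Red}_n^{(k)}(\pi)$ only says that the \emph{distinct sets} appearing in $\{\mbox{Red}_n^{(k)}(V):V\in\pi\}$ are pairwise disjoint; it does not prevent two different blocks $V_1\neq V_2$ of $\pi$ from having the \emph{same} reduction. This already happens for $k=1$: take $n=2$ and $\pi=\{\{1,4\},\{2,3\}\}\in NC^{(1)}(2)$, where both blocks reduce to $\{1,2\}$, yet the trivial lifts $1$ and $2$ lie in different blocks of $\pi$. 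More generally, every non-zero-block pair $\{V,-V\}$ of a type $B$ partition exhibits this phenomenon. So your assertion that the trivial lifts $a,c$ automatically land in $V$ (and $b,d$ in $W$) is unjustified, and without it the crossing you exhibit in $[n]\cup\overline{[n]}$ does not lift to a crossing of $V$ and $W$ in $[(k+1)n]\cup\overline{[(k+1)n]}$.

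The paper circumvents this by a different device: rather than lifting a hypothetical crossing, it works directly with the blocks of $\pi\cup\mbox{Kr}(\pi)$ ordered by the total order $\sqsubset$, chooses the $\sqsubset$-minimal block $\mbox{Mix}(\pi,i_0)$ meeting the fibres over $\{a,b,c,d\}$, and uses minimality together with the non-crossing property upstairs to force a lift of $b$ to fall into that same block. This argument never needs the (false) uniqueness of the block over a given point.
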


\begin{proof}
If $\mbox{Red}_n^{(k)}(\pi \cup \mbox{Kr}(\pi ))$ is a non-crossing partition of $[n]\cup \overline {[n]}$, 
since $\mbox{Red}_n^{(k)}(\pi )$ is a family of subsets of $[n]$ and $\mbox{Red}_n^{(k)}(\mbox{Kr}(\pi ))$ 
is a family of subsets of $\overline{[n]}$, they have to be non-crossing partitions of $[n]$ and $\overline{[n]}$ 
respectively ; in other words $\pi $ has to satisfy the reduction mod $n$ property.\\
We assume now that $\pi $ is a non-crossing partition of $[(k+1)n]$ satisfying the reduction mod $n$ property, 
and aim at proving that $\mbox{Red}_n^{(k)}(\pi \cup \mbox{Kr}(\pi ))$ 
is a non-crossing partition of $[n]\cup \overline {[n]}$.\\
By the reduction property, $\mbox{Red}_n^{(k)}(\pi \cup \mbox{Kr}(\pi ))=
\mbox{Red}_n^{(k)}(\pi )\cup \mbox{Red}_n^{(k)}(\mbox{Kr}(\pi ))$ 
is the union of a partition of $[n]$ and of a partition of $\overline{[n]}$, 
and hence a partition of $[n]\cup \overline {[n]}$. 
To prove that this partition is non-crossing, consider four elements 
$a<b<c<d$ of $[n]\cup \overline {[n]}$, such that 
$a\sim _{\mbox{Red}_n^{(k)}(\pi \cup \mbox{Kr}(\pi ))}c$ and 
$b\sim _{\mbox{Red}_n^{(k)}(\pi \cup \mbox{Kr}(\pi ))}d$. 
We have to show that $a\sim _{\mbox{Red}_n^{(k)}(\pi \cup \mbox{Kr}(\pi ))}b$.\\
Let $1\leq i_0\leq (k+1)n+1$ be minimal with the property that $\mbox{Mix}(\pi ,i_0)$ contains 
an element $x$ such that $\mbox{Red}_n^{(k)}(x)\in \{a,b,c,d\}$. 
Choose also the smallest such $x$. 
We may assume that $\mbox{Red}_n^{(k)}(x)=a$ (the other cases are similar). 
By assumption, $c\in \mbox{Red}_n^{(k)}(\mbox{Mix}(\pi ,i_0))$ : 
there is an element $z\in \mbox{Mix}(\pi ,i_0)$ such that $\mbox{Red}_n^{(k)}(z)=c$. 
Our choice of $x$ ensures that $x<z$ and there is 
necessarily an element $x<y<z$ such that $\mbox{Red}_n^{(k)}(y)=b$.
By minimality of $i_0$, $y\in \mbox{Mix}(\pi ,i_0)$, hence $b\in \mbox{Red}_n^{(k)}(\mbox{Mix}(\pi ,i_0))$
is linked to $a$ in ${\mbox{Red}_n^{(k)}(\pi \cup \mbox{Kr}(\pi ))}$ and we are done.
\end{proof}

\begin{Rem} 
When $k=0$, the reduction mod $n$ property is 
satisfied by any non-crossing partition of $[n]$ 
and is in particular equivalent to the only empty requirement : 
$\pi \in NC^{(A)}(n)$ satisfies the reduction mod $n$ property if and 
only if $\mbox{Red}_n^{(0)}(\pi )$ is a non-crossing partition of $[n]$.\\
As explained in Remark \ref{B}, this is also the case when $k=1$ : 
$\pi \in NC^{(A)}(2n)$ satisfies the reduction mod $n$ property if and 
only if $\mbox{Red}_n^{(1)}(\pi )$ is a non-crossing partition of $[n]$.\\ 
Assume now that $k\geq 2$ ; the situation then is different.\\
As an example, for $k=2$ and $n=2$, consider the partition 
$$\pi :=\{\{1,2,3\},\{4,5,6\}\}\in NC^{(A)}(6).$$ 
It is straightforward to check that $\mbox{Red}_2^{(2)}(\pi )=\{1,2\}$ is a non-crossing partition 
of $[2]$. However, from the easy computation 
$\mbox{Kr}(\pi )=\{\{\overline{1}\},\{\overline{2}\},\{\overline{4}\},\{\overline{5}\},\{\overline{3},\overline{6}\}\}$, 
we deduce that $\mbox{Red}_2^{(2)}(\mbox{Kr}(\pi ))$ is not a partition of $\overline{[6]}$ and consequently 
that $\pi $ does not satisfy the reduction mod $2$ property.
\end{Rem}

The following proposition states that the Kreweras complementation maps 
may be considered as two order-reversing bijections of $NC^{(k)}(n)$.

\begin{Prop} 
The restrictions from $NC^{(A)}((k+1)n)$ to $NC^{(k)}(n)$ of $\mbox{Kr}$ and $\mbox{Kr}'$ 
are order-reversing bijections of $NC^{(k)}(n)$.
\end{Prop}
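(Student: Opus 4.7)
The plan is to reduce the statement to a single stability claim: that $\mbox{Kr}$ sends $NC^{(k)}(n)$ into itself. Once this is proved, all assertions follow: $\mbox{Kr}$ and $\mbox{Kr}'$ are already order-reversing bijections of the ambient lattice $NC^{(A)}((k+1)n)$, and $\mbox{Kr}'=\mbox{Kr}^{-1}$; finiteness of $NC^{(k)}(n)$ combined with injectivity of $\mbox{Kr}$ promotes the set-theoretic inclusion $\mbox{Kr}(NC^{(k)}(n))\subseteq NC^{(k)}(n)$ to a bijection, the restriction of $\mbox{Kr}'$ is its inverse (and a fortiori also stabilizes $NC^{(k)}(n)$), and the order-reversing property is inherited from the ambient lattice.

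To verify the stability, I would check the two clauses of Definition \ref{redprop} for $\mbox{Kr}(\pi)$, given $\pi\in NC^{(k)}(n)$. The first clause, that $\mbox{Red}_n^{(k)}(\mbox{Kr}(\pi))$ is a non-crossing partition of $[n]$, is exactly the second clause of the hypothesis $\pi\in NC^{(k)}(n)$, once one identifies $\overline{[n]}$ with $[n]$ in the canonical way. For the second clause one must show that $\mbox{Red}_n^{(k)}(\mbox{Kr}^2(\pi))$ is a non-crossing partition of $\overline{[n]}$. Here I would invoke the geometric description recalled in the excerpt: $\mbox{Kr}^2(\pi)$ is the anti-clockwise rotation of $\pi$ by $\frac{2\pi}{(k+1)n}$, so its blocks are the images of the blocks of $\pi$ under the cyclic shift $i\mapsto i-1\pmod{(k+1)n}$. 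Because $n$ divides $(k+1)n$, this shift commutes with $\mbox{Red}_n^{(k)}$ in the sense that reducing and then shifting by $1$ mod $n$ equals shifting by $1$ mod $(k+1)n$ and then reducing. Consequently $\mbox{Red}_n^{(k)}(\mbox{Kr}^2(\pi))$ is the cyclic shift of $\mbox{Red}_n^{(k)}(\pi)$ on $[n]$; since rotations preserve non-crossingness on a circle and $\mbox{Red}_n^{(k)}(\pi)\in NC^{(A)}(n)$ by hypothesis, the desired non-crossing property holds.

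The main obstacle I anticipate is bookkeeping rather than substance: one must track carefully which copy of $[n]$ (or of $\overline{[n]}$) each partition lives on, and verify that the "rotation" description of $\mbox{Kr}^2$ truly descends to a rotation of $\mbox{Red}_n^{(k)}(\pi)$. A slightly slicker alternative, if the intertwining with $\mbox{Red}_n^{(k)}$ is handled carefully, is to appeal directly to the characterization of Proposition \ref{charac}: the pair $(\mbox{Kr}(\pi),\mbox{Kr}^2(\pi))$ on the interleaved set $[(k+1)n]\cup\overline{[(k+1)n]}$ is precisely the one-step rotate of $(\pi,\mbox{Kr}(\pi))$, so its reduction is a rotate of $\mbox{Red}_n^{(k)}(\pi\cup\mbox{Kr}(\pi))$ on $[n]\cup\overline{[n]}$ and hence still non-crossing, which is exactly the criterion for $\mbox{Kr}(\pi)\in NC^{(k)}(n)$.
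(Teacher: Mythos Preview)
Your proposal is correct and follows essentially the same approach as the paper: reduce to the stability claim that $\mbox{Kr}$ preserves $NC^{(k)}(n)$, note that the first clause of Definition~\ref{redprop} for $\mbox{Kr}(\pi)$ is the second clause of the hypothesis on $\pi$, and handle the second clause by invoking the geometric description of $\mbox{Kr}^2$ as a rotation to conclude that $\mbox{Red}_n^{(k)}(\mbox{Kr}^2(\pi))$ is a rotate of $\mbox{Red}_n^{(k)}(\pi)$ and hence non-crossing. Your write-up is in fact more explicit than the paper's about why the rotation descends through $\mbox{Red}_n^{(k)}$ and why stability plus finiteness yields bijectivity.
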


The name of Kreweras complementation map and the notations $\mbox{Kr}$, $\mbox{Kr}'$ 
will be conserved as there is no ambiguity about the meaning of $\mbox{Kr}(\pi )$ or $\mbox{Kr}'(\pi )$ 
whether $\pi $ is viewed as an element of $NC^{(k)}(n)$ or of $NC^{(A)}((k+1)n)$.

\begin{proof}
It is clearly sufficient to prove that the non-crossing partition $\mbox{Kr}(\pi )$ 
of $\overline{[(k+1)n]}$ satisfies the reduction mod $n$ property whenever $\pi $ does. 
Assume that the non-crossing partition $\pi $ of $[(k+1)n]$ satisfies the reduction mod $n$ property. 
By assumption, $\mbox{Red}_n^{(k)}(\mbox{Kr}(\pi ))$ is a non-crossing partition of $\overline{[n]}$. 
It remains to prove that $\mbox{Red}_n^{(k)}(\mbox{Kr}^2(\pi ))$ 
is a non-crossing partition of $\overline{\overline{[n]}}$.\\
From the geometric description of $\mbox{Kr}^2(\pi )$ given in Section 3, 
we deduce that $\mbox{Red}_n^{(k)}(\mbox{Kr}^2(\pi ))$ is obtained 
from $\mbox{Red}_n^{(k)}(\pi )$ by a rotation. 
By reduction mod $n$ property, $\mbox{Red}_n^{(k)}(\pi )$ 
is a non-crossing partition of $[n]$, 
so $\mbox{Red}_n^{(k)}(\mbox{Kr}^2(\pi ))$ is itself 
a non-crossing partition of $\overline{\overline{[n]}}$. 
Thus the proof is complete.
\end{proof}

Given $\pi \in NC^{(k)}(n)$, $\mbox{Kr}(\mbox{Red}_n^{(k)}(\pi ))$ and $\mbox{Red}_n^{(k)}(\mbox{Kr}(\pi ))$ 
are thus two non-crossing partitions of $\overline{[n]}$.
The following lemma, generalizing Lemma 1 of \cite{bgn}, states that these two partitions coincide.

\begin{Prop} \label{comm}
$\forall \pi \in NC^{(k)}(n), \mbox{Kr}(\mbox{Red}_n^{(k)}(\pi ))=\mbox{Red}_n^{(k)}(\mbox{Kr}(\pi ))$.
\end{Prop}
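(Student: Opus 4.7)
First, I would set $p := \mbox{Red}_n^{(k)}(\pi)$ and $q := \mbox{Red}_n^{(k)}(\mbox{Kr}(\pi))$, and observe via Proposition \ref{charac} that $\sigma := p \cup q$ is a non-crossing partition of $[n]\cup\overline{[n]}$ which cleanly splits into $p$ on $[n]$ and $q$ on $\overline{[n]}$. Since $\mbox{Kr}(p)$ is by definition the biggest partition of $\overline{[n]}$ (for the $\preceq$ order) such that $p \cup \mbox{Kr}(p)$ is non-crossing on $[n]\cup\overline{[n]}$, this immediately yields the refinement $q \preceq \mbox{Kr}(p)$, which is one half of the claimed equality.

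To upgrade the refinement into an equality, I plan to show $|p|+|q| = n+1$. This suffices, because equation \eqref{nbblocks} applied to $p$ gives $|p|+|\mbox{Kr}(p)| = n+1$, so combined with $q \preceq \mbox{Kr}(p)$ the block-count equality forces $q = \mbox{Kr}(p)$. The inequality $|p|+|q|\geq n+1$ is already at hand from $q\preceq\mbox{Kr}(p)$, and may alternatively be obtained through the following counting argument on $\sigma$: each block $W$ of $\sigma$ receives at most $k+1$ blocks of $\pi\cup\mbox{Kr}(\pi)$, because those blocks partition the preimage of $W$ (of cardinality $(k+1)|W|$) and each of them surjects onto $W$ (hence has at least $|W|$ elements); summing, $(k+1)n+1 = |\pi|+|\mbox{Kr}(\pi)| \leq (k+1)|\sigma|$, whence $|\sigma|\geq n+1$.

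The substantive step is the reverse inequality $|p|+|q|\leq n+1$. My plan is to argue by contradiction: assuming $q\prec\mbox{Kr}(p)$ strictly, there would exist distinct blocks $B_1,B_2$ of $q$ sitting inside a common block $B$ of $\mbox{Kr}(p)$; choosing $V_1,V_2\in\mbox{Kr}(\pi)$ with $\mbox{Red}_n^{(k)}(V_j)=B_j$, I would show that replacing $V_1,V_2$ by the merged block $V_1\cup V_2$ produces a partition of $\overline{[(k+1)n]}$ whose union with $\pi$ remains non-crossing on $[(k+1)n]\cup\overline{[(k+1)n]}$, contradicting the maximality characterizing $\mbox{Kr}(\pi)$. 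The main obstacle is precisely this lifting step: one must verify that the merging creates no crossing with any block of $\pi$ by transferring the non-separability of $B_1,B_2$ by $p$ on the small circle into non-separability of appropriate representatives of $V_1,V_2$ by $\pi$ on the big circle, tracking cyclic orderings carefully through the $(k+1)$-fold reduction cover. An alternative I would also consider is to replace this geometric lifting with a more combinatorial argument producing, for each block of $\mbox{Kr}(p)$, a single block of $\mbox{Kr}(\pi)$ whose reduction contains it, via the explicit description of blocks of $\mbox{Kr}(p)$ and $\mbox{Kr}(\pi)$ as $\sqsubset$-ordered gaps of $p$ and $\pi$ respectively.
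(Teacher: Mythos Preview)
Your first paragraph is exactly how the paper begins: invoke Proposition~\ref{charac} to see that $p\cup q$ is non-crossing on $[n]\cup\overline{[n]}$, and conclude $q\preceq\mbox{Kr}(p)$ from the maximality defining the Kreweras complement. The block-count reduction you propose in the second paragraph is also sound: once $q\preceq\mbox{Kr}(p)$, equality is equivalent to $|q|\leq|\mbox{Kr}(p)|$.

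The gap is in your third paragraph. You correctly identify the lifting step as the ``main obstacle'', but you do not carry it out, and the specific mechanism you sketch --- merge two arbitrary lifts $V_1,V_2\in\mbox{Kr}(\pi)$ of $B_1,B_2$ and contradict the maximality of $\mbox{Kr}(\pi)$ --- is not obviously correct: for a poorly chosen pair $V_1,V_2$ the merged block $V_1\cup V_2$ may well cross some block of $\pi$, since a block $U$ of $\pi$ can separate $V_1$ from $V_2$ on the big circle even though $\mbox{Red}_n^{(k)}(U)$ does not separate $B_1$ from $B_2$ on the small one (the reduction can collapse the separating arc). So one must explain \emph{which} lifts to pick, and that explanation is the real content.

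The paper bypasses this entirely with a short direct argument that is essentially your ``alternative'' executed at the level of individual neighbours rather than whole blocks. Given $\overline{x}$ with neighbour $\overline{y}>\overline{x}$ in $\mbox{Kr}(p)$, it looks at the block $V$ of $\pi$ containing $x+1$; the reduction property forces $x+1=\min V=\min\mbox{Red}_n^{(k)}(V)$ and $y=\max\mbox{Red}_n^{(k)}(V)$. Taking $z:=\max V$, the Kreweras construction links $\overline{x}$ to $\overline{z}$ in $\mbox{Kr}(\pi)$, hence $\overline{x}\sim_q\overline{\mbox{Red}_n^{(k)}(z)}$. Since $\overline{\mbox{Red}_n^{(k)}(z)}$ lies between $\overline{x}$ and $\overline{y}$ and is linked to $\overline{x}$ in $\mbox{Kr}(p)$, the neighbour condition forces $\mbox{Red}_n^{(k)}(z)=y$, so $\overline{x}\sim_q\overline{y}$. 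This three-line computation replaces your merging-and-maximality detour and requires no careful choice of lifts. If you try to complete your main plan honestly, you will find yourself reconstructing exactly this argument to locate the ``appropriate representatives'' --- at which point you already have $\overline{x}\sim_q\overline{y}$ and the merging step becomes superfluous.
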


\begin{proof}
Let $\pi$ be a non-crossing partition of type $k$. 
By Proposition \ref{charac}, $\mbox{Red}_n^{(k)}(\pi )\cup \mbox{Red}_n^{(k)}(\mbox{Kr}(\pi ))=
\mbox{Red}_n^{(k)}(\pi \cup \mbox{Kr}(\pi ))$ is a non-crossing partition of $[n]\cup \overline {[n]}$.
Since $\mbox{Kr}(\mbox{Red}_n^{(k)}(\pi ))$ is maximal with the property that 
$\mbox{Red}_n^{(k)}(\pi )\cup \mbox{Kr}(\mbox{Red}_n^{(k)}(\pi ))$ is non-crossing, 
it follows that $$\mbox{Red}_n^{(k)}(\mbox{Kr}(\pi ))\preceq \mbox{Kr}(\mbox{Red}_n^{(k)}(\pi )).$$
There is equality if, for any $\overline x$ having a neighbour 
$\overline y>\overline x$ in $\mbox{Kr}(\mbox{Red}_n^{(k)}(\pi ))$, 
$\overline y$ is linked to $\overline x$ in $\mbox{Red}_n^{(k)}(\mbox{Kr}(\pi ))$.
For such elements $\overline x, \overline y\in \overline{[n]}$, 
we call $V$ the block of $\pi $ containing $x+1$. 
The reduction property implies that $\mbox{Red}_n^{(k)}(V)$ is a block of the partition $\mbox{Red}_n^{(k)}(\pi )$.
By construction of the Kreweras complement, $x+1$ is the smallest element of both $V$ and $\mbox{Red}_n^{(k)}(V)$, 
and $y$ is the greatest element of $\mbox{Red}_n^{(k)}(V)$. 
Consider now the greatest element $z$ of $V$. 
Notice that $x+1\leq \mbox{Red}_n^{(k)}(z)\leq y$. 
By construction of the Kreweras complement again, $\overline x$ is linked to $\overline z$ in $\mbox{Kr}(\pi )$, 
then $\overline x$ is linked to $\mbox{Red}_n^{(k)}(\overline z)=\overline {\mbox{Red}_n^{(k)}(z)}$ 
in $\mbox{Red}_n^{(k)}(\mbox{Kr}(\pi ))$ and therefore in $\mbox{Kr}(\mbox{Red}_n^{(k)}(\pi ))$. 
This means that, if $\mbox{Red}_n^{(k)}(z)<y$, $\overline y$ would not be the neighbour of $\overline x$ in 
$\mbox{Kr}(\mbox{Red}_n^{(k)}(\pi ))$, which is a contradiction.
So $\mbox{Red}_n^{(k)}(z)=y$ or, in other words, 
$\overline x$ is linked to $\overline y$ in $\mbox{Red}_n^{(k)}(\mbox{Kr}(\pi ))$. 
\end{proof}

A deeper description of non-crossing partitions of type $k$ is given in the next subsection.

\begin{center}
{\bf 6.2 Structure of non-crossing partitions of type $k$}
\end{center}

The goal of this subsection is to describe the structure of a non-crossing partition of type $k$. 
In the next proposition, $t$ denotes the bijection between non-crossing partitions and permutations 
lying on a geodesic in the Cayley graph of the symmetric group, introduced by Biane in \cite{bia}, 
and described in Section 3. 
We warn the reader that we choose to use the same notation $t$ for this bijection, 
defined either on $NC(n)$ or $NC((k+1)n)$. We hope that this choice, made in the sake of simplicity, 
will not be a source of confusion in the reader's mind. The content of this proposition is, roughly speaking, 
that a type $k$ non-crossing partition $\pi $ is characterized by the two requirements : 
$\mbox{Red}_n^{(k)}(\pi )$ is a non-crossing partition of $NC^{(A)}(n)$ and 
the elements of each of the blocks of $\pi $ come in the same order as 
their congruence classes in its reduction $\mbox{Red}_n^{(k)}(\pi )$. 

\begin{Prop} \label{biane}
For $\pi \in NC^{(A)}((k+1)n)$ such that $\mbox{Red}_n^{(k)}(\pi )\in NC^{(A)}(n)$, 
$\pi \in NC^{(k)}(n)$ if and only if 
\begin{equation} \label{shift}
\forall x\in [(k+1)n], \mbox{Red}_n^{(k)}(t(\pi )(x))=t(\mbox{Red}_n^{(k)}(\pi ))(\mbox{Red}_n^{(k)}(x)).
\end{equation}
\end{Prop}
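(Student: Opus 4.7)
The plan is to recast condition \eqref{shift} as an intertwining relation between permutations. Set $T:=\mbox{Red}_n^{(k)}$, $\sigma:=t(\pi)$, and $\tau:=t(\mbox{Red}_n^{(k)}(\pi))$; then \eqref{shift} is exactly the equality $T\sigma=\tau T$ of maps from $[(k+1)n]$ to $[n]$. Two elementary inputs will do the work throughout. First, $T$ intertwines the long cycles, $T\circ c_{(k+1)n}=c_n\circ T$, which is immediate from the very definition of reduction mod $n$. Second, the classical rewriting of the Kreweras complement at the level of Biane's bijection, namely $t(\mbox{Kr}(p))=t(p)^{-1}\circ c_m$, valid for every $p\in NC^{(A)}(m)$ under the canonical identification $\overline{i}\leftrightarrow i$.

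For the ``if'' direction, I assume \eqref{shift}. Combining $T\sigma=\tau T$ with $Tc_{(k+1)n}=c_nT$ gives $T\circ(\sigma^{-1}c_{(k+1)n})=(\tau^{-1}c_n)\circ T$, which by the second input rewrites as
$$T\circ t(\mbox{Kr}(\pi))=t(\mbox{Kr}(\mbox{Red}_n^{(k)}(\pi)))\circ T.$$
Any intertwining between two permutations sends orbits of the first into orbits of the second, so $T$ maps blocks of $\mbox{Kr}(\pi)$ into blocks of $\mbox{Kr}(\mbox{Red}_n^{(k)}(\pi))$. Hence $\mbox{Red}_n^{(k)}(\mbox{Kr}(\pi))$ is a partition of $\overline{[n]}$ and coincides with the non-crossing partition $\mbox{Kr}(\mbox{Red}_n^{(k)}(\pi))$, so the mod $n$ reduction property holds and $\pi\in NC^{(k)}(n)$.

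For the ``only if'' direction, I assume $\pi\in NC^{(k)}(n)$. By Proposition \ref{comm} one has $T(\mbox{Kr}(\pi))=\mbox{Kr}(T(\pi))$, and by Proposition \ref{charac} the partition $T(\pi)\cup\mbox{Kr}(T(\pi))$ of $[n]\cup\overline{[n]}$ is non-crossing. The goal is to promote this block-level correspondence to the dynamical identity $T\sigma=\tau T$. I fix $x$ in a block $V$ of $\pi$, put $y=\sigma(x)$ and $V':=T(V)$, and argue by contradiction: if $T(y)$ were not the cyclic successor of $T(x)$ in $V'$, some $r^\ast\in V'\setminus\{T(x)\}$ would lie strictly between $T(x)$ and $T(y)$ on the small circle, producing a preimage of $r^\ast$ inside the arc $(x,y)$ on the big circle; by minimality of $y$ such a preimage belongs to a distinct preimage block $V''\neq V$ of $V'$, which is entirely enclosed in the arc by non-crossing of $\pi$. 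Then the blocks of $\mbox{Kr}(\pi)$ that frame $V''$ inside this arc, once reduced by $T$, cross the blocks of $\mbox{Kr}(T(\pi))$ adjacent to $V'$ in the reduced mixed partition, contradicting the non-crossing property furnished by Proposition \ref{charac}.

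The hard part is precisely the last step: upgrading the set-theoretic compatibility of $T$ with the block structures of both $\pi$ and $\mbox{Kr}(\pi)$ to the permutation-level identity $T\sigma=\tau T$. Its delicacy comes from the fact that several preimage blocks of a single block of $\mbox{Red}_n^{(k)}(\pi)$ may coexist, so one must exploit the full nested structure of $\pi\cup\mbox{Kr}(\pi)$ on the big mixed circle, together with Proposition \ref{comm}, to actually exhibit the crossing among Kreweras blocks.
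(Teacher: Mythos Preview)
Your ``if'' direction is correct and is essentially the paper's argument, phrased more crisply via the intertwining $T\sigma=\tau T$ and the identity $t(\mbox{Kr}(p))=t(p)^{-1}c_m$. Nothing to add there.

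The ``only if'' direction, however, is not a proof as written. You correctly invoke Proposition~\ref{comm} and Proposition~\ref{charac}, but then you set up a contradiction argument whose key step---producing an explicit crossing in the reduced mixed partition from the nested block $V''$---is only asserted (``the blocks of $\mbox{Kr}(\pi)$ that frame $V''$ \ldots cross the blocks of $\mbox{Kr}(T(\pi))$ adjacent to $V'$''), and your final paragraph openly concedes that this step has not been carried out. That is the entire content of this direction, so the proposal is incomplete.

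The missing observation is that Proposition~\ref{comm} already does the job \emph{directly}, without any global crossing argument. From $y=\sigma(x)$ one has $\overline{x}\sim_{\mbox{Kr}(\pi)}\overline{y-1}$ (this is exactly $t(\mbox{Kr}(\pi))(\overline{y-1})=\overline{x}$). Applying $T$ and Proposition~\ref{comm} gives $\overline{T(x)}\sim_{\mbox{Kr}(T(\pi))}\overline{T(y)-1}$. But the block of $\mbox{Kr}(T(\pi))$ containing $\overline{T(x)}$ lies entirely in the open arc from $T(x)$ to $\tau(T(x))$ on the small circle; hence $\overline{T(y)-1}$ sits in that arc, so $T(y)$ lies in the half-open arc $(T(x),\tau(T(x))]$. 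Since $T(y)\in V'$ and $\tau(T(x))$ is the only element of $V'$ in that arc, $T(y)=\tau(T(x))$. (The singleton case $|V'|=1$ is immediate.) This is exactly the paper's route: instead of looking for a second preimage block $V''$ and chasing a contradiction through Kreweras framings, one transports the single link $\overline{x}\sim\overline{y-1}$ through $T$ and reads off the successor relation on the small circle.

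In short: keep your ``if'' direction; for the ``only if'' direction, drop the unfinished crossing argument and use Proposition~\ref{comm} pointwise on $\overline{x}$ and $\overline{y-1}$.
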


\begin{proof}
Assume first that $\pi \in NC^{(k)}(n)$ and fix $x\in [(k+1)n]$. Set $y:=t(\pi )(x)$.\\
By construction of $t$, $y$ is the neighbour of $x$ in $\pi $ 
and $t(\mbox{Red}_n^{(k)}(\pi ))(\mbox{Red}_n^{(k)}(x))$ is the neighbour 
of $\mbox{Red}_n^{(k)}(x)$ in $\mbox{Red}_n^{(k)}(\pi )$. 
By construction of the Kreweras complement, 
$\overline{x}$ is the neighbour of $\overline{y-1}$ in $\mbox{Kr}(\pi )$, 
and $\overline{\mbox{Red}_n^{(k)}(x)}$ is the neighbour of 
$\overline{t(\mbox{Red}_n^{(k)}(\pi ))(\mbox{Red}_n^{(k)}(x))-1}$ 
in $\mbox{Kr}(\mbox{Red}_n^{(k)}(\pi ))=\mbox{Red}_n^{(k)}(\mbox{Kr}(\pi ))$ 
(the latter equality holds because of Proposition \ref{comm}).
By reduction property, $\overline{\mbox{Red}_n^{(k)}(y-1)}$ is linked to $\overline{\mbox{Red}_n^{(k)}(x)}$. 
It follows that the neighbour of $\mbox{Red}_n^{(k)}(x)$ in $\mbox{Red}_n^{(k)}(\pi )$, 
$t(\mbox{Red}_n^{(k)}(\pi ))(\mbox{Red}_n^{(k)}(x))$, is the first point 
coming after $\overline{\mbox{Red}_n^{(k)}(y-1)}$ linked to $\mbox{Red}_n^{(k)}(x)$ : 
it is $\mbox{Red}_n^{(k)}(y)$ and we are done.
For the converse, let $\pi \in NC^{(A)}((k+1)n)$ 
be such that $\mbox{Red}_n^{(k)}(\pi )\in NC^{(A)}(n)$ and 
assume that condition ? holds. 
We have to prove that $\mbox{Red}_n^{(k)}(\mbox{Kr}(\pi ))$ 
is a non-crossing partition of $\overline{[n]}$. 
Let $\overline{x}\in \overline{[(k+1)n]}$, its neighbour in $\mbox{Kr}(\pi )$ 
is $\overline{t(\pi )^{-1}(x+1)}$, by construction of the Kreweras complement. 
It follows of condition ? that 
$\mbox{Red}_n^{(k)}(t(\pi )^{-1}(x+1))=t(\mbox{Red}_n^{(k)}(\pi ))^{-1}(\mbox{Red}_n^{(k)}(x+1))$. 
Hence the congruence class of the neighbour of $\overline{x}$ in $\mbox{Kr}(\pi )$ 
only depends on the congruence class of $x$, and moreover 
$\mbox{Red}_n^{(k)}(\mbox{Kr}(\pi ))=\mbox{Kr}(\mbox{Red}_n^{(k)}(\pi ))$ 
and we are done. 
\end{proof}

The preceding proposition has some important consequences. 

\begin{Cor} 
Let $\pi \in NC^{(A)}((k+1)n)$ and $V$ be a block of $\pi\cup \mbox{Kr}(\pi )$. 
The cardinal of $\mbox{Red}_n^{(k)}(V)$ divides the cardinal of $V$. 
We call {\em multiplicity} of $V$ the quotient 
$$\mbox{mult}_{\pi\cup \mbox{Kr}(\pi )}(V):=\frac{\mbox{card}(V)}{\mbox{card}(\mbox{Red}_n^{(k)}(V))}.$$
This is a positive integer lower or equal than $k+1$.
The blocks of multiplicity $1$ will be called {\em simple}.
\end{Cor}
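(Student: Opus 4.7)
The plan is to read the corollary off the equivariance property proved in Proposition \ref{biane} (we assume $\pi \in NC^{(k)}(n)$, as the statement requires the reduction to define a non-crossing partition of $[n]$). Set $\sigma := t(\pi)$ and $\bar\sigma := t(\mbox{Red}_n^{(k)}(\pi))$; Proposition \ref{biane} says precisely that
$$\mbox{Red}_n^{(k)} \circ \sigma = \bar\sigma \circ \mbox{Red}_n^{(k)}.$$
Via the Biane bijection, the blocks of $\pi$ are the cycles of $\sigma$, and similarly the blocks of $\mbox{Red}_n^{(k)}(\pi)$ are the cycles of $\bar\sigma$.

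Suppose first that $V$ is a block of $\pi$. Fix $x_0 \in V$ and set $\bar x_0 := \mbox{Red}_n^{(k)}(x_0)$; iterating the equivariance yields $\mbox{Red}_n^{(k)}(\sigma^j(x_0)) = \bar\sigma^j(\bar x_0)$ for every $j \geq 0$, so $\mbox{Red}_n^{(k)}(V)$ is the $\bar\sigma$-orbit of $\bar x_0$. Write $m := |V|$ and $\bar m := |\mbox{Red}_n^{(k)}(V)|$ for the two cycle lengths. Applying equivariance to $\sigma^m(x_0) = x_0$ gives $\bar\sigma^m(\bar x_0) = \bar x_0$, so $\bar m$ divides $m$. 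Setting $d := m/\bar m$, the element $\bar\sigma^j(\bar x_0)$ depends only on $j \bmod \bar m$, so each point of $\mbox{Red}_n^{(k)}(V)$ is the image under $\mbox{Red}_n^{(k)}$ of exactly $d$ elements of $V$. This establishes the divisibility, and identifies the multiplicity as $d$.

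For the upper bound, it suffices to observe that each fiber of $\mbox{Red}_n^{(k)} : [(k+1)n] \to [n]$ has cardinality exactly $k+1$, so each fiber of its restriction to $V$ has at most $k+1$ elements; hence $d \leq k+1$. For the case where $V$ is a block of $\mbox{Kr}(\pi)$, the very same argument applies to $\mbox{Kr}(\pi)$ in place of $\pi$, using that $NC^{(k)}(n)$ is stable under Kreweras complementation and that $\mbox{Red}_n^{(k)}(\mbox{Kr}(\pi)) = \mbox{Kr}(\mbox{Red}_n^{(k)}(\pi))$ by Proposition \ref{comm}.

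There is no real obstacle here, since the entire content is already packaged in Proposition \ref{biane}; the mildly finicky point is purely notational, namely to treat the case of blocks of $\mbox{Kr}(\pi)$ without rewriting everything in $\overline{[(k+1)n]}$, which is handled by invoking the closure of $NC^{(k)}(n)$ under $\mbox{Kr}$ together with Proposition \ref{comm}.
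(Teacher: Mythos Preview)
Your proof is correct and follows essentially the same route as the paper: both arguments use the equivariance $\mbox{Red}_n^{(k)}\circ t(\pi)=t(\mbox{Red}_n^{(k)}(\pi))\circ\mbox{Red}_n^{(k)}$ from Proposition~\ref{biane} to see that the cycle length of $\bar\sigma$ through $\mbox{Red}_n^{(k)}(x)$ divides the cycle length of $\sigma$ through $x$. Your write-up is in fact more complete than the paper's, which leaves the bound $d\le k+1$ and the case of blocks of $\mbox{Kr}(\pi)$ implicit; your handling of these via the fiber size and the closure of $NC^{(k)}(n)$ under $\mbox{Kr}$ together with Proposition~\ref{comm} is the natural way to fill them in.
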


\begin{proof}
For $x \in V$, the cardinal of $V$ is the smallest positive $i$ verifying $$(t(\pi ))^i(x)=x.$$ 
A repeated use of Proposition \ref{biane} gives that, for such an $i$, 
\begin{equation} \label{cycl}
(t(\mbox{Red}_n^{(k)}(\pi )))^i(\mbox{Red}_n^{(k)}(x))=\mbox{Red}_n^{(k)}(x).
\end{equation}
Thus $i$ is a multiple of the cardinal of $\mbox{Red}_n^{(k)}(V)$, which is also characterized 
by the fact that it is the smallest positive $i$ veriying condition \eqref{cycl}.
\end{proof}

It is not so difficult to see that, 
if there is a block of multiplicity $k+1$ in $\pi\cup \mbox{Kr}(\pi )$, 
for $\pi \in NC^{(A)}((k+1)n)$, the other blocks are necessarily simple, 
because one cannot link two elements of the same congruence class 
without crossing the block of multiplicity $k+1$.
This is in fact a particular case of the following result : 

\begin{Cor} 
For $\pi \in NC^{(k)}(n)$,
$$\sum _{V\in \mbox{bl}(\pi \cup \mbox{Kr}(\pi ))} (mult_{\pi \cup \mbox{Kr}(\pi )}(V)-1)=k.$$
\end{Cor}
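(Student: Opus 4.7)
The plan is to evaluate $\sum_V \mbox{mult}_{\pi\cup\mbox{Kr}(\pi)}(V)$ and $|\mbox{bl}(\pi\cup\mbox{Kr}(\pi))|$ separately, and then subtract. By \eqref{nbblocks} applied in $NC^{(A)}((k+1)n)$, one has $|\mbox{bl}(\pi\cup\mbox{Kr}(\pi))|=(k+1)n+1$. Applying \eqref{nbblocks} this time in $NC^{(A)}(n)$ to $\mbox{Red}_n^{(k)}(\pi)$, combined with Proposition \ref{comm} (which identifies $\mbox{Kr}(\mbox{Red}_n^{(k)}(\pi))$ with $\mbox{Red}_n^{(k)}(\mbox{Kr}(\pi))$) and Proposition \ref{charac} (which ensures $\mbox{Red}_n^{(k)}(\pi\cup\mbox{Kr}(\pi))$ is a genuine non-crossing partition of $[n]\cup\overline{[n]}$), yields $|\mbox{bl}(\mbox{Red}_n^{(k)}(\pi\cup\mbox{Kr}(\pi)))|=n+1$.

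The heart of the argument is the computation of $\sum_V \mbox{mult}_{\pi\cup\mbox{Kr}(\pi)}(V)$ via a double-counting indexed by the blocks of the reduced partition. First, stratify the blocks $V$ of $\pi\cup\mbox{Kr}(\pi)$ according to their image $\mbox{Red}_n^{(k)}(V)$. Fix a block $U$ of $\mbox{Red}_n^{(k)}(\pi\cup\mbox{Kr}(\pi))$. The preimage of $U$ in $[(k+1)n]\cup\overline{[(k+1)n]}$ under $\mbox{Red}_n^{(k)}$ has exactly $(k+1)|U|$ elements, since each element of $[n]\cup\overline{[n]}$ admits $k+1$ preimages. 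On the other hand, this preimage decomposes as the disjoint union of precisely those $V\in\mbox{bl}(\pi\cup\mbox{Kr}(\pi))$ satisfying $\mbox{Red}_n^{(k)}(V)=U$; indeed, no block $V$ can have its image straddle two distinct blocks of the reduced partition, because the reduced collection is itself a partition. Counting in two ways gives $\sum_{V:\,\mbox{Red}_n^{(k)}(V)=U}\mbox{mult}_{\pi\cup\mbox{Kr}(\pi)}(V)\cdot |U|=(k+1)|U|$, so this inner sum equals $k+1$. Summing over the $n+1$ reduced blocks $U$ gives $\sum_V \mbox{mult}_{\pi\cup\mbox{Kr}(\pi)}(V)=(k+1)(n+1)$.

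Subtracting yields $\sum_V(\mbox{mult}_{\pi\cup\mbox{Kr}(\pi)}(V)-1)=(k+1)(n+1)-((k+1)n+1)=k$, as desired. The only delicate step is the ``no straddling'' claim, but once Proposition \ref{charac} is in hand it is automatic: distinct blocks of the genuine partition $\mbox{Red}_n^{(k)}(\pi\cup\mbox{Kr}(\pi))$ are disjoint, so no block $V$ can map partly into $U$ and partly into $U'\neq U$. This is precisely the content that the reduction mod $n$ property is designed to provide, and it is what makes the global double count go through cleanly.
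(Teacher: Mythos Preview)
Your proof is correct and follows essentially the same approach as the paper's: both stratify the sum $\sum_V \mbox{mult}(V)$ over the blocks of the reduced partition, establish that each inner sum equals $k+1$, and then apply \eqref{nbblocks} twice to obtain $(k+1)(n+1)-((k+1)n+1)=k$. If anything, your version is more explicit, since you justify the inner-sum identity via a clean double count of the $(k+1)|U|$ preimages and invoke Propositions~\ref{comm} and~\ref{charac} where the paper leaves these steps implicit.
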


\begin{proof}
This is a simple computation.
First notice that 
\begin{eqnarray} \label{mult}
\nonumber \sum _{V\in \mbox{bl}(\pi \cup \mbox{Kr}(\pi ))} (mult_{\pi \cup \mbox{Kr}(\pi )}(V)-1) =
\\\sum _{V\in \mbox{bl}(\pi \cup \mbox{Kr}(\pi ))} mult_{\pi \cup \mbox{Kr}(\pi )}(V)-|\pi \cup \mbox{Kr}(\pi )|.
\end{eqnarray}
The first term in \eqref{mult} is 
$$\sum _{W\in \mbox{bl}(\mbox{Red}_n^{(k)}(\pi \cup \mbox{Kr}(\pi )))} 
\sum _{V\in \mbox{bl}(\pi \cup \mbox{Kr}(\pi )): \mbox{Red}_n^{(k)}(V)=W} mult_{\pi \cup \mbox{Kr}(\pi )}(V).$$
But for any block $W$ of $\mbox{Red}_n^{(k)}(\pi \cup \mbox{Kr}(\pi ))$, one has 
$$\sum _{V\in \mbox{bl}(\pi \cup \mbox{Kr}(\pi )): \mbox{Red}_n^{(k)}(V)=W} mult_{\pi \cup \mbox{Kr}(\pi )}(V)=k+1.$$
Applying twice formula \eqref{nbblocks}, we get 
\begin{align}\nonumber \sum _{V\in \mbox{bl}(\pi \cup \mbox{Kr}(\pi ))} (mult_{\pi \cup \mbox{Kr}(\pi )}(V)-1)=\ &(k+1)|\mbox{Red}_n^{(k)}(\pi \cup \mbox{Kr}(\pi ))|-|\pi \cup \mbox{Kr}(\pi )|
\\\nonumber =\ &(k+1)(n+1)-((k+1)n+1)
\\\nonumber =\ &k.
\end{align}
\end{proof}

For a partition $\pi \in NC^{(k)}(n)$, one may define 
a vector $\lambda _\pi $ with integer coordinates as follows : 
$$(\lambda _\pi )_i=\sum _{V\in \mbox{bl}(\pi \cup \mbox{Kr}(\pi )): \mbox{Red}_n^{(k)}(V)
=\mbox{Mix}(\mbox{Red}_n^{(k)}(\pi \cup \mbox{Kr}(\pi )),i)} (mult_{\pi \cup \mbox{Kr}(\pi )}(V)-1).$$
The vector $\lambda _\pi \in \Lambda _{n+1,k}$ is called the {\em shape} of $\pi $.

\begin{Rem} 
A type B non-crossing partition $\pi $ is determined by 
its absolute value $p:=\mbox{Abs}(\pi )$ and 
the choice of the block $Z\in \mbox{bl}(p\cup \mbox{Kr}(p))$, 
which has to be lifted to the zero-block of $\pi $. 
This latter choice is encoded in the shape $\lambda _\pi $ of $\pi $. 
Indeed, type B corresponds to the case $k=1$ of non-crossing 
partitions of type $k$ and therefore the shape $\lambda _\pi $ belongs 
to the set $\Lambda _{n+1,1}$ consisting of the $n+1$ vectors 
$e_i=(\delta _i^j)_{1\leq j\leq n+1} , 1\leq i\leq n+1$. 
That $\lambda _\pi =e_i$ means exactly that we have to choose the block 
$Mix(p,i)$ as the absolute value of the zero-block. 
The conclusion is that a type B non-crossing partition, 
considered as a non-crossing partition of type $1$, is 
determined by its reduction (or absolute value in the type B language) 
and its shape. Unfortunately, this is not the case when $k\geq 2$. 
It is interesting to ask how to determine a general 
non-crossing partition of type $k$. This question is investigated 
in the proof of the next proposition.
\end{Rem}

\begin{Prop} \label{const}
Let $\lambda \in \Lambda _{n+1,k}$. 
The number of $\pi \in NC^{(k)}(n)$ having shape $\lambda $ 
and reduction a fixed non-crossing partition $p\in NC^{(A)}(n)$ 
is the same for any choice of $p\in NC^{(A)}(n)$. 
We will denote this quantity by $r(\lambda )$.
\end{Prop}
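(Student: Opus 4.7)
The plan is to proceed by induction on $n$, the base case $n=1$ being trivial since $NC^{(A)}(1)$ is a singleton. For the inductive step, I would lean on the permutation picture provided by Proposition \ref{biane}: a lift $\pi \in NC^{(k)}(n)$ of $p$ corresponds bijectively to a permutation $\sigma = t(\pi) \in S_{(k+1)n}$ lying on the geodesic from the identity to the full cycle $c_{(k+1)n} = (1,2,\ldots,(k+1)n)$ such that $\mathrm{Red}_n^{(k)} \circ \sigma = t(p) \circ \mathrm{Red}_n^{(k)}$. Identifying $[(k+1)n]$ with $[n] \times \{0,1,\ldots,k\}$ via $x = i + n\ell$, every lifting permutation of $t(p)$ is specified uniquely by a tuple $(\phi_i)_{i\in [n]} \in S_{k+1}^n$ through $\sigma(i,\ell)=(t(p)(i),\phi_i(\ell))$. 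Under this parametrization, the blocks of $\pi$ above a block $W$ of $p$ correspond to the cycles of the product $\phi_{a_{|W|}} \circ \cdots \circ \phi_{a_1}$, where $(a_1,\ldots,a_{|W|})$ is the cycle of $t(p)$ attached to $W$; likewise, the blocks of $\mathrm{Kr}(\pi)$ above a block of $\mathrm{Kr}(p)$ arise from a ``Kreweras-twisted'' product of the same $\phi_i$'s involving the cyclic shift on the $k+1$ sheets. In particular, the shape value $\lambda_W = (k+1) - s_W$ records the deficiency of the cycle count of the appropriate product from the maximum $k+1$.

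Now I would pick an interval block $V = \{i,i+1,\ldots,j\}$ of $p$, of some length $\ell = j-i+1$. Such a block always exists. When $\ell \geq 2$, the gap block $V^\star = \{\bar i, \ldots, \overline{j-1}\}$ of $\mathrm{Kr}(p)$ sits nested inside $V$ in $\sqsubset$-order, and is in fact the block of $\mathrm{Kr}(p)$ that is $\sqsubset$-predecessor of $V$; when $\ell = 1$, $V^\star$ is empty. The strategy is to ``localize'': the choice of $\phi_i,\phi_{i+1},\ldots,\phi_{j-1}$ (those edge permutations attached to the interval block) controls exactly the portions of $\pi \cup \mathrm{Kr}(\pi)$ sitting in the fibers $\mathrm{Red}^{-1}(V)$ and $\mathrm{Red}^{-1}(V^\star)$, whereas the remaining $\phi_m$'s control a type-$k$ lift of the partition $p'$ obtained from $p$ by ``collapsing'' $V$ to a single point. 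If the number of \emph{local} configurations attached to $V$ realizing local shape $(\lambda_V, \lambda_{V^\star})$ turns out to depend only on these two numbers and on $k$, then the count of lifts of $p$ with shape $\lambda$ factors as this local count times the count of lifts of $p'$ with the residual shape, and the inductive hypothesis applied to $p'$ delivers the claim.

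The substantive work is thus concentrated in the local count. Combinatorially, it amounts to the following statement in $S_{k+1}$: the number of tuples $(\phi_i,\ldots,\phi_{j-1}) \in S_{k+1}^\ell$ such that the ``forward'' product $\phi_{j-1}\cdots \phi_i$ has a given cycle count $s_V$ and, simultaneously, the ``twisted'' product responsible for the $\mathrm{Kr}$-side above $V^\star$ has a given cycle count $s_{V^\star}$, is a function of $s_V, s_{V^\star}, k$ alone, and is \emph{independent of $\ell$}. The naive factorization count $((k+1)!)^{\ell-1}$ manifestly grows with $\ell$, but the simultaneous constraint coming from the Kreweras companion must cancel this growth. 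Verifying this cancellation --- equivalently, exhibiting a length-preserving bijection between the relevant tuples for different values of $\ell$ --- is the hard part of the proof and where the structural content of non-crossing partitions of type $k$ is really used. Once this $\ell$-independence is in hand, the rest is bookkeeping and the induction closes.
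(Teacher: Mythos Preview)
Your localization picture contains a factual error that undermines the ``hard part'' you isolate. If $V=\{i,i+1,\ldots,j\}$ is an interval block of $p$ with $\ell\ge 2$, then the portion of $\mathrm{Kr}(p)$ nested inside $V$ is \emph{not} a single block $V^\star=\{\bar i,\ldots,\overline{j-1}\}$: it consists of the $\ell-1$ singletons $\{\bar i\},\{\overline{i+1}\},\ldots,\{\overline{j-1}\}$ (e.g.\ $\mathrm{Kr}(1_{[n]})=0_{[n]}$). Consequently the shape $\lambda$ attaches not two but $\ell$ coordinates to the region you want to localize, and the ``local count'' you set up --- counting tuples $(\phi_i,\ldots,\phi_{j-1})$ subject to \emph{two} cycle-count constraints, with the resulting puzzle of $\ell$-independence --- is simply the wrong problem. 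With the correct description there are $\ell$ constraints on $\ell-1$ permutations and no mysterious cancellation is needed, but then the reduction to a single collapsed partition $p'$ no longer goes through as you describe: the $\ell-1$ interior singletons of $\mathrm{Kr}(p)$ have to be handled, and the edge permutation $\phi_j$ participates both in the product over $V$ and in the (possibly large) outside block of $\mathrm{Kr}(p)$ containing $\bar j$.

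The paper sidesteps all of this by removing not an arbitrary interval block but the $\sqsubset$-minimal block $\mathrm{Mix}(p,1)$ of $p\cup\mathrm{Kr}(p)$, which is always a \emph{singleton} $\{i\}$ (or $\{\bar i\}$). Over a singleton there is only one shape coordinate $\lambda_1$ in play, and the number of ways to form the blocks of $\pi$ reducing to $\{i\}$ visibly depends only on $\lambda_1$; fixing such a choice then cuts $[(k+1)n]\cup\overline{[(k+1)n]}$ (with $i$'s fibre removed) into $\lambda_1+1$ pieces on each of which the induced partition is an element of some $NC^{(l)}(n-1)$ with a common reduction $\tilde p$, and the induction on $n$ closes without any $\ell$-independence statement. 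If you want to salvage the permutation-tuple viewpoint, the clean move is the same one: peel off a singleton, not a long interval block.
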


\begin{proof}
As announced, we investigate how to determine a type $k$ non-crossing partition 
$\pi \in NC^{(k)}(n)$, once its reduction $p\in NC^{(A)}(n)$ 
and its shape $\lambda \in \Lambda _{n+1,k}$ are given. 
We know that $\mbox{Mix}(p,1)$ is a singleton of $[n]\cup \overline{[n]}$. 
For simplicity, we assume that it is a singleton $\{i\}$ of $[n]$. 
We need to know how to form the blocks of $\pi $ reducing to $\{i\}$. 
The number of admissible ways to form these blocks depends 
on the value of $\lambda _1$ but of course not on $p$, 
because the actual value of $i$ does not come into the game. 
Assume that these blocks are formed; this gives a decomposition 
of $[(k+1)n]\setminus \{x\mid \mbox{Red}_n^{(k)}(x)=i\}\cup \overline{[(k+1)n]}$ 
into $\lambda _1+1$ sets, according to the following process : 
let us denote by $\{i+l_1n,\ldots ,i+l_mn\}$ the smallest (with respect to $\sqsubset $) of the blocks 
we have just formed that is not simple (if there is no such block, i.e. when $\lambda _1=0$, 
our decomposition is trivial) ; each of the $\{\overline{i+l_jn},\ldots ,\overline{i-1+l_{j+1}n}\}$ 
becomes a set in our decomposition after erasing the 
$i+ln, l_j\leq l\leq l_{j+1}$, for each $1\leq j\leq m-1$. 
Then remove all elements $x$ such that $i+l_1n\leq x\leq i+l_mn$ and repeat the process 
by considering the new smallest block with respect to $\sqsubset $ 
among the remaining blocks that are not simple. 
Notice that the sets obtained this way may be identified 
with sets of the form $[l(n-1)]\cup \overline{[l(n-1)]}$, for some $l\leq k+1$, up to identifying the first and 
last elements of the sets. This can be done, because these elements are necessarily linked by 
construction of the Kreweras complement. 
On each of these sets, $\pi $ induces a non-crossing partition that belongs to $NC^{(l)}(n-1)$. 
All such induced non-crossing partitions have the same reduction $\tilde p$ obtained by erasing 
in $p\cup \mbox{Kr}(p)$ the element $i$ and by identifying $\overline{i-1}$ with $\overline{i}$ 
(which are also necessarily linked in $\mbox{Kr}(p)$). The shapes of the induced partitions sum 
to the shape $\lambda $ of $\pi $. 
Hence a non-crossing partition of type $k$ is determined by its reduction $p$, its shape $\lambda $, 
an admissible way to form the blocks reducing to $\mbox{Mix}(p,1)$, an admissible decomposition 
of $\lambda $ and the choice of the induced non-crossing partitions in sets $NC^{(l)}(n-1)$, 
having reduction $\tilde p$ and shape the summands in the decomposition of $\lambda $.\\
Our argument goes by induction on $n$. For $n=1$ and any $k$, there is only one possible reduction, 
because $\# NC^{(A)}(1)=1$ and consequently there is nothing to prove in that case. 
Assume that, for any $l$, the number of partitions in $NC^{(l)}(n-1)$ with given shape and reduction
does not depend on the choice of the reduction. 
According to our analysis of the first part of the proof, the number of partitions $\pi \in NC^{(k)}(n)$ with 
given shape $\lambda $ and reduction $p$ does not depend on the choice of the reduction, because we noticed that 
the number of admissible ways to form the blocks reducing to $\mbox{Mix}(p,1)$ does not depend on $p$, 
the shape decomposition depend only on $\lambda $ and the way the latter blocks are formed, 
and by induction, the numbers of choices for the induced partitions only depend on their shapes.
\end{proof}

\begin{Rem} 
For small values of $k$, one may easily compute the values of $r(\lambda )$ 
for each $\lambda \in \Lambda _{n+1,k}$.\\ 
In the simplest case $k=0$, $$\Lambda _{n+1,0}=\{(0,\ldots ,0)\},$$ $$r((0,\ldots ,0))=1.$$ 
For $k=1$, $$\Lambda _{n+1,1}=\{e_i\}_{i=1,\ldots ,n+1},$$ and one has 
$$\forall 1\leq i\leq n+1, r(e_i)=1.$$
For $k=2$, $$\Lambda _{n+1,2}=\{e_i+e_j\}_{i,j=1,\ldots ,n+1}.$$ 
The value of $r(e_i+e_j)$ depends on whether $i=j$ or not:
$$\forall 1\leq i\leq n+1, r(2e_i)=1.$$
$$\forall 1\leq i<j\leq n+1, r(e_i+e_j)=3.$$
\end{Rem}

\

We investigate in the next subsection some properties of the set $NC^{(k)}(n)$.

\begin{center}
{\bf 6.3 Study of the poset $NC^{(k)}(n)$}
\end{center}

The set $NC^{(k)}(n)$, being a subset of $(NC^{(A)}((k+1)n),\preceq )$, 
inherits its partially ordered set (abbreviated poset) structure. 
Contrary to $NC^{(B)}(n)$, which is a sublattice of $(NC^{(A)}(2n),\preceq )$ 
(up to the identification $[\pm n]=[2n]$), $(NC^{(k)}(n),\preceq )$ is unfortunately 
not a sublattice of $(NC^{(A)}((k+1)n),\preceq )$, when $k\geq 2$. 

\begin{Rem} \label{cex}
When $k=2$ and $n=2$, consider the partitions 
$$\pi :=\{\{2,3,4,5\},\{1,6\}\}\in NC^{(2)}(2)$$
and
$$\rho :=\{\{1,2\},\{3,4,5,6\}\}\in NC^{(2)}(2).$$
It is an easy exercise to determine the meet of these two partitions 
in the lattice $(NC^{(A)}(6),\preceq )$: 
$$\pi \wedge _{NC^{(A)}(6)} \rho =\{\{1\},\{2\},\{3,4,5\},\{6\}\}.$$
It is immediate that $\pi \wedge _{NC^{(A)}(6)} \rho $ is not an element 
of $NC^{(2)}(2)$ which is consequently not a sublattice of $(NC^{(A)}(6),\preceq )$ ; 
the same kind of argument would prove that $NC^{(k)}(n)$ is never a sublattice 
of $(NC^{(A)}((k+1)n),\preceq )$, as soon as $k\geq 2$. 
\end{Rem}

It is natural to ask whether $NC^{(k)}(n)$ is or not a lattice in its own right 
for the reverse refinement order $\preceq $. We do not know the answer to this question. 

We now state and prove the main result of this section.

\begin{Theo} \label{cover}
$\pi \mapsto \mbox{Red}_n^{(k)}(\pi )$ is a $\frac{1}{(k+1)n+1}C_{(n+1)(k+1)}^{k+1}$-to-$1$ 
map from $NC^{(k)}(n)$ onto $NC^{(A)}(n)$.
\end{Theo}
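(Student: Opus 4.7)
The plan splits into two stages. Stage one is essentially already done: Proposition~\ref{const} tells us that $|(\mbox{Red}_n^{(k)})^{-1}(p)| = \sum_{\lambda \in \Lambda_{n+1,k}} r(\lambda)$ is independent of the choice of $p \in NC^{(A)}(n)$, so $\pi \mapsto \mbox{Red}_n^{(k)}(\pi)$ is automatically constant-to-one from $NC^{(k)}(n)$ onto $NC^{(A)}(n)$. Surjectivity is immediate from the construction used in the proof of Proposition~\ref{const}, where given any $p$ one exhibits a lift $\pi$ (for instance of shape $\lambda = 0$, i.e.\ with every block simple). All that remains is to identify the constant fiber size with $\frac{1}{(k+1)n+1}C_{(n+1)(k+1)}^{k+1}$.

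For stage two, my preferred route is to compute $|NC^{(k)}(n)|$ globally and divide by $|NC^{(A)}(n)| = C_n$. A generating-function approach looks most promising: introduce $G_k(x) := \sum_{n \geq 1} |NC^{(k)}(n)| x^n$ and derive a functional equation by decomposing $\pi \in NC^{(k)}(n)$ along the collection of blocks of $\pi \cup \mbox{Kr}(\pi)$ lying above $\mbox{Mix}(\mbox{Red}_n^{(k)}(\pi), 1)$, which is a singleton of $p \cup \mbox{Kr}(p)$. This is precisely the decomposition organized in the proof of Proposition~\ref{const}, but now applied without fixing $p$: peeling off the lift of $\mbox{Mix}(p, 1)$ cuts the circular diagram into smaller arcs, on each of which $\pi$ restricts to a non-crossing partition of type $l \leq k$ for smaller parameters. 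Telescoping this produces an equation of Fuss--Catalan type which Lagrange inversion then solves.

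An alternative, and perhaps conceptually cleaner, route is bijective. Via Biane's map $t$, Proposition~\ref{biane} re-expresses the condition $\pi \in NC^{(k)}(n)$ as an equivariance condition on $t(\pi) \in S_{(k+1)n}$ under the cyclic shift $x \mapsto x + n$ modulo $(k+1)n$. Counting equivariant permutations that lie on a geodesic to the full cycle can be attacked either directly (descent statistics plus cyclic sieving) or by matching them bijectively to a Fuss--Catalan-indexed family such as $(k+1)$-ary trees with $n+1$ leaves, which would make the factor $\frac{1}{(k+1)n+1}C_{(n+1)(k+1)}^{k+1}$ appear on the nose.

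The main obstacle is stage two. Proposition~\ref{const} supplies a recursion for $r(\lambda)$, but no closed form; matching $\sum_{\lambda \in \Lambda_{n+1,k}} r(\lambda)$ to the Fuss--Catalan expression requires either a Lagrange inversion applied to the generating function from the block-peeling decomposition, or an explicit bijection between $NC^{(k)}(n)$ and $NC^{(A)}(n)$ decorated by a Fuss--Catalan structure on $n+1$ slots (one per block of $p \cup \mbox{Kr}(p)$). The combinatorial bookkeeping needed to verify that the two counts agree, especially that the interactions between adjacent blocks of the reduction do not introduce spurious overcounting, will be the technically delicate part; the special values $k = 0, 1$ treated in the preceding Remark (where the formula collapses to $1$ and $n+1$ respectively) and the sanity-check $k=2, n=2$ giving $12$ are consistent with this target and should guide the bookkeeping.
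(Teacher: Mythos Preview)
Your stage one matches the paper exactly: Proposition~\ref{const} gives the constant-fiber property, and surjectivity is immediate.

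For stage two, however, the paper takes a much shorter route than either of your proposals. Since the fiber size is constant, one is free to compute the size of a \emph{single} convenient fiber rather than the total $|NC^{(k)}(n)|$. The paper picks $p = 1_{[n]}$ and observes that the fiber $(\mbox{Red}_n^{(k)})^{-1}(1_{[n]})$ is precisely the set of non-crossing partitions of $[(k+1)n]$ all of whose blocks have cardinality divisible by $n$. This set has already been counted in \cite{bbcc}, where its cardinality is shown to be the Fuss--Catalan number $\frac{1}{(k+1)n+1}C_{(n+1)(k+1)}^{k+1}$, and the proof ends there.

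Your generating-function and bijective routes would in principle reach the same number, but they re-derive a Fuss--Catalan enumeration that is available off the shelf. The paper's trick buys a one-line citation in place of the Lagrange inversion or the equivariant-permutation count you outline; conversely, your approaches would make the paper self-contained and would expose \emph{why} the Fuss--Catalan structure appears (e.g.\ via $(k{+}1)$-ary trees on the $n{+}1$ blocks of $p\cup\mbox{Kr}(p)$), which the citation to \cite{bbcc} leaves opaque. So your plan is not wrong, just considerably longer than necessary, and the ``technically delicate'' bookkeeping you anticipate is exactly what the specialization to $p=1_{[n]}$ avoids.
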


\begin{proof}
We fix $p\in NC^{(A)}(n)$. 
The shape $\lambda _\pi $ of a $\pi \in NC^{(k)}(n)$ 
satisfying $\mbox{Red}_n^{(k)}(\pi )=p$ is an element of the set $\Lambda _{n+1,k}$, 
and for each $\lambda \in \Lambda _{n+1,k}$, there are exactly $r(\lambda )$ 
non-crossing partitions of type $k$ with reduction $p$ and shape $\lambda $. 
Hence there are $\sum_{\lambda \in \Lambda _{n+1,k}} r(\lambda )$ 
non-crossing partitions of type $k$ with reduction $p$, and we know by 
Proposition \ref{const} that this number does not depend on $p$. 
It remains to prove that $\sum_{\lambda \in \Lambda _{n+1,k}} r(\lambda )=\frac{1}{(k+1)n+1}C_{(n+1)(k+1)}^{k+1}$, 
by counting the non-crossing partitions of type $k$ with reduction $1_{[n]}$. 
The set formed by these partitions is precisely the set $NC_n(k)$ of 
non-crossing partitions of $[(k+1)n]$ having blocks of size divisible by $n$. 
The latter set appears in \cite{bbcc}, where it is proved that its cardinal 
is $\frac{1}{(k+1)n+1}C_{(n+1)(k+1)}^{k+1}$. 
\end{proof}

We end this section by defining a subset of $NC^{(k)}(n)$ that will be used in Section 7.

\begin{Def} 
We write $NC_\ast ^{(k)}(n)$ for the set of non-crossing partitions of type $k$ 
without non-simple blocks in their Kreweras complement.
\end{Def} 

\begin{Rem} 
In the shape of a non-crossing partition $\pi \in NC_\ast ^{(k)}(n)$, the coordinates 
corresponding to blocks of $\mbox{Kr}(\pi )$ are zero ; there is therefore a straightforward 
bijection between the set of shapes of non-crossing partitions $\pi \in NC_\ast ^{(k)}(n)$ 
satisfying $\mbox{Red}_n^k(\pi )=p$ and the set $\Lambda _{|p|,k}$. 
Notice also that, given $p\in NC^{(A)}(n)$ 
and $\lambda \in \Lambda _{|p|,k}$, there are exactly $r(\lambda )$ non-crossing partitions 
$\pi \in NC_\ast ^{(k)}(n)$ with reduction $p$ and, with a small abuse of language, shape $\lambda $. 
\end{Rem}

Non-crossing partitions of type $k$ give a combinatorial description of the 
version of the boxed convolution with scalars in $\mathcal{C}_k$, 
as explained in the next section. 

\begin{center} 
{\bf\large 7. Boxed convolution of type $k$}
\end{center}

As for type A and B, there is a boxed convolution operation 
associated to the non-crossing partitions of type $k$. 
It is defined on formal power series with coefficients in $\CC^{k+1}$, as follows.

\begin{Def} 
\begin{enumerate}
\item We denote by $\Theta ^{(k)}$ the set of power series of the form 
$$f(z)=\sum_{n=1}^\infty  (\alpha _n^{(0)},\ldots ,\alpha _n^{(k)})z^n,$$ where, 
for each $n\geq 1$ and $0\leq i\leq k$, $\alpha _n^{(i)}$ is a complex number.
\item Let $f(z)=\sum_{n=1}^\infty  (\alpha _n^{(0)},\ldots ,\alpha _n^{(k)})z^n$ 
and $g(z)=\sum_{n=1}^\infty  (\beta _n^{(0)},\ldots ,\beta _n^{(k)})z^n$ be in $\Theta ^{(k)}$.
For every $m\geq 1$ and every $0\leq i\leq k$, consider the numbers $\gamma _m^{(i)}$ defined by
\begin{align}\nonumber \gamma _m^{(i)}=\sum_{\pi \in NC^{(i)}(m)} \ &
\frac{C_i^{(\lambda _\pi )_1,\ldots ,(\lambda _\pi )_{m+1}}}{r(\lambda _\pi )}
\prod_{j=1}^{ | \mbox{Red}_m^{(i)}(\pi ) | } 
\alpha _{\mbox{card}(\mbox{Sep}(\mbox{Red}_m^{(i)}(\pi ),j))}^{((\lambda _\pi )_j)}\cdot 
\\\nonumber \ &\prod_{j= | \mbox{Red}_m^{(i)}(\pi ) | +1}^{m+1}
\beta _{\mbox{card}(\mbox{Sep}(\mbox{Red}_m^{(i)}(\pi )),j)}^{((\lambda _\pi )_j)}.
\end{align}
Then the series $\sum_{n=1}^\infty  (\gamma _n^{(0)},\ldots ,\gamma _n^{(k)})z^n$ 
is called the boxed convolution of type $k$ of $f$ and $g$, 
and is denoted by $f$ \framebox[7pt]{$\star$}$^{(k)} g$.
\end{enumerate}
\end{Def}

It turns out that, up to identifying the two sets $\Theta ^{(k)}$ 
and $\Theta _{\mathcal{C}_k}^{(A)}$, the two operations \framebox[7pt]{$\star$}$^{(k)}$ 
and \framebox[7pt]{$\star$}$_{\mathcal{C}_k}^{(A)}$ are actually 
the same, as stated in the next theorem.

\begin{Theo} \label{starboxthm}
\framebox[7pt]{$\star$}$^{(k)}=$\framebox[7pt]{$\star$}$_{\mathcal{C}_k}^{(A)}$
\end{Theo}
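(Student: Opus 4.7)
The plan is to compute the $i$-th coordinate of the $m$-th coefficient of $f \framebox[7pt]{$\star$}_{\mathcal{C}_k}^{(A)} g$ by expanding the $\mathcal{C}_k$-valued product term by term, then repackage the resulting double sum as a sum indexed by $NC^{(i)}(m)$ using the fibration $\pi \mapsto (\mbox{Red}_m^{(i)}(\pi),\lambda_\pi)$ studied in Section 6. The key observation is that the summand in the definition of $\framebox[7pt]{$\star$}^{(k)}$ depends on $\pi$ only through the pair $(\mbox{Red}_m^{(i)}(\pi),\lambda_\pi)$, which makes it possible to match the two formulas in a purely combinatorial way.

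First, I would fix $m$ and $0\leq i\leq k$ and start from
$$\gamma_m = \sum_{p \in NC^{(A)}(m)} \Bigl(\prod_{j=1}^{h} \alpha_{|E_j|}\Bigr)\cdot \Bigl(\prod_{j=1}^{l}\beta_{|F_j|}\Bigr),$$
with the blocks of $p$ listed as $E_1 \sqsubset \cdots \sqsubset E_h$ and the blocks of $\mbox{Kr}(p)$ listed as $F_1 \sqsubset \cdots \sqsubset F_l$, where $h+l = m+1$ by \eqref{nbblocks}. Applying the formula for the $i$-th coordinate of a product of $m+1$ elements of $\mathcal{C}_k$ recorded in Section 2.2, I obtain
$$\gamma_m^{(i)} = \sum_{p\in NC^{(A)}(m)}\ \sum_{\lambda \in \Lambda_{m+1,i}} C_i^{\lambda_1,\ldots ,\lambda_{m+1}} \prod_{j=1}^{h}\alpha_{|E_j|}^{(\lambda_j)}\prod_{j=1}^{l}\beta_{|F_j|}^{(\lambda_{h+j})}.$$
Unpacking the $\mbox{Sep}$ notation, $E_j = \mbox{Sep}(p,j)$ for $1\leq j\leq h$ and $F_{j-h}=\mbox{Sep}(p,j)$ for $h+1\leq j\leq m+1$, so the inner summand only depends on $p$ and $\lambda$ and coincides up to the factor $C_i^{\lambda_1,\ldots,\lambda_{m+1}}$ with the product appearing in the definition of $\framebox[7pt]{$\star$}^{(k)}$.

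Next, I would invoke Proposition \ref{const}: for each pair $(p,\lambda)\in NC^{(A)}(m)\times \Lambda_{m+1,i}$, there are exactly $r(\lambda)$ non-crossing partitions $\pi\in NC^{(i)}(m)$ with $\mbox{Red}_m^{(i)}(\pi)=p$ and $\lambda_\pi = \lambda$. Since, as noted above, the summand depends on $\pi$ only through $(\mbox{Red}_m^{(i)}(\pi),\lambda_\pi)$, I can replace the double sum over $(p,\lambda)$ by the sum over $\pi \in NC^{(i)}(m)$ weighted by $1/r(\lambda_\pi)$:
$$\gamma_m^{(i)} = \sum_{\pi \in NC^{(i)}(m)} \frac{C_i^{(\lambda_\pi)_1,\ldots ,(\lambda_\pi)_{m+1}}}{r(\lambda_\pi)} \prod_{j=1}^{|\mbox{Red}_m^{(i)}(\pi)|}\alpha_{|\mbox{Sep}(\mbox{Red}_m^{(i)}(\pi),j)|}^{((\lambda_\pi)_j)} \prod_{j=|\mbox{Red}_m^{(i)}(\pi)|+1}^{m+1} \beta_{|\mbox{Sep}(\mbox{Red}_m^{(i)}(\pi),j)|}^{((\lambda_\pi)_j)},$$
which is precisely the $i$-th coordinate of the $m$-th coefficient of $f\framebox[7pt]{$\star$}^{(k)}g$.

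The bulk of the argument is a bookkeeping verification; the only real content is the identification of the fibers of $\pi\mapsto(\mbox{Red}_m^{(i)}(\pi),\lambda_\pi)$, which is exactly what Proposition \ref{const} supplies. The main point to be careful about is matching the separate orderings $\sqsubset$ of the blocks of $p$ and those of $\mbox{Kr}(p)$ (as they appear in the $\framebox[7pt]{$\star$}_{\mathcal{C}_k}^{(A)}$ formula) with the concatenated $\mbox{Sep}$ ordering used in the $\framebox[7pt]{$\star$}^{(k)}$ formula, but this is immediate from the definition of $\mbox{Sep}$ given in Section~3.1.
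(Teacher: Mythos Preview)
Your proposal is correct and follows essentially the same approach as the paper's proof: both reduce the comparison to a double sum over $NC^{(A)}(m)\times\Lambda_{m+1,i}$ and use the fact (Proposition~\ref{const}, invoked in the paper via the proof of Theorem~\ref{cover}) that each pair $(p,\lambda)$ is hit by exactly $r(\lambda)$ partitions $\pi\in NC^{(i)}(m)$. The only cosmetic difference is the direction of the argument---the paper starts from the $\framebox[7pt]{$\star$}^{(k)}$ side and collapses to the $(p,\lambda)$-sum, whereas you start from the $\mathcal{C}_k$-expansion and lift to $NC^{(i)}(m)$---but this is the same identity read in the other direction.
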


\begin{proof}
Let $f(z)=\sum_{n=1}^\infty  (\alpha _n^{(0)},\ldots ,\alpha _n^{(k)})z^n$ 
and $g(z)=\sum_{n=1}^\infty  (\beta _n^{(0)},\ldots ,\beta _n^{(k)})z^n$ be in $\Theta ^{(k)}$.\\
Write $f$ \framebox[7pt]{$\star$}$^{(k)} g=\sum_{n=1}^\infty  (\gamma _n^{(0)},\ldots ,\gamma _n^{(k)})z^n$ and 
$f$ \framebox[7pt]{$\star$}$_{\mathcal{C}_k}^{(A)} g=\sum_{n=1}^\infty  (\delta _n^{(0)},\ldots ,\delta _n^{(k)})z^n$.
We fix a positive integer $n$, for which we will show that 
$$(\gamma _n^{(0)},\ldots ,\gamma _n^{(k)})=(\delta _n^{(0)},\ldots ,\delta _n^{(k)}).$$
Let us look at $\gamma _n^{(i)}$.
First, we have 
\begin{align}\nonumber \gamma _n^{(i)}=\sum_{\pi \in NC^{(i)}(n)} \ &
\frac{C_i^{(\lambda _\pi )_1,\ldots ,(\lambda _\pi )_{n+1}}}{r(\lambda _\pi )}
\prod_{j=1}^{ | \mbox{Red}_m^{(i)}(\pi ) | } 
\alpha _{\mbox{card}(\mbox{Sep}(\mbox{Red}_m^{(i)}(\pi ),j))}^{((\lambda _\pi )_j)}\cdot 
\\\nonumber \ &\prod_{j= | \mbox{Red}_m^{(i)}(\pi ) | +1}^{n+1}
\beta _{\mbox{card}(\mbox{Sep}(\mbox{Red}_m^{(i)}(\pi )),j)}^{((\lambda _\pi )_j)}.
\end{align}
For every $\pi \in NC^{(i)}(n)$, $1\leq j\leq n+1$ and $0\leq \lambda \leq k$, 
we put $p=\mbox{Red}_n^{(i)}(\pi )$ and
$$\theta ^{(\lambda )}(p,j):=\left\{\begin{array}{ccc} 
\alpha _{\mbox{card}(\mbox{Sep}(p,j))}^{(\lambda )}&if&j\leq | p | ,
\\\beta _{\mbox{card}(\mbox{Sep}(p,j))}^{(\lambda )}&if&j> | p | ,\\\end{array}\right.$$
The summation over $NC^{(i)}(n)$ can be reduced to one over $NC^{(A)}(n)$, 
by using the cover $\mbox{Red}_n^{(i)} : NC^{(i)}(n)\rightarrow NC^{(A)}(n)$.
When doing so, and taking into account the explicit description of 
$(\mbox{Red}_n^{(i)})^{-1}(p), p\in NC^{(A)}(n)$ provided by the proof of Theorem \ref{cover}, 
one gets $$\gamma _n^{(i)}=\sum_{p\in NC^{(A)}(n)} 
\sum_{\lambda \in \Lambda _{n+1,i}} C_i^{\lambda _1,\ldots ,\lambda _{n+1}} \prod_{j=1}^{n+1}\theta ^{(\lambda _j)}(p,j).$$
On the other hand, by recalling the definition of the operation 
\framebox[7pt]{$\star$}$_{\mathcal{C}_k}^{(A)}$, 
we see that $\delta _n^{(i)}$ equals $$\sum_{p\in NC^{(A)}(n)} \sum_{\lambda \in \Lambda _{n+1,i}} 
C_i^{\lambda _1,\ldots ,\lambda _{n+1}} \prod_{j=1}^{n+1}\theta ^{(\lambda _j)}(p,j).$$
By comparing, we obtain $(\gamma _n^{(0)},\ldots ,\gamma _n^{(k)})=(\delta _n^{(0)},\ldots ,\delta _n^{(k)})$, as desired.
\end{proof}

\begin{Cor} 
The operation \framebox[7pt]{$\star$}$^{(k)}$ is associative, 
commutative and the series $\Delta ^{(k)}(z)=\Delta _{\mathcal{C}_k}^{(A)}(z)$ is its unit element. 
A series $f\in \Theta ^{(k)}$ is invertible with respect to 
\framebox[7pt]{$\star$}$^{(k)}$ if and only if its coefficient of degree one has 
a non-zero first component.
\end{Cor}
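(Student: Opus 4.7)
The plan is to reduce everything to facts already established in the paper. By Theorem \ref{starboxthm}, the operation \framebox[7pt]{$\star$}$^{(k)}$ coincides with \framebox[7pt]{$\star$}$_{\mathcal{C}_k}^{(A)}$ under the identification of $\Theta^{(k)}$ with $\Theta_{\mathcal{C}_k}^{(A)}$. Hence it suffices to transport to our setting the corresponding properties of \framebox[7pt]{$\star$}$_{\mathcal{C}}^{(A)}$, which were recorded (with references to \cite{ns96}) just after Definition \ref{starbox}: for any unital commutative complex algebra $\mathcal{C}$, the operation \framebox[7pt]{$\star$}$_{\mathcal{C}}^{(A)}$ is associative and commutative, admits $\Delta_\mathcal{C}^{(A)}(z) = 1_\mathcal{C} z$ as unit element, and a series $f \in \Theta_\mathcal{C}^{(A)}$ is \framebox[7pt]{$\star$}$_{\mathcal{C}}^{(A)}$-invertible if and only if its coefficient of degree one is invertible in $\mathcal{C}$.

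Specializing to $\mathcal{C} = \mathcal{C}_k$, the associativity, commutativity, and the identification of the unit are immediate, since $\Delta^{(k)}(z)$ was defined to equal $\Delta_{\mathcal{C}_k}^{(A)}(z) = 1_{\mathcal{C}_k} z$. For the invertibility statement, I would then recall the characterization of invertible elements of $\mathcal{C}_k$ given in Subsection 2.2: an element $\alpha = (\alpha^{(0)},\ldots,\alpha^{(k)}) \in \mathcal{C}_k$ is invertible if and only if $\alpha^{(0)} \neq 0$. Combining this with the general invertibility criterion yields precisely that $f \in \Theta^{(k)}$ is \framebox[7pt]{$\star$}$^{(k)}$-invertible if and only if the first component of its degree-one coefficient is non-zero.

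There is essentially no obstacle: the argument is a one-line invocation of Theorem \ref{starboxthm} followed by two citations to material already in the paper. The only mild care needed is to be explicit that the ``invertibility of the degree-one coefficient in $\mathcal{C}$'' criterion for \framebox[7pt]{$\star$}$_{\mathcal{C}}^{(A)}$ really does hold in the general $\mathcal{C}$-coefficient setting; this is asserted by the paper as a straightforward adaptation of \cite{ns96}, and indeed the only place in the recursive inversion of a formal power series where one needs to divide is by the coefficient of $z$, so the adaptation is routine.
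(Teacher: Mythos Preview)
Your proposal is correct and matches the paper's approach exactly: the corollary is stated without proof in the paper, as it follows immediately from Theorem~\ref{starboxthm} together with the properties of \framebox[7pt]{$\star$}$_{\mathcal{C}}^{(A)}$ recorded after Definition~\ref{starbox} and the characterization of invertible elements of $\mathcal{C}_k$ from Subsection~2.2. There is nothing to add.
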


\begin{Rem} 
Theorem \ref{starboxthm} tells us that the operation \framebox[7pt]{$\star$}$^{(k)}$ 
is a boxed convolution of type A, for which it is noticed in Remark \ref{multi} that 
one may define a generalization to power series in several noncommuting indeterminates. 
This means that there exists an operation \framebox[7pt]{$\star$}$^{(k)}$ 
on power series in several noncommuting indeterminates. 
We do not find interesting to record here the formulas involved in this operation. 
\end{Rem}

Non-crossing partitions of type $k$ are thus the combinatorial objects describing 
the version of the boxed convolution of type A with scalars in the algebra $\mathcal{C}_k$. 

It is now easy to rewrite the main formulas involving infinitesimal non-crossing 
cumulants with sums indexed by the set of non-crossing partitions of type $k$. 
This is the content of the next proposition : 

\begin{Prop} 
Let $(\mathcal{A}, (\varphi ^{(i)})_{0\leq i\leq k})$ 
be an infinitesimal noncommutative probability space of order $k$.
The infinitesimal non-crossing cumulant functionals satisfy, 
for every $n\geq 1$, every $0\leq i\leq k$ and every $a_1,\ldots ,a_n\in \mathcal{A}$ : 
$$\varphi ^{(i)}(a_1\cdots  a_n) = \sum_{\pi \in NC_\ast ^{(i)}(n)} 
\frac{C_i^{(\lambda _\pi )_1,\ldots ,(\lambda _\pi )_{|\pi |}}}{r(\lambda _\pi )}
\kappa _{\mbox{Red}_n^{(i)}(\pi )}^{(\lambda _\pi )}(a_1,\ldots ,a_n).$$
\end{Prop}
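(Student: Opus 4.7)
The plan is to derive the formula by reorganizing the infinitesimal free moment-cumulant formula \eqref{inffmc} as a sum over $NC_\ast^{(i)}(n)$, using the covering map $\mbox{Red}_n^{(i)}:NC_\ast^{(i)}(n)\to NC^{(A)}(n)$ together with the counting result for the fibers with prescribed shape.

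First I would recall the formula \eqref{inffmc}: for every $n\geq 1$, every $0\leq i\leq k$ and every $a_1,\ldots ,a_n\in \mathcal{A}$,
$$\varphi ^{(i)}(a_1\cdots a_n)=\sum_{p\in NC^{(A)}(n)}\sum_{\lambda \in \Lambda _{|p|,i}} C_i^{\lambda _1,\ldots ,\lambda _{|p|}} \kappa _{p}^{(\lambda )}(a_1,\ldots ,a_n).$$
So the task reduces to rewriting the right-hand side of the claimed identity in precisely this form, which is a pure reparametrization of the indexing set.

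Next I would invoke the key combinatorial input stated in the remark immediately preceding the proposition: for any $p\in NC^{(A)}(n)$ and any $\lambda \in \Lambda _{|p|,i}$, the number of $\pi \in NC_\ast ^{(i)}(n)$ with $\mbox{Red}_n^{(i)}(\pi )=p$ and (effective) shape $\lambda _\pi =\lambda $ equals $r(\lambda )$. This uses the definition of $NC_\ast ^{(i)}(n)$ (the Kreweras coordinates of $\lambda _\pi $ vanish, so the shape is naturally an element of $\Lambda _{|p|,i}$) together with Proposition \ref{const}. Observe also that for such $\pi $, the factor $\kappa _{\mbox{Red}_n^{(i)}(\pi )}^{(\lambda _\pi )}(a_1,\ldots ,a_n)$ depends on $\pi $ only through the pair $(p,\lambda )$, since the product $\prod _{V\in p}\kappa _{|V|}^{(\lambda _V)}(\cdots )$ is determined by $p$ and the restriction of the shape vector to the blocks of $p$.

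With these observations in hand, grouping the sum in the right-hand side of the claimed formula by $(p,\lambda )=(\mbox{Red}_n^{(i)}(\pi ),\lambda _\pi )$ yields
\begin{align*}
\sum_{\pi \in NC_\ast ^{(i)}(n)} &\frac{C_i^{(\lambda _\pi )_1,\ldots ,(\lambda _\pi )_{|\pi |}}}{r(\lambda _\pi )}\kappa _{\mbox{Red}_n^{(i)}(\pi )}^{(\lambda _\pi )}(a_1,\ldots ,a_n) \\
&=\sum_{p\in NC^{(A)}(n)}\sum_{\lambda \in \Lambda _{|p|,i}} r(\lambda )\cdot \frac{C_i^{\lambda _1,\ldots ,\lambda _{|p|}}}{r(\lambda )}\kappa _{p}^{(\lambda )}(a_1,\ldots ,a_n) \\
&=\sum_{p\in NC^{(A)}(n)}\sum_{\lambda \in \Lambda _{|p|,i}} C_i^{\lambda _1,\ldots ,\lambda _{|p|}}\kappa _{p}^{(\lambda )}(a_1,\ldots ,a_n),
\end{align*}
which equals $\varphi ^{(i)}(a_1\cdots a_n)$ by \eqref{inffmc}.

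There is no genuine obstacle here: the proof is a bookkeeping argument. The only point that deserves a line of care is the identification of the shape $\lambda _\pi \in \Lambda _{n+1,i}$ of $\pi \in NC_\ast ^{(i)}(n)$ with an element of $\Lambda _{|p|,i}$, which is legitimate precisely because the coordinates of $\lambda _\pi $ indexed by the blocks of $\mbox{Kr}(\pi )$ vanish by definition of $NC_\ast ^{(i)}(n)$, so that the multinomial coefficient and the cumulant product only see the $|p|$ coordinates indexed by the blocks of $p$.
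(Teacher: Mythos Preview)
Your proof is correct and follows exactly the approach the paper intends. The paper does not give an explicit proof of this proposition; it simply says that ``it is now easy to rewrite the main formulas involving infinitesimal non-crossing cumulants with sums indexed by the set of non-crossing partitions of type $k$,'' relying on the same reparametrization technique used in the proof of Theorem~\ref{starboxthm} (grouping by reduction and shape via the cover $\mathrm{Red}_n^{(i)}$ together with the fiber count $r(\lambda)$ from Proposition~\ref{const} and the remark on $NC_\ast^{(i)}(n)$), which is precisely the bookkeeping you carried out.
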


\begin{Prop} 
Let $(\mathcal{A}, (\varphi ^{(i)})_{0\leq i\leq k})$ be an 
infinitesimal noncommutative probability space of order $k$. 
Consider subsets $\mathcal{M}_1,\mathcal{M}_2$ of $\mathcal{A}$ 
that are infinitesimally free of order $k$. 
Then, one has, for each $n\geq 1$, 
each $n$-tuples $(a_1,\ldots ,a_n)\in \mathcal{M}_1^n,(b_1,\ldots ,b_n)\in \mathcal{M}_2^n$ 
and each $0\leq i\leq k$ :  
$$\kappa _n^{(i)}(a_1\cdot b_1,\ldots ,a_n\cdot b_n)=\sum_{\pi \in NC^{(i)}(n)}
\frac{C_i^{(\lambda _\pi )_1,\ldots ,(\lambda _\pi )_{n+1}}}{r(\lambda _\pi )}
\kappa _{\mbox{Red}_n^{(i)}(\pi \cup \mbox{Kr}(\pi ))}^{(\lambda _\pi )}(a_1,b_1,\ldots ,a_n,b_n).$$
\end{Prop}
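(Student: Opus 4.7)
The plan is to work in the associated scarce $\mathcal{C}_k$-noncommutative probability space and extract the coefficient of $\varepsilon^i/i!$ in the canonical basis of $\mathcal{C}_k$. Since $\mathcal{M}_1,\mathcal{M}_2$ are infinitesimally free of order $k$ in $(\mathcal{A},\tilde\varphi)$ as well, formula \eqref{altprod} from the previous proposition gives
$$\tilde \kappa _n(a_1 b_1,\ldots ,a_n b_n)=\sum_{p\in NC^{(A)}(n)} \tilde \kappa _p(a_1,\ldots ,a_n)\cdot \tilde \kappa _{\mbox{Kr}(p)}(b_1,\ldots ,b_n),$$
and by \eqref{decompcum} the coefficient of $\varepsilon^i/i!$ on the left is precisely $\kappa_n^{(i)}(a_1 b_1,\ldots ,a_n b_n)$.

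On the right, fix $p\in NC^{(A)}(n)$ with blocks $V_1\sqsubset \cdots \sqsubset V_h$ and $\mbox{Kr}(p)$ with blocks $W_1\sqsubset \cdots \sqsubset W_l$, so $h+l=n+1$ by \eqref{nbblocks}. Applying \eqref{decompcum} to each factor $\tilde\kappa_{|V_j|}$ and $\tilde\kappa_{|W_j|}$ and using the Leibniz-type product rule in $\mathcal{C}_k$ from Section 2.2, the coefficient of $\varepsilon^i/i!$ in $\tilde\kappa_p(a_1,\ldots,a_n)\cdot\tilde\kappa_{\mbox{Kr}(p)}(b_1,\ldots,b_n)$ equals
$$\sum_{\lambda \in \Lambda_{n+1,i}} C_i^{\lambda_1,\ldots,\lambda_{n+1}}\,\Theta(p,\lambda),$$
where
$$\Theta(p,\lambda):=\prod_{j=1}^{h} \kappa _{|V_j|}^{(\lambda _j)}((a_1,\ldots,a_n)\mid V_j)\,\prod_{j=1}^{l} \kappa _{|W_j|}^{(\lambda_{h+j})}((b_1,\ldots,b_n)\mid W_j).$$
Summing over $p$ produces
$$\kappa_n^{(i)}(a_1 b_1,\ldots,a_n b_n)=\sum_{p\in NC^{(A)}(n)}\sum_{\lambda \in \Lambda_{n+1,i}} C_i^{\lambda_1,\ldots,\lambda_{n+1}}\,\Theta(p,\lambda).$$

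It remains to reindex this double sum via the cover $\mbox{Red}_n^{(i)}:NC^{(i)}(n)\to NC^{(A)}(n)$ of Theorem \ref{cover}. By Propositions \ref{charac} and \ref{comm}, for $\pi\in NC^{(i)}(n)$ with reduction $p$ the partition $\mbox{Red}_n^{(i)}(\pi\cup\mbox{Kr}(\pi))$ equals $p\cup \mbox{Kr}(p)$, whose $\sqsubset $-ordered list of blocks is $V_1,\ldots ,V_h,W_1,\ldots ,W_l$. By Proposition \ref{const}, for each $p\in NC^{(A)}(n)$ and each $\lambda\in \Lambda_{n+1,i}$ there are exactly $r(\lambda)$ preimages $\pi\in NC^{(i)}(n)$ of $p$ with shape $\lambda_\pi=\lambda$. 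Multiplying and dividing by $r(\lambda_\pi)$ therefore rewrites the double sum as
$$\sum_{\pi\in NC^{(i)}(n)}\frac{C_i^{(\lambda_\pi)_1,\ldots,(\lambda_\pi)_{n+1}}}{r(\lambda_\pi)}\,\kappa _{\mbox{Red}_n^{(i)}(\pi \cup \mbox{Kr}(\pi))}^{(\lambda_\pi)}(a_1,b_1,\ldots,a_n,b_n),$$
which is the claimed formula. The only real obstacle is notational bookkeeping: one must check that the coordinate ordering of $\lambda_\pi$ (indexed by blocks of $\mbox{Red}_n^{(i)}(\pi\cup\mbox{Kr}(\pi))$ in the order $\sqsubset $) matches exactly the ordering of the exponents produced by the Leibniz expansion of $\tilde\kappa_p\cdot\tilde\kappa_{\mbox{Kr}(p)}$, but this is identical to the matching carried out in the proof of Theorem \ref{starboxthm}, so no essentially new argument is required.
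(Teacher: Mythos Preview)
Your proof is correct and follows exactly the route the paper intends: the paper states this proposition without proof, remarking only that ``it is now easy to rewrite the main formulas\ldots'' using the machinery of Theorem~\ref{starboxthm}, and your argument (pass to $(\mathcal{A},\tilde\varphi)$, apply \eqref{altprod}, extract the $\varepsilon^i/i!$-coefficient via the Leibniz rule in $\mathcal{C}_k$, then reindex over $NC^{(i)}(n)$ using Proposition~\ref{const}) is precisely that machinery spelled out. One small slip: the $\sqsubset$-ordered list of blocks of $p\cup\mbox{Kr}(p)$ is \emph{not} $V_1,\ldots,V_h,W_1,\ldots,W_l$ (that is the Sep order), but the Mix order that interleaves them --- you effectively acknowledge and repair this in your last paragraph, and since both $C_i^{\lambda_1,\ldots,\lambda_{n+1}}$ and the summation over $\Lambda_{n+1,i}$ are permutation-invariant, the reindexing goes through unchanged.
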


We move to the main application of infinitesimal freeness. 

\begin{center} 
{\bf\large 8. Application to derivatives of the free convolution}
\end{center}

In this final section, we give an application of 
infinitesimal freeness of order $k$. We consider the situation already 
examined in \cite{bs} : let 
$\{a_u^v(t)\mid 1\leq v\leq m_u\}_{t\in K}$ 
be $s$ families of noncommutative random variables in a 
(usual) noncommutative probability space $(\mathcal{A},\varphi )$. 
These families are indexed by a subset $K$ of $\RR$ 
having zero as an accumulation point, and we are 
interested in the joint distribution $\mu _t$ 
of $\{a_u^v(t)\mid 1\leq v\leq m_u, 1\leq u\leq s\}$
when $t$ is going to $0$, in other words for infinitesimal values of $t$. 
Recall that $\mu _t$ is the linear functional on 
$\CC \langle X_u^v , 1\leq v\leq m_u , 1\leq u\leq s \rangle$ 
defined by : 
$$\mu _t(P((X_u^v)_{1\leq v\leq m_u , 1\leq u\leq s}))=
\varphi (P((a_u^v(t))_{1\leq v\leq m_u , 1\leq u\leq s})).$$
In what follows, we will consider a family $\{\mu _t\}_{t\in K}$ 
of linear functionals on $\CC \langle X_u^v , 1\leq v\leq m_u , 1\leq u\leq s \rangle$
without any further reference to the variables 
$\{a_u^v(t)\mid 1\leq v\leq m_u, 1\leq u\leq s\}_{t\in K}$.
For each value of $t\in K$, one may obviously define the non-crossing 
cumulant functionals $((\kappa _t)_n: 
(\CC \langle X_u^v , 1\leq v\leq m_u , 1\leq u\leq s \rangle )^n \rightarrow \CC)_{n=1}^\infty $ 
associated to the noncommutative 
probability space $(\CC \langle X_u^v , 1\leq v\leq m_u , 1\leq u\leq s \rangle , \mu _t)$. 
A way to capture the behavior of $\mu _t$ for infinitesimal values of $t$ 
is to introduce recursively its derivatives at $0$ by : 
\begin{equation} \label{zeroderiv}
\mu ^{(0)}:=\lim_{t\rightarrow 0} \mu _t,
\end{equation}
\begin{equation} \label{ithderiv}
\frac{\mu ^{(i)}}{i!}:=\lim_{t\rightarrow 0} \frac{1}{t^i}(\mu _t-\sum_{j=0}^{i-1} \frac{t^j}{j!}\mu ^{(j)}),1\leq i\leq k.
\end{equation}
We will assume that the limits in formulas \eqref{zeroderiv} and \eqref{ithderiv} exist 
and use the notation $\mu ^{(i)}=\frac{d^i}{dt^i}_{|t=0}\mu _t$. 
Notice that, in \cite{bs}, only $\mu ^{(0)}$ and $\frac{d}{dt}_{|t=0}\mu _t$ were studied. 
It follows from formulas \eqref{zeroderiv} and \eqref{ithderiv} that 
$$\mu _t=\sum_{i=0}^k \frac{\mu ^{(i)}}{i!} t^i+o(t^k).$$
Notice that $(\mu ^{(i)})_{0\leq i\leq k}$ is an infinitesimal law 
(of order $k$) on $\sum_{u=1}^s m_u$ variables and therefore 
$(\CC \langle X_u^v , 1\leq v\leq m_u , 1\leq u\leq s \rangle , (\mu ^{(i)})_{0\leq i\leq k})$ 
is an infinitesimal noncommutative probability space of order $k$. 
Associated to this infinitesimal noncommutative probability space of order $k$, 
we have infinitesimal non-crossing cumulant functionals 
$(\kappa _n^{(i)}: \mathcal{A}^n \rightarrow \CC , 0\leq i\leq k)_{n=1}^\infty $, 
as defined by formula \eqref{inffmc}. 
These infinitesimal cumulant functionals are linked to $((\kappa _t)_n)_{n=1}^\infty $ 
as follows : 

\begin{Prop} \label{propcumtime}
For every $n\geq 1$ and $0\leq i\leq k$, 
$$\kappa _n^{(i)}=\frac{d^i}{dt^i}_{|t=0}(\kappa _t)_n.$$
\end{Prop}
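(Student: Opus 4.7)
The plan is to compare the two sides directly via the inverse free moment–cumulant formula, using Leibniz's rule to handle the differentiation of products over blocks. No induction on $n$ is needed.

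First, for each fixed $t \in K$, the ordinary (complex-valued) cumulants $(\kappa_t)_n$ are related to $\mu_t$ by the inverse free moment–cumulant formula in the noncommutative probability space $(\CC\langle X_u^v \rangle,\mu_t)$: for every $n\ge 1$ and every $a_1,\ldots,a_n$,
\begin{equation*}
(\kappa_t)_n(a_1,\ldots,a_n) \;=\; \sum_{p\in NC^{(A)}(n)} \mbox{M\"ob}(p,1_n)\,(\mu_t)_p(a_1,\ldots,a_n),
\end{equation*}
where $(\mu_t)_p(a_1,\ldots,a_n)=\prod_{V\in p}\mu_t\bigl((a_1,\ldots,a_n)\mid V\bigr)$. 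Since for each block $V$ the function $t\mapsto \mu_t(a_V)$ is (by hypothesis) $k$-times differentiable at $0$ with $\frac{d^j}{dt^j}_{|t=0}\mu_t(a_V)=\mu^{(j)}(a_V)$, the product over blocks is $k$-times differentiable at $0$ as well.

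The key step is to differentiate $i$ times at $t=0$ and apply the generalized Leibniz rule to each product $\prod_{V\in p}\mu_t(a_V)$: writing $p=\{V_1\sqsubset\cdots\sqsubset V_h\}$, one has
\begin{equation*}
\frac{d^i}{dt^i}_{\big|t=0}(\mu_t)_p(a_1,\ldots,a_n)
\;=\; \sum_{\lambda\in\Lambda_{h,i}} C_i^{\lambda_1,\ldots,\lambda_h}\prod_{j=1}^{h} \mu^{(\lambda_j)}(a_{V_j})
\;=\; \sum_{\lambda\in\Lambda_{h,i}} C_i^{\lambda_1,\ldots,\lambda_h}\,\varphi_p^{(\lambda)}(a_1,\ldots,a_n),
\end{equation*}
using exactly the notation of the excerpt for $\varphi_p^{(\lambda)}$.

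Substituting this into the differentiated inverse formula yields
\begin{equation*}
\frac{d^i}{dt^i}_{\big|t=0}(\kappa_t)_n(a_1,\ldots,a_n)
\;=\; \sum_{\substack{p\in NC^{(A)}(n)\\ p=\{V_1,\ldots,V_h\}}} \sum_{\lambda\in\Lambda_{h,i}} \mbox{M\"ob}(p,1_n)\,C_i^{\lambda_1,\ldots,\lambda_h}\,\varphi_p^{(\lambda)}(a_1,\ldots,a_n),
\end{equation*}
which is exactly the inverse infinitesimal free moment–cumulant formula \eqref{infinvfmc} defining $\kappa_n^{(i)}(a_1,\ldots,a_n)$. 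Hence the two coincide.

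The only thing to verify carefully is the interchange of differentiation with the finite sums over $NC^{(A)}(n)$ and with the finite products over blocks, which is immediate because $NC^{(A)}(n)$ is finite and each factor $\mu_t(a_V)$ is $k$-times differentiable at $0$ by assumption; this is the mildest of technicalities rather than a genuine obstacle.
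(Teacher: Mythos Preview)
Your proof is correct and follows essentially the same approach as the paper: both start from the inverse free moment--cumulant formula $(\kappa_t)_n=\sum_{p}\mbox{M\"ob}(p,1_n)(\mu_t)_p$, differentiate $i$ times at $0$ using linearity and the Leibniz rule on the product over blocks, and identify the result with the right-hand side of \eqref{infinvfmc}. Your extra remark about the legitimacy of interchanging differentiation with the finite sum and product is a welcome clarification but does not add any new ingredient.
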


\begin{proof}
By the inverse of the free moment-cumulant formula, one has 
\begin{equation} \label{timecum}
\forall t\in K, (\kappa _t)_n=\sum_{p\in NC^{(A)}(n)} \mbox{M\"ob}(p,1_n) (\mu _t)_p.
\end{equation}
By the assumption made above, the right-hand side of 
formula \eqref{timecum} has $k$ derivatives at $0$, 
hence $\frac{d^i}{dt^i}_{|t=0}(\kappa _t)_n$ is well-defined and, 
using linearity of derivation and Leibniz rule, 
one obtains : 
$$\frac{d^i}{dt^i}_{|t=0}(\kappa _t)_n=\sum_{\substack{p\in NC^{(A)}(n)\\p:=\{V_1,\ldots ,V_h \} }}
\sum_{\lambda \in \Lambda _{h,i}} \mbox{M\"ob}(p,1_n) C_i^{\lambda _1,\ldots ,\lambda _h} \mu _{p}^{(\lambda )}.$$
One recognizes in the right-hand side above the right-hand side of formula \eqref{infinvfmc}, 
and we are done.
\end{proof}

This proposition will be the main tool to characterize infinitesimal 
freeness of order $k$ in terms of moments in Theorem \ref{inffmomentsthm}. 
We first give a recipe to deduce the infinitesimal behaviour 
of the free convolution of two families of distributions from their 
individual infinitesimal behaviours. 

\begin{Prop} 
Let $\{\mu _t\}_{t\in K}$ (resp. $\{\nu _t\}_{t\in K}$) 
be a family of linear functionals on $\CC \langle X_u , 1\leq u\leq m \rangle$
(resp $\CC \langle Y_u , 1\leq u\leq m \rangle$) 
such that $\mu ^{(i)}=\frac{d^i}{dt^i}_{|t=0}\mu _t$ 
(resp. $\nu ^{(i)}=\frac{d^i}{dt^i}_{|t=0}\nu _t$) exist for $0\leq i\leq k$. 
Set : $$(\eta ^{(i)})_{0\leq i\leq k}:=(\mu ^{(i)})_{0\leq i\leq k} \boxplus ^{(k)}(\nu ^{(i)})_{0\leq i\leq k},$$
$$(\theta ^{(i)})_{0\leq i\leq k}:=(\mu ^{(i)})_{0\leq i\leq k} \boxtimes ^{(k)}(\nu ^{(i)})_{0\leq i\leq k}.$$
Then $\eta ^{(i)}=\frac {d^i}{dt^i}\mid _{t=0} \mu _t \boxplus \nu _t$ 
and $\theta ^{(i)}=\frac {d^i}{dt^i}\mid _{t=0} \mu _t \boxtimes \nu _t$.
\end{Prop}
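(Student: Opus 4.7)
The plan is to compare infinitesimal non-crossing cumulants on both sides of each equality and conclude by the bijective correspondence between infinitesimal laws and their cumulants provided by the free moment--cumulant formula \eqref{inffmc}. The bridge between the time-indexed and the infinitesimal pictures is Proposition \ref{propcumtime}, which identifies the $i$-th infinitesimal cumulant with the $i$-th time-derivative at $0$ of the ordinary type A cumulant. A preliminary remark is that the assumed differentiability of $\mu _t$ and $\nu _t$ up to order $k$ at $t=0$ propagates to $\mu _t\boxplus \nu _t$ and $\mu _t\boxtimes \nu _t$, because their moments are polynomial in the moments of $\mu _t$ and $\nu _t$ through free additivity of cumulants and the Kreweras product formula; in particular, the derivatives appearing in the conclusion do exist.

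For the additive case, I would realize $\mu _t$ and $\nu _t$ as the joint distributions, in some common $(\mathcal{A},\varphi _t)$, of freely independent families $(X_u)_u$ and $(Y_u)_u$, and set $Z_u:=X_u+Y_u$. Free additivity of type A cumulants reads, for every $t\in K$,
$$(\kappa _t)_n(Z_{u_1},\ldots ,Z_{u_n})=(\kappa _t)_n(X_{u_1},\ldots ,X_{u_n})+(\kappa _t)_n(Y_{u_1},\ldots ,Y_{u_n}).$$
Differentiating $i$ times at $0$ and applying Proposition \ref{propcumtime} term-by-term rewrites this as
$$\kappa _n^{(i),\tilde \eta }(Z_{u_1},\ldots )=\kappa _n^{(i),\mu }(X_{u_1},\ldots )+\kappa _n^{(i),\nu }(Y_{u_1},\ldots ),$$
where $\tilde \eta ^{(i)}:=\frac {d^i}{dt^i}|_{t=0}(\mu _t\boxplus \nu _t)$. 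Since the infinitesimal laws of $(X_u)_u$ and $(Y_u)_u$ are infinitesimally free of order $k$ in their infinitesimal free product, Proposition \ref{propinfadd} identifies the right-hand side with $\kappa _n^{(i),\eta }(Z_{u_1},\ldots )$. Equality of all infinitesimal cumulants then gives $\tilde \eta =\eta $ by \eqref{inffmc}.

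For the multiplicative case, I would run the identical argument starting from the scalar Kreweras product formula, the $\CC$-valued analogue of \eqref{altprod}:
$$(\kappa _t)_n(X_{u_1}Y_{u_1},\ldots ,X_{u_n}Y_{u_n})=\sum _{p\in NC^{(A)}(n)}(\kappa _t)_p(X_{u_1},\ldots )\cdot (\kappa _t)_{\mbox{Kr}(p)}(Y_{u_1},\ldots ).$$
Differentiating $i$ times at $0$, applying the Leibniz rule to each summand, and invoking Proposition \ref{propcumtime} on each factor yields an identity of infinitesimal cumulants that must be recognized as the infinitesimal multiplicative cumulant formula of Section~7 characterizing $\theta $. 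The hard part is precisely this combinatorial matching: the Leibniz expansion distributes the $i$ derivatives among the $|p|+|\mbox{Kr}(p)|=n+1$ cumulant factors with multinomial weights $C_i^{\lambda _1,\ldots ,\lambda _{n+1}}$, and one must recognize this as the sum over $NC^{(k)}(n)$ with its shape decomposition, the factor $1/r(\lambda _\pi )$ precisely accounting for the fibers of $\mbox{Red}_n^{(i)}$. The cleanest way to sidestep this bookkeeping is to consolidate everything into the scarce $\mathcal{C}_k$-functional $\tilde \varphi =\sum _i\varphi ^{(i)}\frac {\varepsilon ^i}{i!}$: the product \eqref{prod} in $\mathcal{C}_k$ is literally the Leibniz rule, so the derivation becomes the direct $\mathcal{C}_k$-valued transcription of the scalar Kreweras formula, and equality of all $\mathcal{C}_k$-valued cumulants forces $\tilde \theta =\theta $ via \eqref{fmc}.
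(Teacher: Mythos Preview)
Your argument is correct and rests on the same key ingredient as the paper, namely Proposition \ref{propcumtime} applied inside the free product $(\CC\langle X_u,Y_u\rangle,\mu_t\star\nu_t)$. The organization, however, differs. You treat the additive and multiplicative cases separately: for $\boxplus$ you differentiate the scalar additivity of cumulants, and for $\boxtimes$ you differentiate the scalar Kreweras formula and then invoke the $\mathcal{C}_k$-valued Leibniz identification to match with \eqref{altprod}. The paper instead makes a single observation that handles both cases at once: in the free product, every \emph{mixed} cumulant $(\kappa_t)_n(P_1,\ldots,P_l)$ vanishes identically in $t$, so by Proposition \ref{propcumtime} all mixed infinitesimal cumulants $\kappa_n^{(i)}$ vanish in the derived infinitesimal space $(\CC\langle X_u,Y_u\rangle,(\frac{d^i}{dt^i}|_{t=0}(\mu_t\star\nu_t))_i)$. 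This is precisely Definition \ref{inff}, so $\CC\langle X_u\rangle$ and $\CC\langle Y_u\rangle$ are infinitesimally free of order $k$ there, and the conclusions for both $\boxplus^{(k)}$ and $\boxtimes^{(k)}$ follow immediately from their definitions. Your route is more explicit at the cumulant level; the paper's is shorter and avoids the Leibniz/$\mathcal{C}_k$ bookkeeping entirely.
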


\begin{proof}
For each $t\in K$, we consider the free product 
$$(\CC \langle X_u, Y_u , 1\leq u\leq m \rangle,\mu _t\star \nu _t).$$
Since $\frac{d^i}{dt^i}_{|t=0}\mu _t$ and $\frac{d^i}{dt^i}_{|t=0}\nu _t$ 
exist by assumption for each $0\leq i\leq k$, 
we obtain the existence of $\frac{d^i}{dt^i}_{|t=0}(\mu _t\star \nu _t)$ 
for each $0\leq i\leq k$ and these functionals 
are completely determined by the $\mu ^{(i)}$'s and the $\nu ^{(i)}$'s. 
In the infinitesimal noncommutative probability space 
$(\CC \langle X_u, Y_u , 1\leq u\leq m \rangle ,
(\frac{d^i}{dt^i}_{|t=0}(\mu _t\star \nu _t))_{0\leq i\leq k})$, 
the unital subalgebras $\mathcal{A}_1=\CC \langle X_u , 1\leq u\leq m \rangle$ 
and $\mathcal{A}_2=\CC \langle Y_u , 1\leq u\leq m \rangle$ 
are infinitesimally free of order $k$ : 
indeed, if $n\geq 1 , 0\leq i\leq k$ and $P_1\in \mathcal{A}_{i_1},\ldots ,P_n\in \mathcal{A}_{i_n}$ 
are such that $i_1,\ldots ,i_n$ are not all equal, 
then $$\kappa _n^{(i)}(P_1,\ldots ,P_l)=\frac{d^i}{dt^i}_{|t=0}(\kappa _t)_n(P_1,\ldots ,P_l),$$
where $(\kappa _t)_n$ is the $n$-th non-crossing cumulant functional 
in the noncommutative probability space 
$(\CC \langle X_u, Y_u , 1\leq u\leq m \rangle,\mu _t\star \nu _t)$, 
by Proposition \ref{propcumtime}. 
But it follows from the construction of the free product 
that $(\kappa _t)_n(P_1,\ldots ,P_l)=0$ for each $t\in K$. 
In particular $\kappa _n^{(i)}(P_1,\ldots ,P_l)=0$. 
The infinitesimal distribution of the $m$-tuple 
$(X_1+Y_1,\ldots ,X_m+Y_m)$ (resp. $(X_1\cdot Y_1,\ldots ,X_m\cdot Y_m)$)
is, on the one hand $(\frac{d^i}{dt^i}_{|t=0}(\mu _t\boxplus \nu _t))_{0\leq i\leq k}$
(resp. $(\frac{d^i}{dt^i}_{|t=0}(\mu _t\boxtimes \nu _t))_{0\leq i\leq k}$) 
by construction of the free product and, on the other hand, 
$(\eta ^{(i)})_{0\leq i\leq k}$ (resp. $(\theta ^{(i)})_{0\leq i\leq k}$) 
by the argument above. 
\end{proof}

We conclude by a characterization of infinitesimal freeness of order $k$ 
in terms of moments. Its formulation and proof rely on the 
Proposition \ref{propcumtime}. 

\begin{Theo} \label{inffmomentsthm}
Let $(\mathcal{A}, (\varphi ^{(i)})_{0\leq i\leq k})$ be an 
infinitesimal noncommutative probability space of order $k$, and 
$\mathcal{A}_1,\ldots ,\mathcal{A}_n$ be unital subalgebras of $\mathcal{A}$. 
Then $\mathcal{A}_1,\ldots ,\mathcal{A}_n$ are infinitesimally free 
of order $k$ if and only if for any positive integer $l\in \NN^*$, 
and any $a_1\in \mathcal{A}_{i_1},\ldots , 
a_l\in \mathcal{A}_{i_l}$, one has 
\begin{equation} \label{inffmomentstime}
\varphi _t((a_1-\varphi _t(a_1))\cdots  (a_l-\varphi _t(a_l)))=o(t^k),
\end{equation}
whenever $i_1\not=\ldots \not=i_l$, where $\varphi _t:=\sum_{i=0}^k \frac{\varphi ^{(i)}}{i!} t^i$. 
The condition \eqref{inffmomentstime} translates into $k+1$ requirements : 
\begin{equation} \label{inffmoments}
\forall i\in \{0,\ldots ,k\}, \sum_{j=0}^i \sum_{\lambda \in \Lambda _{l,i-j}} (-1)^{\#\{m, \lambda _m>0\}} 
\mu ^{(j)}(\hat \mu ^{(\lambda _1)}(P_1)\cdots  \hat \mu ^{(\lambda _{l})}(P_l))=0,
\end{equation}
where $\hat \mu ^{(\lambda )}(P):=P-\mu ^{(0)}(P)$ if $\lambda =0$, 
and $\hat \mu ^{(\lambda )}(P):=\mu ^{(\lambda )}(P)$ else.\\
\end{Theo}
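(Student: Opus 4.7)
The plan is to realize the infinitesimal structure as the Taylor expansion at $t=0$ of the one-parameter family of classical noncommutative probability spaces $(\mathcal{A},\varphi_t)$, and then apply Speicher's classical characterization of freeness by vanishing of mixed cumulants pointwise in $t$, keeping track of the order of vanishing. First, $\varphi_t(1_\mathcal{A})=\varphi^{(0)}(1_\mathcal{A})=1$, so $(\mathcal{A},\varphi_t)$ is a noncommutative probability space; its classical non-crossing cumulants $((\kappa_t)_n)_{n\geq 1}$ are polynomial in $t$ (by \eqref{invfmc}, since $\varphi_t$ is), and Proposition \ref{propcumtime} gives $\kappa_n^{(i)}=\frac{d^i}{dt^i}|_{t=0}(\kappa_t)_n$. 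Taylor expansion
\begin{equation*}
(\kappa_t)_n(a_1,\ldots,a_n)=\sum_{i=0}^k \frac{t^i}{i!}\kappa_n^{(i)}(a_1,\ldots,a_n)+O(t^{k+1})
\end{equation*}
then shows that infinitesimal freeness of order $k$ (vanishing of $\kappa_n^{(i)}$ for $0\leq i\leq k$ on mixed arguments) is equivalent to $(\kappa_t)_n(a_1,\ldots,a_n)=o(t^k)$ for every $n\geq 1$ and every mixed $a_j\in\mathcal{A}_{i_j}$.

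The remaining task is to show that this ``approximate vanishing of mixed cumulants at time $t$'' is equivalent to condition \eqref{inffmomentstime}, a quantitative version of Speicher's theorem. For the cumulants-to-moments direction, I would expand the centered alternating moment by the classical moment-cumulant formula at time $t$: Proposition \ref{cwsae} eliminates every partition containing a centered singleton, and under the hypothesis $i_1\neq\cdots\neq i_l$ each surviving non-crossing partition must contain an interval block of size $\geq 2$ spanning two distinct subalgebras (since consecutive indices differ). Hence each surviving summand contains at least one mixed-cumulant factor that is $o(t^k)$ by assumption, while the other factors are polynomial in $t$ and bounded near $0$, making the whole sum $o(t^k)$. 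For the converse I would use a time-dependent centering trick combined with induction on $l$: for $l\geq 2$, multilinearity together with Proposition \ref{cwsae} yields the identity
\begin{equation*}
(\kappa_t)_l(a_1,\ldots,a_l)=\varphi_t(\hat{a}_1(t)\cdots\hat{a}_l(t))-\sum_{\substack{p\in NC^{(A)}(l)\\ p\neq 1_l}}(\kappa_t)_p(\hat{a}_1(t),\ldots,\hat{a}_l(t)),
\end{equation*}
where $\hat{a}_j(t):=a_j-\varphi_t(a_j)$; this exhibits $(\kappa_t)_l(a_1,\ldots,a_l)$ as the centered moment (assumed $o(t^k)$) minus a sum whose terms, in the alternating case, each contain a strictly smaller mixed cumulant that is $o(t^k)$ by the inductive hypothesis. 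The general non-alternating mixed case is then reduced to the alternating one via Proposition \ref{cwpae} by grouping consecutive arguments from the same subalgebra.

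Finally, expanding $\varphi_t=\sum_{j=0}^k\frac{\varphi^{(j)}}{j!}t^j$ and each $\hat{a}_m(t)=a_m-\varphi_t(a_m)$ as polynomials in $t$, multiplying out, and identifying the coefficient of $t^i$ for each $0\leq i\leq k$ translates the single condition \eqref{inffmomentstime} into the $k+1$ explicit requirements \eqref{inffmoments}. The main obstacle I foresee lies in the converse direction above: the inductive centering-trick argument must preserve the $o(t^k)$ bound uniformly through each reduction step, which I expect to work because the moment-cumulant relations have integer coefficients independent of $t$ and involve only finitely many polynomial-in-$t$ factors, so the $o(t^k)$ estimate is stable under the finitely many algebraic manipulations required.
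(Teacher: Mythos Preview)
Your proposal is correct and follows essentially the same route as the paper's proof: both pass through Proposition~\ref{propcumtime} to reformulate infinitesimal freeness as $(\kappa_t)_n=o(t^k)$ on mixed arguments, then prove the equivalence with~\eqref{inffmomentstime} by induction on $l$ using the moment--cumulant formula, the centering trick (your $\hat a_j(t)$ is exactly the paper's reduction to $\varphi_t(a_j)=0$ via Proposition~\ref{cwsae}), the interval-block argument, and Proposition~\ref{cwpae} for the non-alternating case. The only cosmetic difference is that the paper treats the moments-to-cumulants implication first and your write-up reverses the order; the substance is identical, and your worry about stability of the $o(t^k)$ estimate is correctly dispelled by the polynomial dependence on $t$.
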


\begin{proof}
We assume that condition \eqref{inffmomentstime} holds and have to prove that 
$\mathcal{A}_1,\ldots ,\mathcal{A}_n$ 
satisfy the vanishing of mixed infinitesimal cumulants condition. 
Using Proposition \ref{propcumtime}, it is equivalent to prove that 
for $l\geq 2$, and $a_1\in \mathcal{A}_{i_1},\ldots , 
a_l\in \mathcal{A}_{i_l}$
\begin{equation} \label{vmcumtime}
(\kappa _t)_l(a_1,\ldots ,a_l)=o(t^k)
\end{equation}
whenever $\exists r\not=s, i_r\not=i_s$, where $(\kappa _t)_l$ is the 
$l$-th non-crossing cumulant functional in $(\mathcal{A}, \varphi _t)$.\\
We proceed by induction on $l\geq 2$.\\
It is easy to see that 
\begin{equation} \label{fmc2}
(\kappa _t)_2(a_1,a_2)=\varphi _t((a_1-\varphi _t(a_1))((a_2-\varphi _t(a_2)).
\end{equation}
If $a_1\in \mathcal{A}_{i_1}, 
a_2\in \mathcal{A}_{i_2}$ 
with $i_1\not=i_2$, the right-hand side of \eqref{fmc2} is $o(t^k)$ by assumption. 
We assume then that the vanishing of mixed infinitesimal cumulants 
is proved for $2,3,\ldots ,l-1$ variables, and consider 
$(\kappa _t)_l(a_1,\ldots ,a_l)$ with 
$a_1\in \mathcal{A}_{i_1},\ldots , 
a_l\in \mathcal{A}_{i_l}$
such that $\exists r\not=s, i_r\not=i_s$. 
By Propositions \ref{cwsae}, \ref{cwpae} and the induction hypothesis, 
we may assume that $\varphi _t(a_1)=\ldots =\varphi _t(a_l)=0$ and $i_1\not=\ldots \not=i_l$. 
Write then the free moment-cumulant formula : 
$$\forall t\in K, (\varphi _t)(a_1\cdots a_l)-\sum_{\substack{p\in NC^{(A)}(l)\\p\not=1_l}} 
(\kappa _t)_p(a_1,\ldots ,a_l) = (\kappa _t)_l(a_1,\ldots ,a_l).$$
By assumption, $(\varphi _t)(a_1\cdots a_l)=o(t^k)$. 
Any non-crossing partition $p\not =1_l$ owns 
an interval-block $V_0$, as noticed in Section 3. 
If $V_0$ is a singleton, 
$$(\kappa _t)_p(a_1,\ldots ,a_l) = 
(\varphi _t)_{|V_0|}((a_1,\ldots ,a_l) \mid V_0) \prod_{V\not =V_0} (\kappa _t)_{|V|}((a_1,\ldots ,a_l) \mid V)=0.$$
Otherwise, $V_0$ contains two following, hence distinct, indices, 
and, by induction hypothesis, 
$$(\kappa _t)_{|V_0|}((a_1,\ldots ,a_l) \mid V_0)=o(t^k).$$
Since, for each $V\in \mbox{bl}(p)$, $(\kappa _t)_{|V|}((a_1,\ldots ,a_l) \mid V)$ 
is bounded in a neighborhood of $0$, one may affirm that 
$$(\kappa _t)_p(a_1,\ldots ,a_l) = o(t^k).$$
We conclude that 
$$(\kappa _t)_l(a_1,\ldots ,a_l)=o(t^k),$$
as required.\\
For the converse, we assume that the 
vanishing of mixed infinitesimal cumulants is satisfied, 
or equivalently that equation \eqref{vmcumtime} holds. 
We write then the free moment-cumulant formula : 
\begin{eqnarray} \label{fmcumtimebis} 
\forall t\in K, (\varphi _t)(a_1-\varphi _t(a_1)\cdots a_l-\varphi _t(a_l)) = 
\\\label{fmcumtimeter}
\sum_{p\in NC^{(A)}(l)} (\kappa _t)_p(a_1-\varphi _t(a_1),\ldots ,a_l-\varphi _t(a_l)).
\end{eqnarray}
If $a_1\in \mathcal{A}_{i_1},\ldots , 
a_l\in \mathcal{A}_{i_l}$ 
with $i_1\not=\ldots \not=i_n$, the same argument as above gives that 
\eqref{fmcumtimeter} is $o(t^k)$. 
This concludes the proof.
\end{proof}

{\bf Acknowledgements.}
The author would like to express all his gratitude to Serban Teodor Belinschi for suggesting this problem, 
and for many useful discussions. This work was initiated during a stay at University of Saskatchewan, 
that the author thanks for the warm welcome and the excellent work conditions provided. 
This work was partially supported by the {\emph Agence Nationale de la
Recherche} grant ANR-08-BLAN-0311-03.

\end{document}